\newtheorem{theorem}{Theorem}
\newtheorem*{remark}{Remark}
\newtheorem*{fact}{Fact}
\newtheorem*{definition}{Definition}
\newtheorem{proposition}[theorem]{Proposition}
\newtheorem{conjecture}{Conjecture}
\newtheorem*{question}{Question}
\newtheorem{lemma}[theorem]{Lemma}
\newtheorem{claim}{Claim}
\newtheorem{corollary}[theorem]{Corollary}
\DeclareMathOperator{\constant}{const}
\newcommand{\Trace}{\textrm{Tr}}
\newcommand{\const}{\textrm{const}}
\newcommand{\sgn}{\textrm{sgn}}
\newcommand{\Span}{\textrm{Span}}
\newcommand{\Ker}{\textrm{Ker}}
\newcommand{\domleq}{\unlhd}
\newcommand{\domgeq}{\unrhd}
\newcommand{\Cay}{\textrm{Cay}}
\newcommand{\round}[1]{\operatorname{round}(#1)}
\newcommand{\A}[0]{{\cal A}}
\begin{document}
\title{Setwise intersecting families of permutations}
\author{David Ellis}
\date{June 2011}
\maketitle

\begin{abstract}
A family of permutations \(\mathcal{A} \subset S_{n}\) is said to be {\em \(t\)-set-intersecting} if for any two permutations \(\sigma,\pi \in \mathcal{A}\), there exists a \(t\)-set \(x\) whose image is the same under both permutations, i.e. \(\sigma(x)=\pi(x)\). We prove that if \(n\) is sufficiently large depending on \(t\), the largest \(t\)-set-intersecting families of permutations in \(S_n\) are cosets of stabilizers of \(t\)-sets. The \(t=2\) case of this was conjectured by J\'anos K\"orner. It can be seen as a variant of the Deza-Frankl conjecture, proved in \cite{jointpaper}. Our proof uses similar techniques to those of \cite{jointpaper}, namely, eigenvalue methods, together with the representation theory of the symmetric group, but the combinatorial part of the proof is harder.
\end{abstract}

\section{Introduction}
Intersection problems are of wide interest in combinatorics. For example, we say that a family of \(r\)-element subsets of \(\{1,2,\ldots,n\}\) is {\em intersecting} if any two of its sets have nonempty intersection. The classical Erd\H os-Ko-Rado theorem states that if $r < n/2$, an intersecting family of
 $r$-element subsets of $\{1,2,\ldots,n\}$ has size at most ${n-1 \choose r-1}$;
 if equality holds, the family must consist of all $r$-subsets containing a fixed element. We consider intersection problems for families of permutations.

Let \(S_n\) denote the symmetric group, the group of all permutations of \(\{1,2,\ldots,n\}\) under composition. A family of permutations \(\mathcal{A} \subset S_{n}\) is said to be \textit{intersecting} if any two permutations in \(\mathcal{A}\) agree at some point, i.e. for any \(\sigma, \pi \in \mathcal{A}\), there is some \(i \in [n]\) such that \(\sigma(i)=\pi(i)\). It is natural to ask, how large can an intersecting family be? An example of a large intersecting family of permutations is \(\{\sigma \in S_n:\ \sigma(1)=1\}\), the collection of all permutations fixing 1. In 1977, Deza and Frankl \cite{dezafrankl} showed that if \(\mathcal{A} \subset S_{n}\) is intersecting, then \(|\mathcal{A}| \leq (n-1)!\), using a simple partitioning argument. They conjectured that equality holds only if \(\mathcal{A}\) is a coset of the stabilizer of a point. This was proved independently by Cameron and Ku \cite{cameron} and Larose and Malvenuto \cite{larose}; Wang and Zhang \cite{wang} have recently given a shorter proof.

Deza and Frankl considered a natural generalization of this question. A family of permutations \(\mathcal{A} \subset S_{n}\) is said to be \(t\)-\textit{intersecting} if any two permutations in \(\mathcal{A}\) agree on at least \(t\) points, i.e., \(|\{i \in [n]: \sigma(i)=\pi(i)\}| \geq t\) for any \(\sigma, \pi \in \mathcal{A}\). An example of a large \(t\)-intersecting family is
\[\{\sigma \in S_n:\ \sigma(i)=i\ \forall i \in [t]\};\]
this has size \((n-t)!\). Deza and Frankl \cite{dezafrankl} conjectured that for \(n\) sufficiently large depending on \(t\), if \(\mathcal{A} \subset S_{n}\) is \(t\)-intersecting, then \(|\mathcal{A}| \leq (n-t)!\), with equality only if \(\mathcal{A}\) is a coset of the stabilizer of \(t\) points. This became known as the Deza-Frankl conjecture. It was proved in 2008 by the author and independently by Friedgut and Pilpel, using the same methods: eigenvalue techniques, together with the representation theory of the symmetric group. We have written a joint paper, \cite{jointpaper}.

In this paper, we consider another natural generalization of the original question. A family of permutations \(\mathcal{A} \subset S_{n}\) is said to be {\em\(t\)-set-intersecting} if for any two permutations \(\sigma, \pi \in \mathcal{A}\), there is some \(t\)-set \(x\) which has the same image-set under both permutations, i.e. \(\sigma(x)=\pi(x)\). In other words, if we consider the permutation action of \(\mathcal{A}\) on \([n]^{(t)}\), then any two permutations in \(\mathcal{A}\) agree on some \(t\)-set. We prove the following.

\begin{theorem}
 \label{thm:main}
If \(n\) is sufficiently large depending on \(t\), and \(\mathcal{A} \subset S_n\) is \(t\)-set-intersecting, then \(|\mathcal{A}| \leq t!(n-t)!\). Equality holds only if \(\mathcal{A}\) is a coset of the stabilizer of a \(t\)-set.
\end{theorem}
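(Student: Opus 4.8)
\medskip
\noindent\textbf{Proof strategy.} The plan is to bound \(|\mathcal{A}|\) by Hoffman's eigenvalue inequality applied to a Cayley graph of \(S_n\), to determine the least eigenvalue using the representation theory of \(S_n\) (as in \cite{jointpaper}, with one extra combinatorial ingredient), and then to carry out a combinatorial stability argument for the equality case. Let \(T \subseteq S_n\) be the set of permutations fixing no \(t\)-set setwise. Since \(T\) is closed under conjugation and inversion, \(\Gamma := \Cay(S_n, T)\) is a normal Cayley graph, and \(\mathcal{A}\) is \(t\)-set-intersecting precisely when it is an independent set in \(\Gamma\). Its eigenvalues are indexed by the irreducible representations \(S^\lambda\) of \(S_n\): the eigenvalue on the \(S^\lambda\)-isotypic component of \(\mathbb{C}[S_n]\) is \(\eta_\lambda = \tfrac{1}{\dim S^\lambda}\sum_{\sigma \in T}\chi_\lambda(\sigma)\), with multiplicity \((\dim S^\lambda)^2\); write \(d = |T| = \eta_{(n)}\). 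The key spectral fact I would establish is that the least eigenvalue of \(\Gamma\) is \(-d/\big(\binom{n}{t}-1\big)\), attained exactly on the isotypic components of \(S^{(n-1,1)}, S^{(n-2,2)}, \dots, S^{(n-t,t)}\), with every other \(\eta_\lambda\) strictly larger once \(n\) is large. Granting this, Hoffman's bound gives \(|\mathcal{A}| \le n!\cdot \frac{d/(\binom{n}{t}-1)}{d + d/(\binom{n}{t}-1)} = n!/\binom{n}{t} = t!(n-t)!\).

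To pin down the eigenvalue, fix a \(t\)-set \(x\) and consider the functions \(f_{x \to w}(\sigma) := \mathbf{1}[\sigma(x) = w]\), with \(w\) ranging over all \(t\)-sets; these are the indicators of the cosets of stabilizers of \(t\)-sets, and they span a copy of the permutation module \(\mathbb{C}[n]^{(t)} \cong \bigoplus_{i=0}^{t} S^{(n-i,i)}\) inside \(\mathbb{C}[S_n]\). A direct computation gives \(A_\Gamma f_{x \to w} = \sum_{w'} M_{|w \cap w'|}\, f_{x \to w'}\), where \(M_s\) is the number of \(\sigma \in T\) carrying one fixed \(t\)-set to a fixed \(t\)-set meeting it in exactly \(s\) points; since \(\sigma(x) = x\) would make \(x\) an invariant \(t\)-set, \(M_t = 0\). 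Hence on this submodule \(A_\Gamma\) acts as a matrix in the Johnson scheme, and the crucial combinatorial identity is \(M_0 = M_1 = \cdots = M_{t-1}\): the number of permutations sending one \(t\)-set to a \emph{different} \(t\)-set while fixing no \(t\)-set does not depend on how much the two \(t\)-sets overlap. Granting this, the matrix equals \(M(J - I)\) with \(M := M_0\), so it has eigenvalue \(M\big(\binom{n}{t}-1\big) = d\) on the constants and \(-M = -d/\big(\binom{n}{t}-1\big)\) on each \(S^{(n-i,i)}\) with \(i \ge 1\), as claimed. I would prove \(M_0 = \cdots = M_{t-1}\) either by an explicit bijection or by inclusion--exclusion over the \(t\)-sets a permutation can fix, which reduces the count to permutations preserving a prescribed partition of \([n]\) and mapping \(x\) to \(w\). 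That every other \(\eta_\lambda\) exceeds \(-d/\big(\binom{n}{t}-1\big)\) for \(n\) large follows from the character-ratio estimates of \cite{jointpaper}, together with the fact that \(\dim S^\lambda\) grows at least like \(n^{t+1}\) for \(\lambda \notin \{(n), (n-1,1), \dots, (n-t,t)\}\).

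For the equality case, suppose \(|\mathcal{A}| = t!(n-t)!\). Hoffman's equality condition and the spectral fact force \(\mathbf{1}_{\mathcal{A}}\) to lie in \(U := \bigoplus_{i=0}^{t}\big(S^{(n-i,i)}\text{-isotypic component of } \mathbb{C}[S_n]\big)\), and one checks that \(U\) is precisely \(\Span\{f_{x \to w}\}\), the span of the indicators of cosets of stabilizers of \(t\)-sets. It remains to prove that a \(\{0,1\}\)-valued function in \(U\) whose support is a \(t\)-set-intersecting family of size \(t!(n-t)!\) must itself be one of the functions \(f_{x_0 \to w_0}\) --- equivalently, that \(\mathcal{A} = \{\sigma : \sigma(x_0) = w_0\}\) for some \(t\)-sets \(x_0, w_0\). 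Membership in \(U\) is a strong structural constraint: since \(f_{x \to w}(\sigma) = \binom{\,\sum_{i \in x,\, j \in w}\mathbf{1}[\sigma(i)=j]\,}{t}\), it says that \(\mathbf{1}_{\mathcal{A}}\) is a very restricted low-degree function of the variables \(\mathbf{1}[\sigma(i) = j]\).

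This last step is the main obstacle, and, as the abstract warns, it is harder than the corresponding argument in \cite{jointpaper}. The plan is to combine the low-degree structure with the \(t\)-set-intersecting hypothesis and an extremal counting argument to show that \(\mathcal{A}\) is ``governed'' by a single pair \((x_0, w_0)\) of \(t\)-sets, and then to bootstrap from ``most of \(\mathcal{A}\) looks like the coset \(\{\sigma : \sigma(x_0) = w_0\}\)'' to ``all of \(\mathcal{A}\) does''. The difficulty is that a single setwise constraint \(\sigma(x_0) = w_0\) is weaker than a pointwise constraint and interacts with the intersection hypothesis in a more intricate way; to begin the bootstrapping I expect to need a stability version of the eigenvalue bound --- roughly, that a \(t\)-set-intersecting family of size close to \(t!(n-t)!\) is close to a coset of the stabilizer of a \(t\)-set --- obtained by quantifying the Hoffman argument and controlling the mass of \(\mathbf{1}_{\mathcal{A}}\) outside \(U\). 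This is where I expect the bulk of the work to lie.
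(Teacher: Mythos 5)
Your framework (independent sets in a normal Cayley graph, eigenvalues via irreducible characters, a Hoffman-type bound, then an analysis of the equality case) is the right one, but the central spectral claim on which your upper bound rests is false for \(t\geq 2\). You assert that for the unweighted graph \(\Cay(S_n,\mathcal{D}_{(t)})\) the least eigenvalue is \(-d/\bigl(\binom{n}{t}-1\bigr)\), attained on \(S^{(n-1,1)},\dots,S^{(n-t,t)}\), via the ``crucial combinatorial identity'' \(M_0=\cdots=M_{t-1}\). That identity fails. Take \(t=2\): a permutation is a \(2\)-derangement iff it has at most one fixed point and no \(2\)-cycle, so \(d=a_n+na_{n-1}\) where \(a_m\sim m!/e^{3/2}\) counts permutations of \([m]\) with no fixed point and no \(2\)-cycle, and
\[
\eta_{(n-1,1)}=\frac{1}{n-1}\sum_{\tau\in\mathcal{D}_{(2)}}(\mathrm{fix}(\tau)-1)=\frac{na_{n-1}-d}{n-1}=\frac{-a_n}{n-1}\sim-\frac{n!}{e^{3/2}\,n},
\]
whereas your claimed value is \(-d/\bigl(\binom{n}{2}-1\bigr)\sim -4n!/(e^{3/2}n^2)\). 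Since the eigenvalue of the Johnson-scheme matrix \((M_{|w\cap w'|})\) on its \(S^{(n-1,1)}\)-component must equal \(\eta_{(n-1,1)}\), the \(M_s\) cannot all coincide. With the true value of \(\lambda_{\min}\), Hoffman's bound gives only \(\Theta((n-1)!)\), not \(t!(n-t)!\) --- exactly the obstruction the paper flags before abandoning the unweighted graph. (The same degeneration occurs in the Deza--Frankl setting, which is why \cite{jointpaper} also cannot use the plain graph.)

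What is actually needed, and what constitutes the bulk of the paper's argument, is a \emph{pseudoadjacency matrix}: a real (necessarily signed) linear combination \(\sum_\mu w_\mu A^\mu\) of adjacency matrices of Cayley graphs generated by conjugacy classes of \(t\)-derangements, engineered so that \(\lambda_{(n-s,s)}=-\lambda_{\const}/\bigl(\binom{n}{t}-1\bigr)\) for all \(s\in[t]\) while every other eigenvalue is \(O_t(\lambda_{\const}/n^{t+1})\). Constructing \(w\) requires solving a linear system governed by the top-left minor of the permutation character table, spreading the weight over large unions of conjugacy classes to keep \(\|w\|_\infty=O_t(1/n!)\) (which controls the medium partitions via a trace argument), and splitting \(w\) into even and odd parts to kill the eigenvalues at all tall partitions; none of this is present in your proposal. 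For the equality case your stability/bootstrapping plan is too vague to assess; the paper instead passes from \(1_{\mathcal{A}}\in U_t\) to ``\(\mathcal{A}\) is a disjoint union of \(t\)-cosets'' via the Benabbas--Friedgut--Pilpel theorem, and then closes with an explicit combinatorial construction of a long cycle in \(\pi\sigma^{-1}\) for suitable \(\sigma,\pi\) in two \(t\)-cosets not stabilizing the same \(t\)-set. You would need to supply substitutes for both of these steps.
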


The \(t=2\) case of this was conjectured by K\"orner \cite{korner}. Our proof uses similar techniques to that of the Deza-Frankl conjecture in \cite{jointpaper} --- namely, eigenvalue techniques, together with the representation theory of the symmetric group --- but the arguments are slightly harder than in \cite{jointpaper}.

Of course, a 1-set-intersecting family is simply an intersecting family, so the \(t=1\) case of Theorem \ref{thm:main} follows from the theorems on intesecting families mentioned above. We briefly recall Deza and Frankl's `partitioning' proof that an intersecting family \(\mathcal{A} \subset S_n\) has size at most \((n-1)!\). Take any \(n\)-cycle \(\rho\), and let \(H\) be the cyclic group of order \(n\) generated by \(\rho\). For any left coset \(\sigma H\) of \(H\), any two distinct permutations in \(\sigma H\) disagree at every point, and therefore \(\sigma H\) contains at most one member of \(\mathcal{A}\). Since the left cosets of \(H\) partition \(S_{n}\), it follows that \(|\mathcal{A}| \leq (n-1)!\).

What happens if we try to generalize this argument to prove the upper bound in Theorem \ref{thm:main}? If \(T \subset S_n\), we say that \(T\) is {\em \(t\)-set-transitive} if for any \(x,y \in [n]^{(t)}\), there exists \(\sigma \in T\) such that \(\sigma(x)=y\). It is said to be {\em sharply \(t\)-set-transitive} if for any \(x,y \in [n]^{(t)}\), there exists a {\em unique} \(\sigma \in T\) such that \(\sigma(x)=y\). Note that a \(t\)-set-transitive subset \(T \subset S_n\) is sharply \(t\)-set-transitive if and only if \(|T|={n \choose t}\).

When there exists a sharply \(t\)-set-transitive {\em subgroup} of \(S_n\), Deza and Frankl's partitioning method yields the upper bound in Theorem \ref{thm:main}. When there exists a sharply \(t\)-set-transitive {\em subset} of \(S_n\), the  partitioning method can be replaced by a Katona-type averaging argument. Indeed, suppose that $S_{n}$ has a sharply $t$-set-transitive subset $T$. Then any left translate $\sigma T$ of $T$ is also sharply $t$-set-transitive, so for any two distinct permutations \(\pi,\tau \in \sigma T\), \(\pi(x) \neq \tau(x)\) for all \(x \in [n]^{(t)}\). Let \(\mathcal{A} \subset S_n\) be \(t\)-set-intersecting; then $|\mathcal{A} \cap (\sigma T)| \leq 1$ for each \(\sigma \in S_n\). Averaging over all \(\sigma \in S_n\) gives $|\mathcal{A}| \leq t!(n-t)!$.

However, sharply \(t\)-set-transitive subgroups of \(S_n\) only exist for a few special values of \(t\) and \(n\). When \(t=2\) and \({n \choose 2}\) is even, for example, there exists no sharply \(2\)-set-transitive subgroup of \(S_n\). Indeed, if \(H \leq S_n\) with \(|H| = {n \choose 2}\), then \(H\) contains a permutation \(\tau\) of order 2, which fixes some \(2\)-set \(x\), so \(\tau(x) = x = \textrm{id}(x)\), and \(H\) is not sharply \(2\)-transitive. Moreoever, when \(t=2\) and \(n=4\), it is easy to see that a \(2\)-set-intersecting family in \(S_4\) has size at most 4, but there is no sharply \(2\)-set-transitive subset of \(S_4\).

On the other hand, when \(t=2\) and \(n=5\), there {\em are} sharply \(2\)-set-transitive subsets of \(S_5\), for example, the following set of permutations, written in sequence notation:
\begin{align*}
1\ 2\ 3\ 4\ 5,\quad 2\ 3\ 4\ 5\ 1,\quad 3\ 4\ 5\ 1\ 2,\quad 4\ 5\ 1\ 2\ 3,\quad 5\ 1\ 2\ 3\ 4,\\
1\ 3\ 5\ 2\ 4,\quad 2\ 4\ 1\ 3\ 5,\quad 3\ 5\ 2\ 4\ 1,\quad 4\ 1\ 3\ 5\ 2,\quad 5\ 2\ 4\ 1\ 3.
\end{align*}
(Note that the 5 permutations on the first line are the cyclic shifts of \(1\ 2\ 3\ 4\ 5\), and the 5 on the second line are the cyclic shifts of \(1\ 3\ 5\ 2\ 4\).) It follows that a \(2\)-set-intersecting family in \(S_5\) has size at most \(12\).

While it may be that sharply \(t\)-set-transitive subsets of \(S_n\) exist for \(n\) sufficiently large depending upon \(t\), we can see no reason for this to be true. Hence, we will not explore this approach further.

\subsection*{Definitions and notation}
We pause to recall some standard definitions and notation. If \(n \in \mathbb{N}\), \([n]\) denotes the set \(\{1,2,\ldots,n\}\). If \(t \in \mathbb{N}\), \([n]^{(t)}\) denotes the set of all \(t\)-element subsets of \([n]\). If \(S\) is a set, the {\em characteristic function} \(1_S\) of \(S\) is defined by
\[1_{S}(i) = \left\{\begin{array}{ll} 1 & \textrm{if } i \in S,\\
                    0 & \textrm{if } i \notin S. \end{array}\right.\]
If \(H = (V,E)\) is a graph, an {\em independent set} in \(H\) is a set of vertices of \(H\) with no edges of \(H\) between them. The {\em independence number} of \(H\) is the maximum size of an independent set in \(H\).

Recall that if \(H = (V,E)\) is an \(n\)-vertex graph, the {\em adjacency matrix} \(A\) of \(H\) is defined by
\[A_{v,w} = \left\{\begin{array}{ll} 1 & \textrm{if } vw \in E(H),\\
                    0 & \textrm{if } vw \notin E(H).
                   \end{array}\right.\]
This is a real, symmetric, \(n \times n\) matrix, so there exists an orthonormal system of \(n\) eigenvectors of \(A\), which forms a basis for \(\mathbb{R}^V\). (Note that the eigenvalues of \(A\) are often referred to as the eigenvalues of \(H\).)

If \(G\) is a finite group, and \(S \subset G\) is inverse-closed (\(S^{-1}=S\)), the {\em Cayley graph on \(G\) generated by \(S\)} is the graph with vertex-set \(G\), where we join \(g\) to \(gs\) for each \(g \in G\) and each \(s \in S\); it is sometimes denoted \(\Cay(G,S)\). In other words,
\[V(\Cay(G,S)) = G,\quad E(\Cay(G,S)) = \{\{g,gs\}:\ g \in G,\ s \in S\}.\]
We say that \(\Cay(G,S)\) is a {\em normal} Cayley graph if \(S\) is conjugation-invariant, i.e. \(S\) is a union of conjugacy classes of \(G\).

As usual, we compose permutations from right to left, i.e. if \(\sigma,\pi \in S_n\) and \(i \in [n]\), \((\sigma \pi) (i) = \sigma(\pi(i))\).

For \(x,y \in [n]^{(t)}\), we write
\[T_{x \mapsto y} = \{\sigma \in S_n:\ \sigma(x)=y\}\]
for the set of all permutations mapping \(x\) to \(y\); the \(T_{x\mapsto y}\)'s are precisely the cosets of stablizers of \(t\)-sets. We will sometimes refer to them as the `\(t\)-SSCs', standing for \(t\)-set-stabilizer-cosets. Our aim is to show that these are the maximum-sized \(t\)-set-intersecting families in \(S_n\), if \(n\) is sufficiently large depending on \(t\).

As in \cite{jointpaper}, if \(a\) and \(b\) are \(t\)-tuples of distinct numbers between 1 and \(n\), we define
\[T_{a \mapsto b} = \{\sigma \in S_n:\ \sigma(a_1)=b_1,\ \sigma(a_2)=b_2,\ldots, \sigma(a_t)=b_t\};\]
the \(T_{a \mapsto b}\)'s are called the {\em \(t\)-cosets of \(S_n\)}. Note that a \(t\)-SSC is a disjoint union of \(t!\) \(t\)-cosets.

We say that a permutation \(\sigma \in S_n\) is a {\em derangement} if it has no fixed point. Similarly, we say that \(\sigma \in S_n\) \(t\)-{\em derangment} if it fixes no \(t\)-set, i.e. \(\sigma(x) \neq x\) for all \(x \in [n]^{(t)}\).

If \(\sigma\in S_n\), \(\sgn(\sigma)\) will denote the {\em sign} of \(\sigma\), which is \(1\) if \(\sigma\) is a product of an even number of transpositions, and \(-1\) otherwise.

Finally, if \(f(n,t)\) is a function of both \(n \in \mathbb{N}\) and \(t \in \mathbb{N}\), and \(g\) is a function of \(n \in \mathbb{N}\), we will write \(f(n,t) = O_t(g)\) to mean that for each fixed \(t\), \(f(n,t) = O(g)\) --- i.e., for each \(t\), there exists \(K_t\) (depending upon \(t\) alone) such that \(f(n,t) \leq K_t g(n)\) for all \(n \in \mathbb{N}\).

\subsection*{A sketch of the proof}
We now give a sketch of our proof. First, we rephrase the problem in terms of finding the maximum-sized independent sets in a certain graph.

Let \(\Gamma_{(t)}\) denote the graph with vertex-set \(S_n\), where two permutations are joined if no \(t\)-set has the same image under both permutations. In other words,
\[V(\Gamma_{(t)}) = S_n,\quad E(\Gamma_{(t)})=\{\{\sigma, \pi\}:\ \sigma(x) \neq \pi(x)\ \forall x \in [n]^{(t)}\}.\]
Let \(\mathcal{D}_{(t)}\) denote the set of all \(t\)-derangements in \(S_n\). Clearly, \(\Gamma_{(1)}\) is the Cayley graph on \(S_n\) generated by \(\mathcal{D}_{(1)}\), the set of all derangements in \(S_n\); it is often call the {\em derangement graph} on \(S_n\). Similarly, \(\Gamma_{(t)}\) is the Cayley graph on \(S_n\) generated by \(\mathcal{D}_{(t)}\), the set of all \(t\)-derangments in \(S_n\); we will call \(\Gamma_{(t)}\) the {\em \(t\)-derangment graph} on \(S_n\).

Observe that a \(t\)-set-intersecting family of permutations in \(S_n\) is precisely an independent set in \(\Gamma_{(t)}\). Hence, our task is to find the maximum-sized independent sets in the graph \(\Gamma_{(t)}\).

There are several well-known bounds on the independence number of a graph in terms of the eigenvalues of matrices related to the graph. The simplest is Hoffman's theorem \cite{hoffman}, which bounds the independence number of a \(d\)-regular graph in terms of the least eigenvalue of its adjacency matrix.

\begin{theorem}[Hoffman's theorem]
\label{thm:hoffman}
Let \(H = (V,E)\) be a \(d\)-regular, \(N\)-vertex graph. Let \(A\) be the adjacency matrix of \(H\). Let \(\lambda_{\min}\) denote the least eigenvalue of \(A\). If \(S \subset V\) is an independent set in \(H\), then
\[\frac{|S|}{N} \leq \frac{-\lambda_{\min}}{d - \lambda_{\min}}.\]
If equality holds, then the characteristic function \(1_{S}\) of \(S\) satisfies:
\[1_S - \tfrac{|S|}{N} \boldsymbol{1} \in \Ker(A - \lambda_{\min}I),\]
where \(\mathbf{1}\) denotes the all-\(1\)'s vector.
\end{theorem}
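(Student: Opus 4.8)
The plan is to run the classical quadratic-form (``ratio bound'') argument. Since \(H\) is \(d\)-regular, the all-ones vector \(\mathbf{1}\) is an eigenvector of \(A\) with eigenvalue \(d\); as \(A\) is real symmetric, \(\mathbb{R}^V = \Span(\mathbf{1}) \oplus \mathbf{1}^{\perp}\) is an orthogonal decomposition into \(A\)-invariant subspaces, and every eigenvalue of the restriction of \(A\) to \(\mathbf{1}^{\perp}\) is at least \(\lambda_{\min}\). Given an independent set \(S\), I would write \(1_S = \tfrac{|S|}{N}\mathbf{1} + f\) with \(f := 1_S - \tfrac{|S|}{N}\mathbf{1}\); since \(\langle 1_S, \mathbf{1} \rangle = |S|\) we have \(f \in \mathbf{1}^{\perp}\), and \(\|f\|^2 = \|1_S\|^2 - \tfrac{|S|^2}{N} = |S| - \tfrac{|S|^2}{N}\).

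Next I would compute \(\langle 1_S, A\, 1_S\rangle\) in two ways. On the one hand \(\langle 1_S, A\, 1_S\rangle = \sum_{v,w} A_{v,w} 1_S(v) 1_S(w)\) equals twice the number of edges of \(H\) inside \(S\), which is \(0\) because \(S\) is independent. On the other hand, expanding via \(1_S = \tfrac{|S|}{N}\mathbf{1} + f\), using \(A\mathbf{1} = d\mathbf{1}\) and the fact that \(\langle \mathbf{1}, Af\rangle = \langle A\mathbf{1}, f\rangle = d\langle \mathbf{1}, f\rangle = 0\), the cross terms vanish and one obtains
\[ 0 \;=\; \langle 1_S, A\, 1_S\rangle \;=\; \frac{d|S|^2}{N} + \langle f, Af\rangle. \]
Because every eigenvalue of \(A\) on \(\mathbf{1}^{\perp}\) is \(\ge \lambda_{\min}\), we have \(\langle f, Af\rangle \ge \lambda_{\min}\|f\|^2 = \lambda_{\min}\bigl(|S| - \tfrac{|S|^2}{N}\bigr)\). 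Substituting into the identity, dividing by \(|S|\) (the case \(S = \emptyset\) being trivial) and rearranging, using \(d - \lambda_{\min} > 0\), gives \(\tfrac{|S|}{N}(d - \lambda_{\min}) \le -\lambda_{\min}\), which is the asserted bound.

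For the equality clause: if \(\tfrac{|S|}{N} = \tfrac{-\lambda_{\min}}{d - \lambda_{\min}}\) then every inequality in the chain above is an equality, so in particular \(\langle f, Af\rangle = \lambda_{\min}\|f\|^2\). Writing \(f\) in an orthonormal eigenbasis of \(A\) restricted to \(\mathbf{1}^{\perp}\), say \(f = \sum_i c_i v_i\) with \(A v_i = \mu_i v_i\) and \(\mu_i \ge \lambda_{\min}\), this forces \(\sum_i \mu_i c_i^2 = \lambda_{\min}\sum_i c_i^2\), hence \(c_i = 0\) whenever \(\mu_i > \lambda_{\min}\). Thus \(f\) lies in the \(\lambda_{\min}\)-eigenspace of \(A\), i.e. \(1_S - \tfrac{|S|}{N}\mathbf{1} = f \in \Ker(A - \lambda_{\min}I)\), as required. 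There is no serious obstacle here --- Hoffman's bound is a standard tool --- and the only point needing a little care is the equality analysis, where one must verify that \(f\) is supported entirely on the \(\lambda_{\min}\)-eigenspace, rather than merely having the correct Rayleigh quotient overall.
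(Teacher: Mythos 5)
Your proof is correct and is the standard ratio-bound argument; the paper does not reproduce a proof of Theorem \ref{thm:hoffman} (it cites \cite{hoffman} and defers the proof of the weighted version to \cite{jointpaper}), and your decomposition \(1_S = \tfrac{|S|}{N}\mathbf{1} + f\) with the two-way evaluation of \(\langle 1_S, A 1_S\rangle\) is exactly that standard argument, including the correct handling of the equality case via the eigenbasis expansion of \(f\).
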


It is well-known that for any finite group \(G\), the vector space \(\mathbb{C}[G]\) of all complex-valued functions on \(G\) can be decomposed into a direct sum of subspaces which are eigenspaces of every normal Cayley graph on \(G\). (These subspaces are in 1-1 correspondence with the isomorphism classes of irreducible representations of \(G\).) Hence, the eigenvalues of normal Cayley graphs are relatively easy to handle.

It turns out that calculating the least eigenvalue of \(\Gamma_{(1)}\) (meaning the least eigenvalue of its adjacency matrix) and applying Hoffman's bound yields an alternative proof that an intersecting family in \(S_n\) has size at most \((n-1)!\). Calculating the least eigenvalue of \(\Gamma_{(1)}\) is non-trivial, requiring use of the representation theory of \(S_n\). It was first done by Renteln \cite{renteln}, using symmetric functions, and independently and slightly later by Friedgut and Pilpel \cite{friedguttalk}, and also by Godsil and Meagher \cite{meagher}. As observed in \cite{friedguttalk} and \cite{meagher}, this leads to an alternative proof that the maximum-sized intersecting families in \(S_n\) are cosets of stabilizers of points.

As in the case of the Deza-Frankl conjecture, the obvious generalization of this approach fails for \(t\)-set-intersecting families in \(S_n\): calculating the least eigenvalue of \(\Gamma_{(t)}\) and applying Hoffman's bound only gives an upper bound of \(\Theta((n-1)!)\) on the size of a \(t\)-set-intersecting family, for \(t\) fixed and \(n\) large.

Instead, we construct a `pseudoadjacency matrix' \(A\) for \(\Gamma_{(t)}\); this matrix \(A\) will be a suitable real linear combination of the adjacency matrices of certain subgraphs of \(\Gamma_{(t)}\). The eigenvalues of \(A\) will be such that when we apply a `weighted' version of Hoffman's theorem to the graph \(\Gamma_{(t)}\) (using the matrix \(A\) in place of the adjacency matrix), we obtain the desired upper bound of \(t!(n-t)!\), provided \(n\) is sufficiently large depending on \(t\). The subgraphs chosen will be normal Cayley graphs; this will enable us to analyse the eigenvalues of \(A\) using the representation theory of \(S_n\). Most of the work of the proof is in showing that a suitable linear combination exists; this turns out to be harder than in \cite{jointpaper}, where the same approach was used to prove the Deza-Frankl conjecture.

The eigenspaces of our pseudoadjacency matrix will be such as to imply that, for \(n\) sufficiently large depending on \(t\), if \(\mathcal{A} \subset S_n\) is a \(t\)-set-intersecting family of size \(t!(n-t)!\), then the characteristic function of \(\mathcal{A}\) is a linear combination of characteristic functions of \(t\)-SSC's (cosets of stabilizers of \(t\)-sets). By combining this fact with a result from \cite{jointpaper}, together with some additional combinatorial arguments, we will be able to show that \(\mathcal{A}\) must be a coset of the stabilizer of a \(t\)-set. This will complete the proof of Theorem \ref{thm:main}.

\section{Background}
\subsection*{The eigenvalue bound}
In this section, we recall the `weighted' version of Hoffman's bound we will need.
\begin{definition}
Let \(H = (V,E)\) be an \(N\)-vertex graph. A matrix \(A \in \mathbb{R}^{V \times V}\) is said to be a {\em pseudoadjacency matrix for \(H\)} if it is symmetric, has \(A_{x,y} = 0\) whenever \(xy \notin E(H)\), and has all its row and column sums equal.
\end{definition}

Note that the fact that all the row sums of \(A\) are equal implies that the constant functions are eigenvectors of \(A\). One has the following extension of Hoffman's theorem, due essentially to Delsarte:

\begin{theorem}
\label{thm:pseudo}
Let \(H = (V,E)\) be an \(N\)-vertex graph, and let \(A\) be a pseudoadjacency matrix for \(H\). Let \(\lambda_{\min}\) denote the least eigenvalue of \(A\), and let \(\lambda_{\constant}\) be the eigenvalue corresponding to the constant functions. If \(S \subset V\) is an independent set in \(H\), then
\[\frac{|S|}{N} \leq \frac{-\lambda_{\min}}{\lambda_{\constant} - \lambda_{\min}}.\]
If equality holds, then the characteristic function \(1_{S}\) of \(S\) satisfies:
\[1_S - \tfrac{|S|}{N} \boldsymbol{1} \in \Ker(A - \lambda_{\min}I).\]
\end{theorem}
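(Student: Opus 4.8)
The plan is to adapt the classical eigenvalue argument behind Hoffman's bound, substituting the pseudoadjacency matrix \(A\) for the genuine adjacency matrix; the only structural facts about \(A\) we will need are those built into the definition. Write \(d := \lambda_{\const}\) for the common row sum of \(A\). Since \(A\) is symmetric with \(A\mathbf{1} = d\mathbf{1}\), the hyperplane \(\mathbf{1}^{\perp}\) is \(A\)-invariant, and every eigenvalue of \(A\) restricted to \(\mathbf{1}^{\perp}\) lies in \([\lambda_{\min}, \infty)\). We may assume \(\lambda_{\const} > \lambda_{\min}\) (otherwise the right-hand side is undefined and there is nothing to prove) and that \(S \neq \emptyset\) (the empty case being trivial). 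Put \(\alpha = |S|/N\) and decompose the characteristic vector as \(1_S = \alpha\mathbf{1} + f\) with \(f \in \mathbf{1}^{\perp}\); a direct norm computation gives \(\|f\|^2 = |S| - |S|^2/N = N\alpha(1-\alpha)\).

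Next I would estimate the quadratic form \(f^{\top} A f\) in two ways. First, since \(H\) has no loops, the pseudoadjacency condition forces \(A_{x,y} = 0\) for all \(x, y \in S\) (every such pair, the diagonal included, is a non-edge), so \(1_S^{\top} A 1_S = \sum_{x,y \in S} A_{x,y} = 0\). Expanding \(1_S = \alpha\mathbf{1} + f\) and using \(A\mathbf{1} = d\mathbf{1}\) together with \(f \perp \mathbf{1}\) to kill the cross terms yields \(0 = \alpha^2 d N + f^{\top} A f\), i.e. \(f^{\top} A f = -\alpha^2 d N\). Second, because \(f\) lies in the \(A\)-invariant subspace \(\mathbf{1}^{\perp}\), on which every eigenvalue is at least \(\lambda_{\min}\), we have \(f^{\top} A f \geq \lambda_{\min}\|f\|^2 = \lambda_{\min} N\alpha(1-\alpha)\).

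Comparing the two estimates gives \(-\alpha^2 d N \geq \lambda_{\min} N\alpha(1-\alpha)\); dividing by \(N\alpha > 0\) and rearranging — using \(\lambda_{\min} - \lambda_{\const} < 0\) to reverse the inequality at the final step — produces exactly \(\alpha \leq -\lambda_{\min}/(\lambda_{\const} - \lambda_{\min})\). For the equality case, equality in the bound forces equality in \(f^{\top} A f \geq \lambda_{\min}\|f\|^2\); since \(A\) is symmetric and \(f \in \mathbf{1}^{\perp}\), this holds only when \(f\) is an eigenvector of \(A\) with eigenvalue \(\lambda_{\min}\), i.e. \(f = 1_S - \tfrac{|S|}{N}\mathbf{1} \in \Ker(A - \lambda_{\min}I)\), as claimed.

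I expect no serious obstacle: this is Delsarte's eigenvalue (ratio) bound in matrix form, and the argument is elementary linear algebra. The two points meriting a moment's care are the vanishing of the cross terms — which relies precisely on the symmetry of \(A\) and on \(\mathbf{1}\) being an eigenvector, both guaranteed by the definition of a pseudoadjacency matrix — and the inclusion of the diagonal terms \(x = y\) in \(\sum_{x,y \in S} A_{x,y}\), handled by the absence of loops in \(H\) (equivalently, \(\Trace A = 0\)).
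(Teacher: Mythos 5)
Your argument is correct and is precisely the ``straightforward extension of Hoffman's theorem'' that the paper defers to \cite{jointpaper}: decompose \(1_S = \tfrac{|S|}{N}\mathbf{1} + f\) with \(f \perp \mathbf{1}\), compute \(1_S^{\top}A1_S = 0\) from independence (the diagonal vanishing because a simple graph has no loops), and compare with the spectral lower bound on \(f^{\top}Af\) over the \(A\)-invariant subspace \(\mathbf{1}^{\perp}\). The equality analysis via the eigendecomposition of \(f\) is also the standard one, so there is nothing to add.
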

The proof is a straightforward extension of that of Hoffman's theorem, and is given in \cite{jointpaper}.

\subsection*{Background on general representation theory}
In this section, we recall the basic notions and results we need from general representation theory. For more background, the reader may consult \cite{serre}.

Let \(G\) be a finite group. A {\em representation of \(G\) over \(\mathbb{C}\)} is a pair \((\rho,V)\), where \(V\) is a finite-dimensional vector space over \(\mathbb{C}\), and \(\rho:\ G \to GL(V)\) is a group homomorphism from \(G\) to the group of all invertible linear endomorphisms of \(V\). The vector space \(V\), together with the linear action of \(G\) defined by \(gv = \rho(g)(v)\), is sometimes called a \(\mathbb{C}G\)-{\em module}.  A {\em homomorphism} between two representations \((\rho,V)\) and \((\rho',V')\) is a linear map \(\phi:V \to V'\) such that such that \(\phi(\rho(g)(v)) = \rho'(g)(\phi(v))\) for all \(g \in G\) and \(v \in V\). If \(\phi\) is a linear isomorphism, the two representations are said to be {\em equivalent}, or {\em isomorphic}, and we write \((\rho,V) \cong (\rho',V')\). If \(\dim(V)=n\), we say that \(\rho\) {\em has dimension} \(n\). If \(V = \mathbb{C}^{n}\), then we call \(\rho\) a {\em matrix representation}; choosing a basis for a general \(V\), one sees that any representation is equivalent to some matrix representation.

The representation \((\rho,V)\) is said to be {\em irreducible} if it has no proper subrepresentation, i.e. there is no proper subspace of \(V\) which is \(\rho(g)\)-invariant for all \(g \in G\).

The {\em group algebra} \(\mathbb{C}G\) denotes the complex vector space with basis \(G\) and multiplication defined by extending the group multiplication linearly. In other words,
\[\mathbb{C}G = \left\{\sum_{g \in G}x_{g}g:\ x_{g} \in \mathbb{C}\ \forall g \in G\right\},\]
and
\[\left(\sum_{g \in G} x_{g}g\right)\left(\sum_{h\in G}y_{h}h\right) = \sum_{g,h \in G} x_{g}y_{h} (g h).\]
As a vector-space, \(\mathbb{C}G\) can be identified with \(\mathbb{C}[G]\), the vector space of all complex-valued functions on \(G\), by identifying \(\sum_{g \in G} x_g g\) with the function \(g \mapsto x_g\). The representation defined by
\[\rho(g)(x) = gx\quad (g \in G,\ x \in \mathbb{C}G)\]
is called the {\em left regular representation} of \(G\); the corresponding \(\mathbb{C}G\)-module is called the {\em group module}. This will be useful when we describe irreducible representations of \(S_n\).

It turns out that there are only finitely many irreducible representations of \(G\) over \(\mathbb{C}\), up to isomorpism; any irreducible representation of \(G\) over \(\mathbb{C}\) is isomorphic to a submodule of the group module. Moreover, any representation of \(G\) over \(\mathbb{C}\) is isomorphic to a direct sum of irreducible representations.

If \((\rho,V)\) is a representation of \(V\) over \(\mathbb{C}\), the {\em character} \(\chi_{\rho}\) of \(\rho\) is the map defined by
\begin{eqnarray*}
\chi_{\rho}:  G & \to & \mathbb{C};\\
 \chi_{\rho}(g) &=& \textrm{Tr} (\rho(g)),
\end{eqnarray*}
where \(\textrm{Tr}(\alpha)\) denotes the trace of the linear map \(\alpha\) (i.e. the trace of any matrix of \(\alpha\)). Note that \(\chi_{\rho}(\textrm{Id}) = \dim(\rho)\), and that \(\chi_{\rho}\) is a {\em class function} on \(G\) (meaning that it is constant on each conjugacy class of \(G\).)

The usefulness of characters lies in the following
\begin{fact}
Two complex representations are isomorphic if and only if they have the same character.
\end{fact}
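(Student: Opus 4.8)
The forward implication is immediate and I would dispose of it first: if \(\phi\colon V \to V'\) is an isomorphism of representations, then \(\rho'(g) = \phi\,\rho(g)\,\phi^{-1}\) for every \(g \in G\), and conjugate endomorphisms have equal trace, so \(\chi_{\rho'} = \chi_{\rho}\). For the converse, my plan is to reduce the statement to the linear independence of the irreducible characters, and then to establish that independence via the orthogonality relations.

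Recall (as noted above) that \(G\) has only finitely many irreducible representations up to isomorphism, say \(W_1,\dots,W_k\), with characters \(\chi_1,\dots,\chi_k\), and that every representation is isomorphic to a direct sum of irreducibles. Hence any \((\rho,V)\) satisfies \((\rho,V) \cong \bigoplus_{i=1}^{k} m_i W_i\) for some multiplicities \(m_i \in \mathbb{Z}_{\geq 0}\), and then \(\chi_{\rho} = \sum_{i=1}^{k} m_i \chi_i\). Two representations are isomorphic precisely when their multiplicity tuples \((m_1,\dots,m_k)\) agree, so it suffices to show that \(\chi_{\rho}\) determines \((m_1,\dots,m_k)\); equivalently, that \(\chi_1,\dots,\chi_k\) are \emph{linearly independent} in \(\mathbb{C}[G]\). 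Indeed, if \(\chi_{\rho} = \chi_{\rho'}\), then \(\sum_i (m_i - m_i')\chi_i = 0\), and linear independence forces \(m_i = m_i'\) for all \(i\), whence \((\rho,V) \cong (\rho',V')\).

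The heart of the matter — and the step I expect to require the most work — is establishing this linear independence, which I would derive from the first orthogonality relation \(\langle \chi_i,\chi_j\rangle = \delta_{ij}\), where \(\langle f,h\rangle = \tfrac{1}{|G|}\sum_{g\in G} f(g)\overline{h(g)}\). That relation in turn rests on Schur's lemma: for irreducibles \(W,W'\) and any linear map \(T\colon W\to W'\), the averaged map \(\widetilde{T} = \tfrac{1}{|G|}\sum_{g\in G}\rho_{W'}(g)\,T\,\rho_{W}(g)^{-1}\) is a homomorphism of representations, hence \(0\) when \(W\not\cong W'\) and a scalar \(\tfrac{\mathrm{Tr}(T)}{\dim W}\cdot\mathrm{Id}\) when \(W = W'\); taking \(T\) to run over matrix units and reading off the appropriate matrix entries yields the orthogonality relation. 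Orthonormality of the \(\chi_i\) gives their linear independence for free — and in fact the explicit inversion formula \(m_i = \langle \chi_{\rho},\chi_i\rangle\) — which completes the proof. (One could alternatively avoid Schur's lemma by invoking the decomposition \(\mathbb{C}G \cong \bigoplus_i (\dim W_i)\,W_i\) of the group module and arguing via the regular character, but the orthogonality route is the cleanest and is the one I would take.)
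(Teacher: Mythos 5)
Your proof is correct and is the standard textbook argument: trace-invariance under conjugation for the forward direction, and for the converse, complete reducibility plus the linear independence of the irreducible characters, established via the first orthogonality relation and Schur's lemma. The paper itself offers no proof of this statement --- it is quoted as a background \emph{Fact} from general representation theory (with the reader referred to Serre) --- so there is nothing to compare against; your write-up would serve as a complete proof, and every step (the averaging construction \(\widetilde{T}\), the matrix-unit computation, the inversion formula \(m_i = \langle \chi_{\rho},\chi_i\rangle\)) is sound.
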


If \(\rho\) is any complex representation of \(G\), its character satisfies \(\chi_{\rho}(g^{-1}) = \overline{\chi_{\rho}(g)}\) for every \(g \in G\). In our case, \(G = S_n\), so \(g^{-1}\) is conjugate to \(g\) for every \(g\), and therefore \(\overline{\chi_{\rho}(g)} = \chi_{\rho}(g)\), so all the characters of \(S_n\) are real-valued.

Given two representations \((\rho,V)\) and \((\rho',V')\) of \(G\), we can form their direct sum, the representation \((\rho \oplus \rho',V \oplus V')\), and their tensor product, the representation \((\rho \otimes \rho',V \otimes V')\). We have \(\chi_{\rho \oplus \rho'} = \chi_{\rho}+ \chi_{\rho'}\), and \(\chi_{\rho \otimes \rho'} = \chi_{\rho} \cdot \chi_{\rho'}\).

Let \(\Gamma\) be a graph on \(G\), and let \(A\) be the adjacency matrix of \(\Gamma\). We may consider \(A\) as a linear operator on \(\mathbb{C}[G]\), acting as follows:
\[A f(g) = \sum_{h\in G:\atop gh \in E(\Gamma)} f(h)\quad \forall f \in \mathbb{C}[G],\ g \in G.\]

We will rely crucially on the following.
\begin{theorem}[Schur; Babai; Diaconis, Shahshahani; Roichman.]
\label{thm:normalcayley}
Let \(G\) be a finite group, let \(S \subset G\) be an inverse-closed, conjugation-invariant subset of \(G\), and let \(\Gamma\) be the normal Cayley graph on \(G\) with generating set \(S\). Let \(A\) denote the adjacency matrix of \(\Gamma\). Let \((\rho_{1},V_{1}),\ldots,(\rho_{k},V_{k})\) be a complete set of non-isomorphic irreducible representations of \(G\) --- i.e., containing one representative from each equivalence class of irreducible representations of \(G\). For each \(i\), let \(U_{i}\) denote the sum of all submodules of the group module \(\mathbb{C}G\) which are isomorphic to \(V_{i}\). We have
\[\mathbb{C}G = \bigoplus_{i=1}^{k}U_{i};\]
for each \(i\), \(U_{i}\) is an eigenspace of \(A\), with dimension \((\dim(V_i))^2\), and eigenvalue
\[\lambda_{V_{i}} = \frac{1}{\dim(V_{i})}\sum_{s \in S} \chi_{\rho_i}(s).\]
\end{theorem}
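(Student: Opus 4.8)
The plan is to identify the adjacency operator $A$ with a multiplication operator on the group algebra and then exploit centrality. Identify $\mathbb{C}[G]$ with $\mathbb{C}G$ by sending $f$ to $\sum_{g \in G} f(g)\, g$, and set $\bar{S} := \sum_{s \in S} s \in \mathbb{C}G$. Since $S$ is inverse-closed, the neighbours of $g$ in $\Gamma$ are exactly the elements $gs$ with $s \in S$, so a one-line computation shows that $A$ corresponds to right multiplication by $\bar{S}$: for $f \in \mathbb{C}[G]$ one has $\big(\sum_g f(g)\,g\big)\bar{S} = \sum_h \big(\sum_{s \in S} f(hs)\big) h$, which represents $Af$. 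Because $S$ is conjugation-invariant, $g\bar{S}g^{-1} = \bar{S}$ for all $g \in G$, i.e. $\bar{S}$ lies in the centre $Z(\mathbb{C}G)$.

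Next I would invoke the Artin--Wedderburn structure of $\mathbb{C}G$. The group module decomposes as a direct sum $\mathbb{C}G = \bigoplus_{i=1}^{k} U_i$ of its minimal two-sided ideals, where $U_i$ is the $V_i$-isotypic component (the sum of all left submodules isomorphic to $V_i$); since $V_i$ occurs in the regular representation with multiplicity $\dim V_i$, we get $\dim U_i = (\dim V_i)^2$ and $\sum_i \dim U_i = |G|$. Correspondingly there are central idempotents $e_i \in Z(\mathbb{C}G)$ with $e_i e_j = \delta_{ij} e_i$, $\sum_i e_i = 1$, and $U_i = \mathbb{C}G\, e_i$.

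Now apply Schur's lemma. In the irreducible representation $(\rho_i, V_i)$, the central element $\bar{S}$ acts by $\sum_{s \in S}\rho_i(s)$, an operator commuting with all of $\rho_i(G)$, hence a scalar $\lambda_{V_i} I$; taking traces gives $\lambda_{V_i} = \tfrac{1}{\dim V_i}\sum_{s \in S}\chi_{\rho_i}(s)$. In terms of the idempotents this says $\bar{S} = \sum_{i} \lambda_{V_i} e_i$. Therefore, for any $x \in U_i$ we have $x = x e_i$ and hence $Ax = x\bar{S} = x e_i \big(\sum_j \lambda_{V_j} e_j\big) = \lambda_{V_i}\, x e_i = \lambda_{V_i}\, x$. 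Thus $A$ acts on $U_i$ as multiplication by the scalar $\lambda_{V_i}$; since $\mathbb{C}G = \bigoplus_i U_i$, this exhibits $\mathbb{C}G$ as the direct sum of the $A$-eigenspaces $U_i$, each of dimension $(\dim V_i)^2$ with eigenvalue $\lambda_{V_i}$, as required.

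There is no substantial obstacle here: the result is classical, and the only points requiring a little care are the bookkeeping that matches the combinatorially-defined operator $A$ with \emph{right} (not left) multiplication by $\bar{S}$ --- where inverse-closedness of $S$ is exactly what is used --- and the appeal to the Wedderburn decomposition together with the identification of the isotypic component $U_i$ with a minimal two-sided ideal. One could instead replace the latter by a hands-on argument that decomposes each $U_i$ into irreducible left submodules and applies Schur's lemma directly, but since all of this is standard I would simply cite the original sources.
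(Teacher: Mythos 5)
Your proposal is correct and is exactly the classical argument this theorem rests on: the paper itself states the result with citations rather than a proof, but the intended argument (identify $A$ with right multiplication by $\sum_{s\in S}s$, note this element is central, and apply Schur's lemma on each Wedderburn component) is precisely what you have written, including the correct use of inverse-closedness to match $A$ with right multiplication and of conjugation-invariance for centrality. No gaps.
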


\begin{remark} Each \(U_i\) is a {\em \(2\)-sided ideal} of \(\mathbb{C}G\), meaning that it is closed under both left and right multiplication by elements of \(\mathbb{C}G\).
\end{remark}
So for any finite group \(G\), \(\mathbb{C}[G]\) can be decomposed into a direct sum of subspaces which are eigenspaces of every normal Cayley graph on \(G\). Hence, the eigenvalues of normal Cayley graphs are relatively easy to handle.

\subsection*{Background on the representation theory of \(S_n\)}
In this section we gather all the necessary background that we need regarding the representation theory of $S_n$. Our treatment follows \cite{sagan}, and is similar to the background given in \cite{jointpaper}, with some additions. Readers familiar with the basics of the theory may wish to omit this section.

A \emph{partition} of \(n\) is a non-increasing sequence of positive integers summing to \(n\), i.e. a sequence $\lambda = (\lambda_1, \ldots, \lambda_k)$ with \(\lambda_{1} \geq \lambda_{2} \geq \ldots \geq \lambda_{k} \geq 1\) and \(\sum_{i=1}^{k} \lambda_{i}=n\); we write \(\lambda \vdash n\). For example, \((3,2,2) \vdash 7\); we sometimes use the shorthand \((3,2,2) = (3,2^{2})\). Let \(\mathcal{P}_n\) denote the set of all partitions of \(n\). The following two orders on \(\mathcal{P}_n\) will be useful.

\begin{definition}
  (Dominance order) Let $\lambda = (\lambda_1, \ldots, \lambda_r)$ and $\mu = (\mu_1,
  \ldots, \mu_s)$ be partitions of $n$. We say that $\lambda \domgeq
  \mu$ ($\lambda$ \emph{dominates} $\mu$) if $\sum_{j=1}^{i} \lambda_i \geq \sum_{j=1}^{i} \mu_i$ for all \(i\) (where we define \(\lambda_{i} = 0 \) for all \(i > r\), and \(\mu_{j} = 0\) for all \(j > s\)).
\end{definition}

It is easy to see that this is a partial order.

\begin{definition}
  (Lexicographic order) Let $\lambda = (\lambda_1, \ldots, \lambda_r)$ and $\mu = (\mu_1,
  \ldots, \mu_s)$ be partitions of $n$. We say that $\lambda > \mu$ if \(\lambda_{j} > \mu_{j}\), where \(j = \min\{i \in [n]:\ \lambda_{i} \neq \mu_{i}\}\).
\end{definition}
It is easy to see that this is a total order which extends the dominance order.

The \emph{cycle-type} of a permutation \(\sigma \in S_{n}\) is the partition of \(n\) obtained by expressing \(\sigma\) as a product of disjoint cycles and listing its cycle-lengths in non-increasing order. The conjugacy classes of \(S_{n}\) are precisely
\[\{\sigma \in S_{n}: \textrm{ cycle-type}(\sigma) = \lambda\}_{\lambda \vdash n}.\]
Moreover, there is an explicit one-to-one correspondence between irreducible representations of \(S_{n}\) (up to isomorphism) and partitions of \(n\), which we now describe.

  Let \(\lambda = (\lambda_{1}, \ldots, \lambda_{k})\) be a partiton of \(n\). The \emph{Young diagram} of $\lambda$ is
  an array of $n$ boxes, or `cells', having $k$ left-justified rows, where row $i$
  contains $\lambda_i$ cells. For example, the Young diagram of the partition \((3,2^{2})\) is:
  \[
\yng(3,2,2)
\]

If the array contains the numbers \(\{1,2,\ldots,n\}\) inside the cells, we call it a \(\lambda\)-\emph{tableau}, or a {\em tableau of shape \(\lambda\)}; for example,
\[
\young(617,54,32)
\]
is a \((3,2^{2})\)-tableau. Two \(\lambda\)-tableaux are said to be \emph{row-equivalent} if they have the same numbers in each row; if a \(\lambda\)-tableau \(T\) has rows \(R_{1},\ldots,R_{l} \subset [n]\) and columns \(C_{1},\ldots,C_{l} \subset [n]\), then we let \(R_{T} = S_{R_{1}} \times S_{R_{2}} \times \ldots \times S_{R_{l}}\) be the row-stablizer of \(T\) and \(C_T = S_{C_{1}} \times S_{C_{2}} \times \ldots \times S_{C_{k}}\) be the column-stabilizer.

A \(\lambda\)-\emph{tabloid} is a \(\lambda\)-tableau with unordered row entries (or formally, a row-equivalence class of \(\lambda\)-tableaux); given a tableau \(T\), we write \([T]\) for the tabloid it produces. For example, the \((3,2^{2})\)-tableau above produces the following \((3,2^{2})\)-tabloid:\\
\\
\begin{tabular}{ccc} \{1 & 6 & 7\}\\
\{4 &\ 5\} &\\
\{2 &\ 3\} &\\
\end{tabular}\\
\\
\\
Consider the natural left action of \(S_{n}\) on the set \(X^{\lambda}\) of all \(\lambda\)-tabloids; let \(M^{\lambda} = \mathbb{C}[X^{\lambda}]\) be the corresponding permutation module, i.e. the complex vector space with basis \(X^{\lambda}\) and \(S_{n}\) action given by extending linearly. Each \(M^{\lambda}\) has a special irreducible submodule, which we now describe.
\begin{definition}
Let \(\lambda\) be a partition of \(n\). If $T$ is a \(\lambda\)-tableau, let 
$$\kappa_T := \sum_{\pi \in C_T} \sgn(\pi) \pi\quad \in \mathbb{C}S_n,$$
and let $e_T := \kappa_T \{ T \} \in M^{\lambda}$. The {\em Specht module} $S^{\lambda}$ is the submodule of $M^{\lambda}$ spanned by
$$\{e_T:\ T \textrm{ is a } \lambda\textrm{-tabloid}\}.$$
\end{definition}

\begin{theorem}
The Specht modules \(\{S^{\lambda}:\ \lambda \vdash n\}\) are a complete set of pairwise inequivalent, irreducible representations of $S_n$.
\end{theorem}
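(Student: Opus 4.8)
The plan is to follow the classical route (as in \cite{sagan}): first show that each \(S^\lambda\) is irreducible by proving James's submodule theorem, then separate Specht modules attached to distinct partitions using the dominance order, and finally obtain completeness by a dimension-free counting argument. To begin, I would put on \(M^\lambda\) the \(S_n\)-invariant inner product \(\langle\cdot,\cdot\rangle\) for which the tabloids in \(X^\lambda\) form an orthonormal basis, and record two preliminary facts. First, \(S^\lambda\) is cyclic: since \(g\kappa_T g^{-1} = \kappa_{gT}\) and hence \(g e_T = e_{gT}\) for every \(g \in S_n\), and since \(S_n\) acts transitively on \(\lambda\)-tableaux, we get \(S^\lambda = \mathbb{C}S_n \cdot e_T\) for any single \(\lambda\)-tableau \(T\). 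Second, the \emph{sign lemma}: for every \(u \in M^\lambda\) and every \(\lambda\)-tableau \(T\), the vector \(\kappa_T u\) is a scalar multiple of \(e_T\). This follows from a cancellation argument: if a tabloid \(\{P\}\) has two entries of some row lying in a common column of \(T\), the transposition \(\tau \in C_T\) swapping them fixes \(\{P\}\) and has \(\sgn(\tau) = -1\), so pairing \(\pi \leftrightarrow \pi\tau\) shows \(\kappa_T\{P\} = 0\); otherwise \(\{P\}\) lies in the \(C_T\)-orbit of \(\{T\}\), and \(\kappa_T\{P\} = \pm e_T\). Finally I would note that \(\kappa_T\) is self-adjoint with respect to \(\langle\cdot,\cdot\rangle\), since \(\kappa_T^{\,*} = \sum_{\pi \in C_T}\sgn(\pi)\pi^{-1} = \kappa_T\).

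For irreducibility, I would prove the \emph{submodule theorem}: for any \(S_n\)-submodule \(U \subseteq M^\lambda\), either \(S^\lambda \subseteq U\) or \(U \subseteq (S^\lambda)^{\perp}\). Indeed, if \(\kappa_T u \neq 0\) for some \(u \in U\) and some \(\lambda\)-tableau \(T\), then by the sign lemma \(\kappa_T u\) is a nonzero multiple of \(e_T\), so \(e_T \in U\), whence \(S^\lambda \subseteq U\) by cyclicity; if instead \(\kappa_T u = 0\) for all \(u \in U\) and all \(T\), then using self-adjointness \(\langle u, e_T\rangle = \langle u, \kappa_T\{T\}\rangle = \langle \kappa_T u, \{T\}\rangle = 0\), so \(U \subseteq (S^\lambda)^{\perp}\). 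Since \(\langle\cdot,\cdot\rangle\) restricted to \(S^\lambda\) is positive definite we have \(S^\lambda \cap (S^\lambda)^{\perp} = 0\); applying the submodule theorem to an arbitrary submodule \(U\) of \(S^\lambda\) forces \(U = S^\lambda\) or \(U = 0\), so \(S^\lambda\) is irreducible.

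The combinatorial heart of the inequivalence part is the following claim: if \(\lambda, \mu \vdash n\), \(T\) is a \(\lambda\)-tableau, \(\{P\}\) is a \(\mu\)-tabloid, and \(\kappa_T\{P\} \neq 0\), then \(\lambda \domgeq \mu\). By the cancellation argument above, \(\kappa_T\{P\} \neq 0\) forces no two entries in a row of \(P\) to share a column of \(T\); a pigeonhole count on the columns of \(T\) then yields \(\sum_{j \leq i}\lambda_j \geq \sum_{j \leq i}\mu_j\) for every \(i\). Granting this, suppose \(\theta : S^\lambda \to S^\mu\) is a nonzero \(S_n\)-homomorphism. Composing with the inclusion \(S^\mu \hookrightarrow M^\mu\) and extending by zero on \((S^\lambda)^{\perp}\) (using \(M^\lambda = S^\lambda \oplus (S^\lambda)^{\perp}\)) gives a nonzero homomorphism \(\widehat\theta : M^\lambda \to M^\mu\); picking \(T\) with \(\widehat\theta(e_T) \neq 0\) and writing \(\widehat\theta(e_T) = \widehat\theta(\kappa_T\{T\}) = \kappa_T\,\widehat\theta(\{T\})\) as a linear combination of vectors \(\kappa_T\{P\}\) with \(\{P\}\) a \(\mu\)-tabloid, the claim gives \(\lambda \domgeq \mu\). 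By symmetry \(\mu \domgeq \lambda\), so \(\lambda = \mu\); hence \(S^\lambda \cong S^\mu\) only if \(\lambda = \mu\).

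Finally, for completeness I would simply count: the number of Specht modules \(\{S^\lambda : \lambda \vdash n\}\) is \(|\mathcal{P}_n|\), the number of partitions of \(n\); by the cycle-type correspondence this is the number of conjugacy classes of \(S_n\), which equals the number of isomorphism classes of irreducible complex representations of \(S_n\). Having produced that many pairwise non-isomorphic irreducibles, they must be all of them. As for the main obstacle: all the module-theoretic steps are formal once the two combinatorial inputs are in hand, so I expect the real work to be the cancellation analysis behind the sign lemma and, above all, the dominance claim (column-injectivity \(\Rightarrow \domgeq\)), which is the crux.
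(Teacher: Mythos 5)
Your proof is correct and complete: the sign lemma, James's submodule theorem, the dominance argument for inequivalence, and the conjugacy-class count for completeness are exactly the classical chain of reasoning. The paper itself states this theorem as background without proof, deferring to \cite{sagan}, and your argument is precisely the standard one given there, so there is nothing to compare beyond noting that you have correctly reconstructed it.
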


Hence, any irreducible representation \(\rho\) of \(S_{n}\) is isomorphic to some \(S^{\lambda}\). For example, \(S^{(n)} = M^{(n)}\) is the trivial representation; \(M^{(1^{n})}\) is the left-regular representation (corresponding to the group module \(\mathbb{C}S_{n}\)), and \(S^{(1^{n})}\) is the sign representation.

From now on we will write \([\lambda]\) for the equivalence class of the irreducible representation \(S^{\lambda}\), \(\chi_{\lambda}\) for the irreducible character \(\chi_{S^{\lambda}}\), and \(\xi_{\lambda}\) for the permutation character \(\chi_{M^{\lambda}}\).

If \(U \in [\alpha],\ V \in [\beta]\), we define \([\alpha]+[\beta]\) to be the equivalence class of \(U\oplus V\), and \([\alpha] \otimes [\beta]\) to be the equivalence class of \(U \otimes V\); since \(\chi_{U \oplus V} = \chi_{U}+\chi_{V}\) and \(\chi_{U \otimes V} = \chi_{U} \cdot \chi_{V}\), this corresponds to pointwise addition/multiplication of the corresponding characters.

We will need the following well-known facts about the permutation characters.

\begin{lemma} \label{lemma:dominance-lemma}
  Let $\lambda$ be a partition of $n$, and let $\sigma \in S_n$. If
  $\xi_{\lambda}(\sigma) \neq 0$, then $\textrm{cycle-type}(\sigma) \domleq \lambda$. Moreover, if \(\textrm{cycle-type}(\sigma) = \lambda\), then \(\xi_{\lambda}(\sigma) = 1\).
\end{lemma}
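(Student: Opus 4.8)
The plan is to compute $\xi_\lambda(\sigma)$ combinatorially and read off both assertions from it. Since $M^\lambda = \mathbb{C}[X^\lambda]$ is a permutation module whose natural basis is the set $X^\lambda$ of $\lambda$-tabloids, the value $\xi_\lambda(\sigma) = \chi_{M^\lambda}(\sigma)$ is just the number of $\lambda$-tabloids fixed by $\sigma$. So the first step is to identify the fixed tabloids: writing a $\lambda$-tabloid as an ordered partition $(R_1,\dots,R_k)$ of $[n]$ with $|R_i| = \lambda_i$, the tabloid is fixed by $\sigma$ precisely when $\sigma(R_i) = R_i$ for every $i$, i.e. precisely when each row $R_i$ is a union of cycles of $\sigma$.

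For the first assertion, suppose $\xi_\lambda(\sigma) \neq 0$ and fix a $\lambda$-tabloid $(R_1,\dots,R_k)$ fixed by $\sigma$. Let $\mu = \textrm{cycle-type}(\sigma)$, and list the cycles $c_1, c_2, \dots$ of $\sigma$ in non-increasing order of length, so $|c_j| = \mu_j$. Each cycle $c_j$ lies inside a single row; call its index $\phi(j)$. For any $i$, the cycles $c_1,\dots,c_i$ all lie inside the rows indexed by the set $S = \phi(\{1,\dots,i\})$, which has size at most $i$; hence
\[ \mu_1 + \cdots + \mu_i \;=\; |c_1| + \cdots + |c_i| \;\le\; \sum_{r \in S}\lambda_r \;\le\; \lambda_1 + \cdots + \lambda_i, \]
where the last step uses that $\lambda$ is non-increasing and $|S| \le i$. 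As this holds for every $i$, we conclude $\mu \domleq \lambda$.

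For the second assertion, suppose $\textrm{cycle-type}(\sigma) = \lambda$, so the cycles $c_1,\dots,c_k$ of $\sigma$ have lengths $\lambda_1 \ge \cdots \ge \lambda_k$; a fixed $\lambda$-tabloid then corresponds to a way of distributing these $k$ cycles among the $k$ rows so that the cycles assigned to row $i$ have total length exactly $\lambda_i$. I would count these distributions directly, processing the rows in order of decreasing length and using at each stage that a row of length $\ell$ cannot be filled exactly by cycles all of length strictly less than $\ell$ without stranding a longer cycle later; a short count of the remaining possibilities then completes the proof of the second assertion.

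The identification of $\xi_\lambda(\sigma)$ with a count of fixed tabloids and the dominance inequality are routine — the latter drops out once one notices that the $i$ longest cycles of $\sigma$ occupy at most $i$ rows of any fixed tabloid. The one point that needs a little care is the final count in the case $\textrm{cycle-type}(\sigma) = \lambda$ (pinning down exactly which cycle-to-row assignments are possible), so I expect that to be the main, and only mildly annoying, obstacle.
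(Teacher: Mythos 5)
Your treatment of the first assertion is complete and correct, and it follows the same route as the paper: identify $\xi_{\lambda}(\sigma)$ with the number of fixed $\lambda$-tabloids, observe that a fixed tabloid forces each cycle into a single row, and deduce dominance. You actually supply a detail the paper elides --- the paper just says that the cycle-type is a ``refinement'' of $\lambda$ and hence dominated by it, whereas your observation that the $i$ longest cycles occupy at most $i$ rows proves that implication.

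The second assertion is where the genuine gap lies, and it cannot be closed as stated, because the deferred count does not come out to $1$. If $\sigma$ has cycle-type $\lambda$ and fixes $(R_1,\dots,R_k)$, then each row is a nonempty union of cycles; since the number of cycles equals the number of rows, each row consists of exactly one cycle, of length $\lambda_i$ for row $i$. The fixed tabloids therefore correspond to the length-preserving bijections from cycles to rows, and there are $\prod_j m_j!$ of them, where $m_j$ is the number of parts of $\lambda$ equal to $j$. This equals $1$ only when the parts of $\lambda$ are distinct. Concretely, for $\lambda=(2,1,1)$ and $\sigma=(1\,2)\in S_4$, the fixed tabloids are $(\{1,2\},\{3\},\{4\})$ and $(\{1,2\},\{4\},\{3\})$, so $\xi_{\lambda}(\sigma)=2$; Young's rule confirms this, since $\xi_{(2,1,1)}=\chi_{(4)}+2\chi_{(3,1)}+\chi_{(2,2)}+\chi_{(2,1,1)}$ evaluates at a transposition to $1+2+0-1=2$. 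So your instinct that the final count ``needs a little care'' is exactly right, but the care shows that the conclusion must be weakened to $\xi_{\lambda}(\sigma)\geq 1$ (or restricted to $\lambda$ with distinct parts). The paper's own proof commits the same oversight, asserting that the tabloid whose rows are the cycles of $\sigma$ is the \emph{unique} fixed tabloid. Fortunately, everything downstream (Corollary \ref{corr:permuppertriangular} and the invertibility of $\tilde{N}$) only requires the diagonal entries of $N$ to be nonzero, which $\xi_{\lambda}(\sigma_{\lambda})=\prod_j m_j!\geq 1$ does deliver.
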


\begin{proof}
  The set of $\lambda$-tabloids is a basis for the permutation module $M^{\lambda}$. Thus, $\xi_{\lambda}(\sigma)$, which is the trace of the corresponding representation on the permutation \(\sigma\), is simply the number of \(\lambda\)-tabloids fixed by \(\sigma\). If $\xi_{\lambda}(\sigma) \neq 0$, then $\sigma$ fixes some \(\lambda\)-tabloid \([T]\). Hence, every row of length $l$
  in \([T]\) is a union of the sets of numbers in a collection of disjoint cycles of total
  length $l$ in $\sigma$. Thus, the cycle-type of \(\sigma\) is a refinement of \(\lambda\), and therefore $\textrm{cycle-type}(\sigma) \domleq \lambda$, as required. If \(\sigma\) has cycle-type \(\lambda\), then it fixes just one \(\lambda\)-tabloid, the one whose rows correspond to the cycles of \(\sigma\), so \(\xi_{\lambda}(\sigma) = 1\). 
\end{proof}

Since the lexicographic order extends the dominance order, we immediately obtain the following.

\begin{corollary}
\label{corr:permuppertriangular}
Let $N$ denote the permutation-character table of $S_n$, with rows and columns
  indexed by partitions of \(n\) in decreasing lexicographic
  order. (So $N_{\lambda, \mu} = \xi_{\lambda}(\sigma_{\mu})$, where
  $\sigma_{\mu}$ is a permutation with cycle-type $\mu$.) Then \(N\) is {\em strictly upper-triangular} (meaning that it is upper-triangular with 1's all along the main diagonal).
\end{corollary}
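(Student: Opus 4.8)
The plan is to deduce the corollary directly from Lemma \ref{lemma:dominance-lemma}; there is essentially no new content, the task being just to translate the lemma into the language of the matrix $N$ while keeping track of the indexing convention.

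First I would recall that, by definition, $N_{\lambda,\mu} = \xi_{\lambda}(\sigma_{\mu})$, where $\sigma_{\mu}$ is any permutation of cycle-type $\mu$; this is well-defined precisely because $\xi_{\lambda}$ is a class function. Now fix partitions $\lambda,\mu \vdash n$ and suppose $N_{\lambda,\mu} \neq 0$, i.e.\ $\xi_{\lambda}(\sigma_{\mu}) \neq 0$. By the first assertion of Lemma \ref{lemma:dominance-lemma}, $\textrm{cycle-type}(\sigma_{\mu}) \domleq \lambda$, i.e.\ $\mu \domleq \lambda$. Next I would invoke the fact (noted immediately after the definition of the lexicographic order) that the lexicographic order is a total order extending the dominance order; hence $\mu \domleq \lambda$ forces $\mu \leq \lambda$ in lexicographic order. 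Since the rows and columns of $N$ are listed in \emph{decreasing} lexicographic order, an entry $N_{\lambda,\mu}$ with $\mu \leq \lambda$ (lex) lies on or above the main diagonal. Contrapositively, every entry strictly below the main diagonal vanishes, so $N$ is upper-triangular.

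Finally, for the diagonal entries I would take $\mu = \lambda$: the second assertion of Lemma \ref{lemma:dominance-lemma} gives $\xi_{\lambda}(\sigma_{\lambda}) = 1$ (a permutation of cycle-type $\lambda$ fixes exactly one $\lambda$-tabloid, the one whose rows are its cycles), so $N_{\lambda,\lambda} = 1$ for every $\lambda \vdash n$. Combining the two observations, $N$ is upper-triangular with $1$'s along the main diagonal, i.e.\ strictly upper-triangular, as claimed. The only point requiring any care is the bookkeeping around the ordering convention — decreasing lexicographic order on both rows and columns, together with ``lex extends dominance'' — so that the forced zeros land strictly below the diagonal and not strictly above it; beyond this there is no genuine obstacle.
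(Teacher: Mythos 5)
Your proposal is correct and is exactly the argument the paper intends: the corollary is stated as an immediate consequence of Lemma \ref{lemma:dominance-lemma} together with the fact that the lexicographic order extends the dominance order, which is precisely the deduction you spell out (including the diagonal entries via $\xi_{\lambda}(\sigma_{\lambda})=1$). No issues.
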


We now explain how the permutation modules \(M^{\mu}\) decompose into irreducibles.

\begin{definition}
Let \(\lambda,\mu\) be partitions of \(n\). A \emph{generalized} \(\lambda\)-\emph{tableau} is produced by placing a number between 1 and \(n\) in each space of the Young diagram of \(\lambda\); if it has \(\mu_{i}\) \(i\)'s \((1 \leq i \leq n)\), then it is said to have \emph{content} \(\mu\). A generalized \(\lambda\)-tableau is said to be \emph{semistandard} if the numbers are non-decreasing along each row and strictly increasing down each column.
\end{definition}

  \begin{definition} \label{def:kostka}
  Let $\lambda, \mu$ be partitions of $n$. The {\em Kostka number}
  $K_{\lambda,\mu}$ is the number of semistandard generalized $\lambda$-tableaux
  with content $\mu$.
\end{definition}

\begin{theorem}[Young's Rule]
\label{thm:young-rule}
Let $\mu$ be a partition of $n$. Then the permutation module
  $M^{\mu}$ decomposes as
  \[M^{\mu} \cong \bigoplus_{\lambda \vdash n} K_{\lambda, \mu}
  S^{\lambda}.\]
  Hence,
  \[\xi_{\mu}=\sum_{\lambda \vdash n} K_{\lambda,\mu}\chi_{\lambda}.\]
\end{theorem}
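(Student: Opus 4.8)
The second assertion of the theorem follows immediately from the first: applying the trace and using $\chi_{U\oplus V}=\chi_U+\chi_V$ to the isomorphism $M^{\mu}\cong\bigoplus_{\lambda}K_{\lambda,\mu}S^{\lambda}$ yields $\xi_{\mu}=\sum_{\lambda}K_{\lambda,\mu}\chi_{\lambda}$. So the plan is to prove the module decomposition. Since $\mathbb{C}S_n$ is semisimple, $M^{\mu}$ is a direct sum of Specht modules, say $M^{\mu}\cong\bigoplus_{\lambda\vdash n}m_{\lambda,\mu}S^{\lambda}$ with each $m_{\lambda,\mu}\in\mathbb{Z}_{\geq 0}$, and by Schur's lemma $m_{\lambda,\mu}=\dim_{\mathbb{C}}\mathrm{Hom}_{S_n}(S^{\lambda},M^{\mu})$. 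Thus everything reduces to showing $\dim_{\mathbb{C}}\mathrm{Hom}_{S_n}(S^{\lambda},M^{\mu})=K_{\lambda,\mu}$ for every pair of partitions $\lambda,\mu$ of $n$.

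The plan is then to realize this Hom-space concretely in terms of tableaux. First one computes $\mathrm{Hom}_{S_n}(M^{\lambda},M^{\mu})$: because $S_n$ acts transitively on $\lambda$-tabloids, an $S_n$-homomorphism $M^{\lambda}\to M^{\mu}$ is determined by the image of one fixed $\lambda$-tabloid $\{T^{\lambda}\}$, which can be any element of $M^{\mu}$ fixed by the row-stabilizer $R_{T^{\lambda}}$ (a Young subgroup). One checks that the space of such fixed elements has a basis $\{\theta_T\}$ indexed by the generalized $\lambda$-tableaux $T$ of content $\mu$ whose rows are weakly increasing, where $\theta_T(\{T^{\lambda}\})$ is the sum of all $\mu$-tabloids ``compatible with'' $T$ (equivalently, these tableaux index the nonnegative integer matrices with row sums $\lambda$ and column sums $\mu$, which parametrize the $R_{T^{\lambda}}$-orbits of $\mu$-tabloids). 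Restricting each $\theta_T$ to the Specht submodule $S^{\lambda}\subseteq M^{\lambda}$ gives homomorphisms $\hat{\theta}_T\colon S^{\lambda}\to M^{\mu}$; since $S^{\lambda}$ is a direct summand of $M^{\lambda}$, restriction $\mathrm{Hom}_{S_n}(M^{\lambda},M^{\mu})\to\mathrm{Hom}_{S_n}(S^{\lambda},M^{\mu})$ is surjective, so the $\hat{\theta}_T$ span $\mathrm{Hom}_{S_n}(S^{\lambda},M^{\mu})$.

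It remains to show that, among these, precisely the $\hat{\theta}_T$ with $T$ \emph{semistandard} (columns also strictly increasing) form a basis; this gives $\dim_{\mathbb{C}}\mathrm{Hom}_{S_n}(S^{\lambda},M^{\mu})=K_{\lambda,\mu}$ and finishes the proof. Linear independence of the semistandard $\hat{\theta}_T$ comes from evaluating on the polytabloid $e_{T^{\lambda}}=\kappa_{T^{\lambda}}\{T^{\lambda}\}$ and ordering the $\mu$-tabloids that appear so that the ``leading'' tabloid in $\hat{\theta}_T(e_{T^{\lambda}})$ recovers $T$ — a triangularity argument relying on the sign-cancellation property that if a row of some $\mu$-tabloid meets a column of $T^{\lambda}$ in two elements, its contribution is killed by $\kappa_{T^{\lambda}}$. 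Spanning by the semistandard ones is the technical heart and the main obstacle: for a non-semistandard $T$ one rewrites $\hat{\theta}_T$ as an integer combination of $\hat{\theta}_{T'}$ with $T'$ strictly closer to semistandard (in a suitable order on tableaux), using Garnir-type straightening relations, and argues the process terminates. Essentially all the work of the proof lies in this straightening step. (Alternatively, the identity $\xi_{\mu}=\sum_{\lambda}K_{\lambda,\mu}\chi_{\lambda}$ can be deduced from the symmetric-function identity $h_{\mu}=\sum_{\lambda}K_{\lambda,\mu}s_{\lambda}$ via the Frobenius characteristic isomorphism, but that would require first developing the characteristic map, which we have not done here.)
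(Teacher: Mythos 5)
The paper does not prove Young's Rule: it is quoted in the background section as a standard result (following \cite{sagan}), so there is no in-paper argument to compare yours against. Your outline is the standard textbook proof (essentially Sagan, Chapter 2): reduce to computing $\dim\mathrm{Hom}_{S_n}(S^{\lambda},M^{\mu})$ via semisimplicity and Schur's lemma, identify $\mathrm{Hom}_{S_n}(M^{\lambda},M^{\mu})$ with the span of the $\theta_T$ indexed by row-weakly-increasing generalized $\lambda$-tableaux of content $\mu$ (equivalently, nonnegative integer matrices with row sums $\lambda$ and column sums $\mu$), restrict to $S^{\lambda}$, and show the semistandard $\hat{\theta}_T$ form a basis. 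All of the reductions you make are correct, and the triangularity argument you sketch for linear independence is the right one.

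The one substantive caveat is that you have not actually proved the spanning step: you correctly identify the straightening of non-semistandard homomorphisms via Garnir-type relations as ``the technical heart,'' but you only name it rather than carry it out (one needs the Garnir elements, the column-ordering on tableaux with respect to which the rewriting strictly decreases, and the termination argument). As a self-contained proof this is therefore an outline with its hardest component deferred to the literature --- which is perfectly reasonable here, since the paper itself treats the entire theorem as citable background, but you should be aware that the deferred step is where essentially all the work lives.
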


We will need the following well-known facts about the Kostka numbers.

\begin{lemma}
 \label{lemma:kostka}
Let \(\lambda\) and \(\mu\) be partitions of \(n\). If \(K_{\lambda,\mu} \geq 1\), then \(\lambda \domgeq \mu\). Moreover, \(K_{\lambda,\lambda}=1\) for all \(\lambda\).
\end{lemma}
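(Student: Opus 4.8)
The plan is to deduce both claims directly from the definition of a semistandard generalized tableau, exploiting only the column-strict condition.

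First I would record a basic observation: in any semistandard $\lambda$-tableau, the entry sitting in row $r$ of a column is at least $r$. This is immediate by induction on $r$, since the entries down a column form a strictly increasing sequence of positive integers (so the top entry is $\geq 1$, and each subsequent one exceeds the one above, hence is at least one larger). The useful consequence is that a cell containing the value $i$ can lie only in one of the first $i$ rows.

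For the dominance statement, I would then fix $i$ and count the cells of a semistandard $\lambda$-tableau of content $\mu$ whose entry lies in $\{1,\ldots,i\}$: there are exactly $\mu_1+\cdots+\mu_i$ of them, and by the observation they all lie inside the first $i$ rows, which contain $\lambda_1+\cdots+\lambda_i$ cells in total. Hence $\mu_1+\cdots+\mu_i \leq \lambda_1+\cdots+\lambda_i$ for every $i$, which is exactly $\lambda \domgeq \mu$.

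For $K_{\lambda,\lambda}=1$, I would first exhibit the tableau $T_0$ that puts the number $i$ in every cell of row $i$; it is semistandard (rows constant, and any column reads $1,2,3,\ldots$ down its length) and has content $\lambda$, so $K_{\lambda,\lambda}\geq 1$. For uniqueness I would take an arbitrary semistandard $\lambda$-tableau $T$ of content $\lambda$ and prove by induction on $r$ that row $r$ is filled entirely with $r$'s: once rows $1,\ldots,r-1$ are known to be constant with values $1,\ldots,r-1$ respectively, they have consumed all copies of each of those values (as $\mu=\lambda$ gives exactly $\lambda_j$ copies of $j$), while every copy of $r$ must lie in rows $1,\ldots,r$ by the observation, hence in row $r$; since row $r$ has exactly $\lambda_r$ cells and $\lambda_r$ copies of $r$ remain, row $r$ is precisely those copies. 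Thus $T=T_0$. I do not expect a genuine obstacle here; the only points needing a little care are the clean formulation of the ``value $i$ forces row $\leq i$'' observation and keeping the bookkeeping of remaining copies of each value transparent in the uniqueness induction.
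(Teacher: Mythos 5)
Your proof is correct and follows the same route as the paper's: the column-strictness observation that an entry $i$ can only occupy the first $i$ rows yields the dominance inequality, and the canonical tableau with $i$'s filling row $i$ is identified as the unique semistandard tableau of content $\lambda$. You simply spell out the details (the row-index bound and the uniqueness induction) that the paper leaves as immediate observations.
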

\begin{proof}
If \(K_{\lambda,\mu} \geq 1\), then there exists a semistandard generalized \(\lambda\)-tableau of content \(\mu\). All \(\mu_{i}\) \(i\)'s must appear in the first \(i\) rows of this tableau, so \(\sum_{j=0}^{i} \mu_{j} \leq \sum_{j=0}^{i} \lambda_{j}\) for each \(i\). Hence, \(\lambda \domgeq \mu\). Observe that the generalized \(\lambda\)-tableau with \(\lambda_{i}\) \(i\)'s in the \(i\)th row is the unique semistandard generalized \(\lambda\)-tableau of content \(\lambda\), so \(K_{\lambda,\lambda}=1\) for every partition \(\lambda\).
\end{proof}
Again, since the lexicographic order extends the dominance order, we immediately obtain the following.

\begin{corollary} \label{corr:uppertriangular}
Let \(K\) denote the Kostka matrix, with rows and columns
  indexed by partitions of \(n\) in decreasing lexicographic
  order. Then \(K\) is strictly upper-triangular, so for any partition \(\gamma\) of \(n\),
\[\Span\{\chi_{\alpha}:\ \alpha \geq \gamma\} = \Span\{\xi_{\beta}:\ \beta \geq \gamma\}.\]
\end{corollary}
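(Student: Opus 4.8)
The plan is to deduce everything from Young's Rule (Theorem~\ref{thm:young-rule}) together with Lemma~\ref{lemma:kostka} and the fact, noted just after the definition of the lexicographic order, that lex order is a total order refining the dominance order.

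First I would establish the triangularity of \(K\). By Lemma~\ref{lemma:kostka}, \(K_{\lambda,\mu}\geq 1\) forces \(\lambda \domgeq \mu\), and since lex order extends dominance this gives \(\lambda \geq \mu\) in lex order. So if the rows and columns of \(K\) are listed in decreasing lex order, every nonzero entry \(K_{\lambda,\mu}\) has row index \(\lambda\) at or above column index \(\mu\); i.e.\ \(K\) is upper-triangular. The diagonal entries are \(K_{\lambda,\lambda}=1\), again by Lemma~\ref{lemma:kostka}, so \(K\) is strictly upper-triangular in the stated sense (unit diagonal). In particular \(K\) is invertible, and \(K^{-1}\) is also upper-triangular with unit diagonal.

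Next, for the span identity, fix a partition \(\gamma\) of \(n\) and let \(P_\gamma=\{\delta \vdash n:\ \delta \geq \gamma\}\). Young's Rule reads \(\xi_\beta=\sum_{\lambda \vdash n}K_{\lambda,\beta}\chi_\lambda\); by the triangularity just established, only terms with \(\lambda \geq \beta\) contribute, so if \(\beta \in P_\gamma\) then \(\xi_\beta \in \Span\{\chi_\alpha:\ \alpha \geq \gamma\}\), giving one inclusion. For the reverse inclusion, observe that the square submatrix \((K_{\lambda,\mu})_{\lambda,\mu \in P_\gamma}\) is itself upper-triangular with unit diagonal, hence invertible; inverting it expresses each \(\chi_\alpha\) with \(\alpha \in P_\gamma\) as a linear combination of the \(\xi_\beta\) with \(\beta \in P_\gamma\). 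Hence the two spans coincide.

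The corollary is a formality once the triangularity of \(K\) is in hand, so there is no real obstacle; the only point needing a moment's care is that restricting attention to partitions \(\geq \gamma\) loses no information --- that is, \(\xi_\beta\) for \(\beta \geq \gamma\) involves no \(\chi_\alpha\) with \(\alpha < \gamma\) --- which is immediate since such an \(\alpha\) would satisfy \(\alpha \geq \beta \geq \gamma\), a contradiction. Equivalently, one can simply note that both spans have dimension \(|P_\gamma|\): the \(\chi_\alpha\) are linearly independent as distinct irreducible characters, the \(\xi_\beta\) with \(\beta \in P_\gamma\) are related to them by an invertible matrix, and one span contains the other.
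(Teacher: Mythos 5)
Your proposal is correct and follows exactly the route the paper intends: the triangularity of \(K\) comes from Lemma \ref{lemma:kostka} together with the fact that lexicographic order refines dominance, and the span identity then follows from Young's Rule by inverting the unit-upper-triangular submatrix indexed by \(\{\delta \geq \gamma\}\). The paper treats this as immediate and gives no further detail, so your write-up simply makes explicit the argument the paper leaves to the reader.
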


On the other hand, we can express the irreducible characters in terms of the permutation characters using the {\em determinantal formula}: for any partition \(\alpha\) of \(n\),
\begin{equation}\label{eq:determinantalformula} \chi_{\alpha} = \sum_{\pi \in S_{n}} \sgn(\pi) \xi_{\alpha - \textrm{id}+\pi}.\end{equation}
Here, if \(\alpha = (\alpha_{1},\alpha_{2},\ldots,\alpha_{l})\), \(\alpha - \textrm{id}+\pi\) is defined to be the sequence \((\alpha_{1}-1+\pi(1),\alpha_{2}-2+\pi(2),\ldots,\alpha_{l}-l+\pi(l))\). If this sequence has all its entries non-negative, we let \(\overline{\alpha-\textrm{id}+\pi}\) be the partition of \(n\) obtained by reordering its entries, and we define \(\xi_{\alpha - \textrm{id}+\pi} = \xi_{\overline{\alpha-\textrm{id}+\pi}}\). If the sequence has a negative entry, we define \(\xi_{\alpha - \textrm{id}+\pi} = 0\). Note that if \(\xi_{\beta}\) appears on the right-hand side of (\ref{eq:determinantalformula}), then \(\beta \geq \alpha\), so the determinantal formula expresses \(\chi_{\alpha}\) in terms of \(\{\xi_{\beta}: \ \beta \geq \alpha\}\).

The restriction of an irreducible representation of \(S_{n}\) to the subgroup \(\{\sigma \in S_{n}: \sigma(i)=i \ \forall i > n-k\} = S_{n-k}\) can be decomposed into irreducible representations of \(S_{n-k}\) as follows.

\begin{theorem}[The Branching Rule]
\label{thm:branching-rule}
   Let \(\alpha\) be a partition of \(n-k\), and \(\lambda\) a partition of \(n\). We write $\alpha \subset^k \lambda$ if the Young diagram of $\alpha$ can be produced from that of \(\lambda\) by sequentially removing \(k\) corners (so that after removing the \(i\)th corner, we have the Young diagram of a partition of \(n-i\).) Let \(a_{\alpha,\lambda}\) be the number of ways of doing this; then we have
  \[[\lambda] \downarrow S_{n-k} = \sum_{\alpha \vdash n-k:\ \alpha \subset^{k} \lambda} a_{\alpha,\lambda}[\alpha],\]
  and therefore
  \[\chi_{\lambda} \downarrow S_{n-k} = \sum_{\alpha \vdash n-k:\ \alpha \subset^{k} \lambda} a_{\alpha, \lambda}\chi_{\alpha}.\]
  \end{theorem}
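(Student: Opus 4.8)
The plan is to prove the module statement first --- the character identity then follows immediately by taking traces --- and to reduce everything to the case \(k=1\), bootstrapping to general \(k\) by induction using the fact that over \(\mathbb{C}\) every representation is semisimple, so that restriction can be carried out one step at a time and submodule filtrations split.

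First I would treat \(k=1\), i.e.\ \([\lambda]\downarrow S_{n-1} = \sum_{\mu} [\mu]\), the sum being over the partitions \(\mu\vdash n-1\) obtained from \(\lambda\) by deleting a single removable corner. Here \(S_{n-1}\) is the subgroup fixing \(n\). The proof works with the standard basis \(\{e_{t}:\ t \textrm{ a standard }\lambda\textrm{-tableau}\}\) of \(S^{\lambda}\), which I take as known (it is in \cite{sagan}). In a standard tableau \(n\) must occupy a removable corner of \(\lambda\); list the removable corners as \(c_{1},\ldots,c_{p}\) in a suitable linear order (ordered by the rows they lie in, read in the appropriate direction), let \(\mu^{(j)}\) be \(\lambda\) with \(c_j\) deleted, and for \(0\le j\le p\) set
\[
V^{(j)} := \Span\{e_{t}:\ t \textrm{ standard},\ n \textrm{ occupies } c_{i} \textrm{ in } t \textrm{ for some } i\le j\},
\]
so that \(0 = V^{(0)}\subseteq V^{(1)}\subseteq\cdots\subseteq V^{(p)} = S^{\lambda}\). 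The two things to verify are: (i) each \(V^{(j)}\) is an \(S_{n-1}\)-submodule; and (ii) the linear map \(e_{t}\mapsto e_{t'}\), where \(t'\) is the standard \(\mu^{(j)}\)-tableau got from \(t\) by deleting the cell containing \(n\), induces an \(S_{n-1}\)-module isomorphism \(V^{(j)}/V^{(j-1)}\cong S^{\mu^{(j)}}\). Granting these, semisimplicity of \(\mathbb{C}[S_{n-1}]\) splits the filtration and gives \(S^{\lambda}\downarrow S_{n-1}\cong\bigoplus_{j=1}^{p}S^{\mu^{(j)}}\).

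For general \(k\), induct on \(k\). Regarding \(S_{n-k}\) as a subgroup of \(S_{n-k+1}\), restrict in one more step and apply the inductive hypothesis to \([\lambda]\downarrow S_{n-k+1}\) and the \(k=1\) case to each \([\nu]\downarrow S_{n-k}\):
\[
[\lambda]\downarrow S_{n-k} = \bigl([\lambda]\downarrow S_{n-k+1}\bigr)\downarrow S_{n-k} = \sum_{\nu\vdash n-k+1:\ \nu\subset^{k-1}\lambda} a_{\nu,\lambda}\,\bigl([\nu]\downarrow S_{n-k}\bigr).
\]
Collecting the coefficient of \([\alpha]\) (\(\alpha\vdash n-k\)) yields \(\sum_{\nu:\ \alpha\subset^{1}\nu\subset^{k-1}\lambda} a_{\nu,\lambda}\), which is exactly the number of ways to remove \(k\) corners from \(\lambda\) in sequence so as to arrive at \(\alpha\) --- append the last step \(\nu\to\alpha\) to a length-\((k-1)\) removal chain \(\lambda\to\cdots\to\nu\), and conversely read off \(\nu\) as the penultimate term --- that is, \(a_{\alpha,\lambda}\). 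Taking characters on both sides gives the stated identity for \(\chi_{\lambda}\downarrow S_{n-k}\).

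The main obstacle is step (i): the \(S_{n-1}\)-invariance of \(V^{(j)}\). Although \(\pi\in S_{n-1}\) fixes \(n\) and hence \(\pi e_{t} = e_{\pi t}\) with \(n\) in the same cell of \(\pi t\) as in \(t\), the tableau \(\pi t\) is usually not standard, and rewriting \(e_{\pi t}\) in the standard basis via the Garnir relations / straightening algorithm can move \(n\) into a different removable corner; the entire content of the \(k=1\) case is that straightening moves \(n\) only ``in one direction'' among the removable corners, which is precisely what the chosen ordering \(c_{1},\ldots,c_{p}\) records. Step (ii) is then comparatively routine, since deleting the cell containing \(n\) is a bijection from the standard \(\lambda\)-tableaux with \(n\) at \(c_{j}\) onto the standard \(\mu^{(j)}\)-tableaux, and it intertwines the \(S_{n-1}\)-actions modulo \(V^{(j-1)}\). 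The passage from \(k=1\) to general \(k\) is pure bookkeeping and should cause no difficulty.
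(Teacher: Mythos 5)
The paper does not prove this statement: the Branching Rule is quoted as standard background (the section explicitly follows \cite{sagan}), so there is no in-paper argument to compare against. Your sketch is essentially the textbook proof --- the filtration of \(S^{\lambda}\) by the position of \(n\) in standard tableaux for \(k=1\), followed by transitivity of restriction and a chain-counting induction for general \(k\) --- and the reduction to \(k=1\) together with the identification of \(\sum_{\nu}a_{\nu,\lambda}\) with \(a_{\alpha,\lambda}\) is correct. One remark on the step you flag as the main obstacle: in Sagan's treatment one never has to prove directly that straightening moves \(n\) ``in one direction''. Instead one defines, for each removable corner \(c_{i}\), a linear map \(\theta_{i}:M^{\lambda}\to M^{\mu^{(i)}}\) sending a tabloid to its truncation if \(n\) lies in row \(r_{i}\) and to \(0\) otherwise; one checks that \(\theta_{i}\) intertwines the \(S_{n-1}\)-actions, that \(\theta_{i}(V^{(i)})=S^{\mu^{(i)}}\), and that \(V^{(i-1)}\subseteq\ker\theta_{i}\cap V^{(i)}\). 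The inequalities \(\dim V^{(i)}\leq\dim V^{(i-1)}+f^{\mu^{(i)}}\), summed against the identity \(f^{\lambda}=\sum_{i}f^{\mu^{(i)}}\) coming from the standard-basis bijection you already use in step (ii), then force equality throughout and yield both the submodule property and the quotient isomorphism at once. If you intend to argue via the Garnir relations instead, you would need to fix the ordering of the corners precisely and verify the one-directionality claim, which is more delicate than your sketch suggests; the dimension-count route avoids this entirely.
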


\begin{definition}
Let \(\lambda = (\lambda_{1},\ldots,\lambda_{k})\) be a partition of \(n\); if its Young diagram has columns of lengths \(\lambda_{1}' \geq \lambda_{2}' \geq \ldots \geq \lambda_{l}' \geq 1\), let \(\lambda^{\top}\) denote the partition \((\lambda_{1}'\ldots,\lambda_{l}')\) of \(n\). The partition \(\lambda^\top\) is called the {\em transpose} of \(\lambda\). (Its Young diagram is the transpose of that of \(\lambda\)).
\end{definition}

There is a useful relationship between the Specht modules \(S^{\lambda}\) and \(S^{\lambda^{\top}}\):

\begin{theorem}
\label{thm:transpose-sign}
If \(\lambda\) is a partition of \([n]\), then \(S^{(\lambda^{\top})} \cong S^{\lambda} \otimes S^{(1^n)}\). Hence, \(\chi_{\lambda^{\top}} = \chi_{\lambda} \cdot \textrm{sgn}\).
\end{theorem}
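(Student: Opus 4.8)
The plan is to identify $S^\lambda\otimes S^{(1^n)}$ as a Specht module and then to check that its label is $\lambda^\top$. First I would observe that, since $S^{(1^n)}$ is one-dimensional, every element of $S_n$ acts on $S^{(1^n)}$ by a scalar; identifying $V\otimes S^{(1^n)}$ with $V$ as a vector space, the two $S_n$-actions then differ only by scalars, so they have the same invariant subspaces. Hence $S^\lambda\otimes S^{(1^n)}$ is irreducible, and by the classification of the irreducibles of $S_n$ it is isomorphic to $S^{f(\lambda)}$ for a unique partition $f(\lambda)\vdash n$; moreover, since $S^{(1^n)}\otimes S^{(1^n)}\cong S^{(n)}$, the map $f$ is an involution of $\mathcal{P}_n$. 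It then suffices to prove $f(\lambda)=\lambda^\top$ for every $\lambda$, because the character identity $\chi_{\lambda^\top}=\chi_\lambda\cdot\sgn$ follows at once from $\chi_{U\otimes V}=\chi_U\chi_V$ and $\chi_{S^{(1^n)}}=\sgn$.

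The key step --- which I expect to be the main obstacle --- is to show that $S^{\lambda^\top}$ is a constituent of $M^\lambda\otimes S^{(1^n)}$. Here I would fix a $\lambda$-tableau $T$ with row-stabiliser $R=R_T$, so that $M^\lambda$ is the permutation module on the cosets of $R$ and $M^\lambda\otimes S^{(1^n)}$ is the module induced from the sign character $\sgn|_R$ of $R$; the point is that the transpose tableau $T^\top$ has shape $\lambda^\top$ and its \emph{column}-stabiliser is exactly $R$. Consequently the polytabloid $e_{T^\top}=\kappa_{T^\top}\{T^\top\}$ is a nonzero vector of $S^{\lambda^\top}$ on which $R$ acts by $\sgn$, since $\pi\kappa_{T^\top}=\sgn(\pi)\kappa_{T^\top}$ whenever $\pi$ lies in the column-stabiliser of $T^\top$. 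Thus $\sgn|_R$ occurs in $S^{\lambda^\top}\downarrow R$, and Frobenius reciprocity produces a nonzero --- hence, $S^{\lambda^\top}$ being irreducible, injective --- $S_n$-homomorphism $S^{\lambda^\top}\to M^\lambda\otimes S^{(1^n)}$. (One can instead write this embedding out by hand, sending $\{T^\top\}\mapsto e_T\otimes 1$; the only thing to verify is that the image of $e_{T^\top}$ is nonzero, which reduces to the coefficient of the tabloid $\{T\}$ in $\big(\sum_{\pi\in R_T}\pi\big)e_T$ being $|R_T|$, using $R_T\cap C_T=\{\mathrm{id}\}$. One could also argue via the Young-symmetrizer description of $S^\lambda$ and the algebra automorphism $\sigma\mapsto\sgn(\sigma)\sigma$ of $\mathbb{C}S_n$.)

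Given that step, the rest is short. By Young's Rule (Theorem~\ref{thm:young-rule}) together with the Kostka dominance lemma (Lemma~\ref{lemma:kostka}), $M^\lambda\cong\bigoplus_{\mu\domgeq\lambda}K_{\mu,\lambda}S^\mu$, so $M^\lambda\otimes S^{(1^n)}\cong\bigoplus_{\mu\domgeq\lambda}K_{\mu,\lambda}S^{f(\mu)}$; since $S^{\lambda^\top}$ appears on the right, $\lambda^\top=f(\mu)$ for some $\mu\domgeq\lambda$, and applying the involution $f$ gives $f(\lambda^\top)\domgeq\lambda$ for every $\lambda\in\mathcal{P}_n$. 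Finally, the map $g$ sending $\lambda$ to $f(\lambda^\top)$ is a bijection of $\mathcal{P}_n$ --- a composite of the two involutions $f$ and transposition --- satisfying $g(\lambda)\domgeq\lambda$ for all $\lambda$; and any order-non-decreasing bijection of a finite poset is the identity. Indeed, iterating $g$ at a fixed $\lambda$, the finiteness of the orbit and the injectivity of $g$ give $g^m(\lambda)=\lambda$ for some $m\geq 1$, whence the chain $\lambda\domleq g(\lambda)\domleq\cdots\domleq g^m(\lambda)=\lambda$ collapses and $g(\lambda)=\lambda$. Therefore $f(\lambda^\top)=\lambda$ for all $\lambda$, i.e. $f(\lambda)=\lambda^\top$, as required.
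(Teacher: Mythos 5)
Your argument is correct and complete. Note first that the paper does not actually prove Theorem \ref{thm:transpose-sign}: it is quoted in the background section as a standard fact, following \cite{sagan}, so there is no internal proof to compare against; what you have supplied is a valid self-contained derivation. All the steps check out. Tensoring with the one-dimensional module \(S^{(1^n)}\) preserves invariant subspaces (hence irreducibility) and, because \(S^{(1^n)}\otimes S^{(1^n)}\cong S^{(n)}\), induces an involution \(f\) of \(\mathcal{P}_n\). The identification \(C_{T^{\top}}=R_T\) does exhibit \(e_{T^{\top}}\) as a nonzero vector of \(S^{\lambda^{\top}}\) on which \(R_T\) acts by \(\sgn\), and since \(M^{\lambda}\otimes S^{(1^n)}\cong \textrm{Ind}_{R_T}^{S_n}(\sgn|_{R_T})\) by the projection formula, Frobenius reciprocity embeds \(S^{\lambda^{\top}}\) into \(M^{\lambda}\otimes S^{(1^n)}\); your hands-on alternative is also sound, the key computation being that the coefficient of \(\{T\}\) in \(\bigl(\sum_{\pi\in R_T}\pi\bigr)e_T\) equals \(|R_T|\) because \(R_T\cap C_T=\{\mathrm{id}\}\). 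Combining this with Young's Rule and Lemma \ref{lemma:kostka} correctly yields \(f(\lambda^{\top})\domgeq\lambda\) for every \(\lambda\), and your closing observation --- that a bijection of a finite poset satisfying \(g(\lambda)\domgeq\lambda\) for all \(\lambda\) must be the identity, by the finite-orbit argument --- forces \(f(\lambda^{\top})=\lambda\), i.e.\ \(f(\lambda)=\lambda^{\top}\). Compared with the usual textbook treatment (e.g.\ \cite{sagan}, \cite{JamesKerber}), which reaches the same embedding but then pins down the label by invoking the separate fact that transposition reverses the dominance order, your poset-bijection trick sidesteps that auxiliary lemma entirely; this is a small but genuine economy, at the cost of needing the observation that \(f\) is an involution.
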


If \(\lambda\) is a partition of \(n\), we write \(f^{\lambda} = \dim[\alpha]\) for the dimension of the corresponding irreducible representation. There is a surprisingly simple formula for \(f^{\lambda}\), in terms of the {\em hook-lengths} of the Young diagram of \(\lambda\).

\begin{definition}
  The {\em hook} of a cell $(i,j)$ in the Young diagram of a partition $\lambda$
  is $H_{i,j} = \{ (i,j') : j' \geq j \} \cup \{ (i',j) : i' \geq i \}$, i.e. the set of cells consisting of \((i,j)\), all cells to the right in the same row and all cells below in the same column. The {\em hook-length} of $(i,j)$ is $h_{i,j} = |H_{i,j}|$.
\end{definition}

\begin{theorem}[The Hook formula, \cite{frt}]
\label{hook-formula}
If \(\lambda\) is a partition of \(n\) with hook-lengths $h_1, \ldots, h_k$, then
\begin{equation}\label{eq:hook}f^{\lambda} = \frac{n!}{\prod_i h_i}.\end{equation}
\end{theorem}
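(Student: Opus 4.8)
The plan is to derive the hook formula from the determinantal formula (\ref{eq:determinantalformula}), which is already at our disposal, by specialising it to the identity permutation and then unwinding the resulting determinant. Write $\lambda = (\lambda_1,\ldots,\lambda_r)$ and set $\ell_i = \lambda_i + r - i$, so that $\ell_1 > \ell_2 > \cdots > \ell_r \geq 0$; note that $\ell_i$ is precisely the hook-length of the leftmost cell of row $i$. Evaluating (\ref{eq:determinantalformula}) at $\sigma = \mathrm{id}$ gives $\chi_\lambda(\mathrm{id}) = \dim[\lambda] = f^\lambda$ on the left, while on the right $\xi_\beta(\mathrm{id})$ is the number of $\beta$-tabloids, i.e. the multinomial coefficient $n!/\prod_i(\beta_i!)$; so the signed sum over permutations collapses into the Frobenius--Young formula
\[ f^\lambda = n!\,\det\!\left(\frac{1}{(\lambda_i - i + j)!}\right)_{i,j=1}^{r}, \]
with the convention $1/m! = 0$ for $m<0$ (as in (\ref{eq:determinantalformula})). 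Since $\lambda_i - i + j = \ell_i - r + j$, reversing the order of the columns turns the matrix into $\big(1/(\ell_i - m)!\big)_{i=1,\ldots,r;\ m=0,\ldots,r-1}$; pulling the factor $1/\ell_i!$ out of row $i$ makes the $(i,m)$ entry the monic degree-$m$ polynomial $\ell_i(\ell_i-1)\cdots(\ell_i - m+1)$ in $\ell_i$, so column operations reduce the remaining determinant to the Vandermonde $\prod_{i<j}(\ell_i - \ell_j)$, and the sign of the column reversal cancels the sign coming from the Vandermonde. This yields
\[ f^\lambda = n!\,\frac{\prod_{1\le i<j\le r}(\ell_i - \ell_j)}{\prod_{i=1}^{r}\ell_i!}. \]

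It then remains to prove the purely combinatorial identity $\prod_{\text{cells }(i,j)} h_{i,j} = \prod_{i=1}^{r}\ell_i!\big/\prod_{i<j}(\ell_i - \ell_j)$. The key point is a row-by-row description of the hook-lengths: for each $i$, the hook-lengths $h_{i,1} > h_{i,2} > \cdots > h_{i,\lambda_i}$ of the cells in row $i$ are strictly decreasing (since $h_{i,j} - h_{i,j+1} = 1 + (\lambda^\top_j - \lambda^\top_{j+1}) \geq 1$), they all lie in $\{1,2,\ldots,\ell_i\}$ with $h_{i,1} = \ell_i$, and the $r-i$ values of $\{1,\ldots,\ell_i\}$ \emph{not} occurring among them are exactly $\ell_i - \ell_j$ for $j = i+1,\ldots,r$ (a standard fact, checked by comparing the hook-lengths in row $i$ with the leftmost-cell hook-lengths $\ell_j$ of the lower rows). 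Granting this, $\prod_j h_{i,j} = \ell_i!\big/\prod_{j>i}(\ell_i - \ell_j)$, and taking the product over $i$ gives the identity; substituting into the displayed formula for $f^\lambda$ gives exactly (\ref{eq:hook}).

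The main obstacle is this last combinatorial step --- the identification of the row hook-lengths with the complement inside $\{1,\ldots,\ell_i\}$ of the set $\{\ell_i - \ell_j : j > i\}$; the passage from the determinantal formula to the Vandermonde is essentially formal, only the sign bookkeeping needing a little care. An alternative route avoiding determinants entirely is induction on $n$ via the Branching Rule (Theorem \ref{thm:branching-rule}), which gives $f^\lambda = \sum_\mu f^\mu$ with $\mu$ ranging over the partitions of $n-1$ obtained by deleting a removable corner of $\lambda$; the inductive step then comes down to the rational-function identity $n = \sum_\mu \big(\prod_\lambda h\big)\big/\big(\prod_\mu h\big)$ in the positions of the removable corners. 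I would nonetheless prefer the determinantal route, since verifying that identity is if anything more delicate than the step above.
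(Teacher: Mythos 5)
The paper offers no proof of this statement at all: the hook formula is quoted as a classical background result with a citation to Frame--Robinson--Thrall, so there is no argument of the author's to compare yours against. That said, your derivation is the standard one and is sound in outline. Specialising the determinantal formula (\ref{eq:determinantalformula}) at the identity does give \(f^{\lambda} = n!\det\bigl(1/(\lambda_i - i + j)!\bigr)\), and your reduction to the Vandermonde is correct, including the sign bookkeeping: the column reversal contributes \((-1)^{\binom{r}{2}}\), which cancels against the sign of \(\prod_{i<j}(\ell_j-\ell_i)\) relative to \(\prod_{i<j}(\ell_i-\ell_j)\), and the resulting Frobenius--Young expression \(f^{\lambda} = n!\prod_{i<j}(\ell_i-\ell_j)/\prod_i \ell_i!\) is positive as it must be. The genuine content you have left unproved is exactly the step you flag: that the hook-lengths of row \(i\) together with the numbers \(\ell_i-\ell_j\) \((j>i)\) form the full set \(\{1,\ldots,\ell_i\}\). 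Your counting observation (there are \(\lambda_i + (r-i) = \ell_i\) numbers in play, all lying in \(\{1,\ldots,\ell_i\}\)) reduces this to showing the two lists are disjoint, which still requires an argument --- e.g.\ that as \(j\) ranges over a maximal block of columns with \(\lambda_j^{\top}\) constant the hook-lengths \(h_{i,j}\) form a consecutive run, and the gaps between runs are precisely the values \(\ell_i-\ell_k\). This is a standard lemma and the claim is true, but as written it is asserted rather than established, so the proposal is a correct skeleton rather than a complete proof. Your alternative suggestion (induction via the Branching Rule) would also work but, as you note, trades this lemma for a rational-function identity that is no easier.
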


\section{Preliminary results}
In order to construct a pseudoadjacency matrix of the \(t\)-derangment graph with the right eigenvalues, we will need several preliminary results on the representation theory of \(S_n\).

\subsection*{The Kostka matrix and the permutation character table}
If \(k \in \mathbb{N}\), we let
\[\mathcal{F}_{n,k} =  \{\lambda \vdash n:\ \lambda_1 \geq n-k\}\]
denote the set of all partitions of \(n\) whose Young diagram has first row of length at least \(n-k\). Note that
\[\mathcal{F}_{n,k} = \{\lambda \vdash n:\ \lambda \geq (n-k,1^k)\}.\]
We will need the following fact about the minors of the Kostka matrix and the permutation character table with rows and columns indexed by \(\mathcal{F}_{n,k}\).

\begin{proposition}
\label{prop:independent}
If \(k \in \mathbb{N}\), let \(\tilde{K}\) denote the top-left minor of the Kostka matrix \(K\) with rows and columns indexed by \(\mathcal{F}_{n,k}\), and let \(\tilde{N}\) denote the top-left minor of the permutation character table \(N\) with rows and columns indexed by \(\mathcal{F}_{n,k}\). Then \(\tilde{K}\) and \(\tilde{N}\) do not depend on \(n\), provided \(n > 2k\).
\end{proposition}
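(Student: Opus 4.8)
The plan is to give an $n$-independent labelling of $\mathcal{F}_{n,k}$, and then to show that under this labelling each entry of $\tilde{K}$ and of $\tilde{N}$ is determined by the labels alone, once $n > 2k$. For $\lambda \in \mathcal{F}_{n,k}$ write $\bar\lambda = (\lambda_2,\lambda_3,\dots)$ for the partition obtained by deleting the first row; then $|\bar\lambda| = n - \lambda_1 \le k$. First I would check that, when $n > 2k$, the map $\lambda \mapsto \bar\lambda$ is a bijection from $\mathcal{F}_{n,k}$ onto the set of all partitions of integers $j$ with $0 \le j \le k$, with inverse $\nu \mapsto (n-|\nu|,\nu)$: since $n - |\nu| \ge n-k \ge k+1 > k \ge \nu_1$, the sequence $(n-|\nu|,\nu)$ is a genuine partition and lies in $\mathcal{F}_{n,k}$. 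I would also check that the decreasing lexicographic order on $\mathcal{F}_{n,k}$ corresponds, via this bijection, to the ($n$-free) order on reduced partitions which first compares $|\bar\lambda|$ (a smaller size corresponding to a larger $\lambda_1$, hence a lexicographically larger $\lambda$), breaking ties by decreasing lexicographic order on $\bar\lambda$. It then suffices to prove that, for $\lambda,\mu \in \mathcal{F}_{n,k}$ and $n>2k$, the Kostka number $K_{\lambda,\mu}$ and the value $N_{\lambda,\mu} = \xi_\lambda(\sigma_\mu)$ depend only on the pair $(\bar\lambda,\bar\mu)$.

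For $\tilde{N}$: by the proof of Lemma \ref{lemma:dominance-lemma}, $\xi_\lambda(\sigma_\mu)$ is the number of ways to distribute the cycles of a permutation of cycle-type $\mu$ among the rows of the Young diagram of $\lambda$ so that the cycle-lengths assigned to a row sum to that row's length. Since $\mu_1 \ge n-k$ and $n>2k$, we have $\mu_1 > k \ge \lambda_i$ for all $i \ge 2$, so the unique cycle of length $\mu_1$ can only go in the first row; in particular $\xi_\lambda(\sigma_\mu)=0$ unless $\lambda_1 \ge \mu_1$, consistently with $\mu \domleq \lambda$. Having placed the long cycle, the remaining choices are exactly the ways to distribute the cycles of type $\bar\mu$ among the ``slots'' consisting of the $\lambda_1-\mu_1$ leftover cells of row $1$ together with rows $2,3,\dots$ of sizes $\bar\lambda$. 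This count depends only on $\bar\mu$ and on the multiset $\{\lambda_1-\mu_1\}\cup\bar\lambda$ of slot sizes, and $\lambda_1-\mu_1 = |\bar\mu|-|\bar\lambda|$ depends only on $(\bar\lambda,\bar\mu)$; hence $\tilde{N}$ is independent of $n$ for $n>2k$.

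For $\tilde{K}$: $K_{\lambda,\mu}$ counts semistandard generalized $\lambda$-tableaux of content $\mu$. Since entries strictly increase down columns and $\mu_1 \ge n-k > k \ge \lambda_i$ $(i\ge2)$, all $\mu_1$ copies of $1$ must occupy the leftmost $\mu_1$ cells of row $1$ (so again $K_{\lambda,\mu}=0$ unless $\lambda_1\ge\mu_1$, matching Lemma \ref{lemma:kostka}). Deleting those $1$'s and subtracting $1$ from every other entry is a bijection onto the semistandard skew tableaux of shape $\lambda/(\mu_1)$ with content $\bar\mu$, so $K_{\lambda,\mu}$ equals the skew Kostka number $K_{\lambda/(\mu_1),\bar\mu}$. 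But since rows $2,3,\dots$ have length $\le k < \mu_1$, the skew shape $\lambda/(\mu_1)$ is the disjoint union, sharing no column, of a single row of length $\lambda_1-\mu_1 = |\bar\mu|-|\bar\lambda|$ and the straight shape $\bar\lambda$; hence its semistandard fillings factor as a product over these two pieces, and the resulting count depends only on $(\bar\lambda,\bar\mu)$. So $\tilde{K}$ is independent of $n$ for $n>2k$, and combining the three steps proves the proposition.

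The main obstacle, I expect, is precisely this ``decoupling'' in the last two paragraphs: one must use $n>2k$ to force the long cycle of $\sigma_\mu$ (respectively, all the $1$'s prescribed by the content $\mu$) into the first row, and then see that the first row is combinatorially independent of the rest, so the residual count is governed entirely by the $n$-independent reduced partitions $\bar\lambda$ and $\bar\mu$. The index bijection and its compatibility with lexicographic order are routine. One could instead try to derive the stability of $\tilde{K}$ from that of $\tilde{N}$ via Young's rule (Theorem \ref{thm:young-rule}), which over $\mathcal{F}_{n,k}$ reads $\tilde{N} = \tilde{K}^{\top}\tilde{X}$ with $\tilde{X}$ the corresponding minor of the irreducible-character table, together with the strict upper-triangularity of $K$ and $N$ (Corollaries \ref{corr:uppertriangular} and \ref{corr:permuppertriangular}); but that route still requires separately showing $\tilde{X}$ is $n$-independent, which is no easier than the direct argument above.
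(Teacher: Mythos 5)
Your proof is correct and follows essentially the same route as the paper's: both arguments use \(n>2k\) to force the long cycle of \(\sigma_\mu\) (respectively all the \(1\)'s prescribed by the content \(\mu\)) into the first row, and then observe that the residual count is governed by \(n\)-independent reduced data. Your write-up is somewhat more explicit about the index bijection, the order compatibility, and the decoupling of the leftover first-row cells from the lower rows, but these are points the paper's proof implicitly relies on rather than a genuinely different method.
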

\begin{proof}
Let $\lambda \geq
  (n-k,1^k)$ be a partition. Then $\lambda_1 \geq n-k$.  Write
  $\lambda' = (\lambda_1 - (n-k), \lambda_2, \ldots)$. (Note that this may not be a {\em bona fide} partition, as it may not be in non-increasing order.) Now the
  mapping $\lambda \mapsto \lambda'$ has the same image over $\{
  \lambda : \lambda \geq (n-k,1^k) \}$ for all $n \geq 2k$:
  namely, `partitions' of $k$ where the first row is not necessarily the
  longest.

We first consider \(K\). Recall that $K_{\lambda,\mu}$ is the number of semistandard \(\lambda\)-tableaux of content $\mu$. Let $T$ be a semistandard
  \(\lambda\)-tableau of content $\mu$; we now count the number of choices for \(T\). Since the numbers in a semistandard
  tableau are strictly increasing down each column and non-increasing along each row, and \(\mu_1 \geq n-k\), we must always place 1's in the first \(n-k\) cells of the first row of \(T\). We must now fill the rest of the cells with content \(\mu'\). Provided \(n \geq 2k\), \(\mu'\) is independent of \(n\), and the remaining cells in the first row have no cells below them, so the number of ways of doing this is independent of \(n\). Hence, the entire minor \(\tilde{K}\) is independent of $n$.

  Now consider $N$. Recall that \(N_{\lambda,\mu} = \xi_{\lambda}(\sigma_{\mu})\) is the number of \(\lambda\)-tabloids fixed by $\sigma_{\mu}$. To count these, first note that the numbers in the long cycle
  of $\sigma$ (which has length at least \(n-k\)) must all be in the first row of the \(\lambda\)-tabloid (otherwise
  the long cycle of $\sigma$ must intersect two or more rows, as \(n-k > k\).)
  This leaves us with a $(\lambda_1 - \mu_1, \lambda_2,
  \ldots, \lambda_r)$-`tableau', which we need to fill with the remaining $n -
  \mu_1$ elements in such a way that $\sigma$ fixes it.  It is
  easy to see that, again, the number of ways of doing this is independent of \(n\).
\end{proof}

\begin{remark}
By Corollaries \ref{corr:permuppertriangular} and \ref{corr:uppertriangular}, \(\tilde{K}\) and \(\tilde{N}\) are strictly upper-triangular, and are therefore invertible.
\end{remark}

\begin{remark}
If \(n \geq 2k\), then \(|\mathcal{F}_{n,k}|\) is independent of \(n\); we denote it by \(q_k\). Note that
\[q_k = \sum_{s=0}^{k}p_s,\]
where \(p_s\) denotes the number of partitions of \(s\) (and \(p_0 :=1\)).
\end{remark}

\subsection*{The critical subspace of \(\mathbb{C}S_n\)}
Let \(\alpha\) be a partition of \(n\). Using the notation of Theorem \ref{thm:normalcayley}, \(U_{S^{\alpha}}\) denotes the sum of all submodules of \(\mathbb{C}S_n\) which are isomorphic to \(S^{\alpha}\). Recall that \(U_{S^\alpha}\) is a 2-sided ideal of \(\mathbb{C}S_n\). For brevity, we write \(U_{\alpha}\) in place of \(U_{S^{\alpha}}\), and refer to it as the {\em \(\alpha\)-isotypical ideal}. Observe that \(U_{(n)}\) is the subspace of constant vectors in \(\mathbb{C}S_n\).

Note that the permutation action of \(S_n\) on \(t\)-sets is the same as its action on \((n-t,t)\)-tabloids; the corresponding permutation representation is \(M^{(n-t,t)}\). Not surpringly, this representation and its irreducible constituents play a crucial role in our proof. We have the following
\begin{lemma}
\label{lemma:decomp}
For any positive integers \(t \leq n\),
\begin{equation} \label{eq:tsetdecomposition} M^{(n-t,t)} \cong \bigoplus_{s=0}^{t} S^{(n-s,s)}.\end{equation}
\end{lemma}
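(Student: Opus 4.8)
The plan is to use Young's Rule (Theorem~\ref{thm:young-rule}) to decompose $M^{(n-t,t)}$, which reduces the statement to a combinatorial identity about Kostka numbers: I must show that $K_{\lambda,(n-t,t)} = 1$ if $\lambda = (n-s,s)$ for some $0 \leq s \leq t$, and $K_{\lambda,(n-t,t)} = 0$ for all other partitions $\lambda$ of $n$. First I would recall from Lemma~\ref{lemma:kostka} that $K_{\lambda,(n-t,t)} \neq 0$ forces $\lambda \domgeq (n-t,t)$; since $n - t \geq t$ (as $t \leq n$... actually I should assume $n \geq 2t$, or note the statement still holds by a direct check otherwise), this already restricts $\lambda$ to have first row of length at least $n-t$, i.e. $\lambda \in \mathcal{F}_{n,t}$, so $\lambda = (n-t+j, \mu_2, \mu_3, \ldots)$ where $(j, \mu_2, \mu_3, \ldots)$ rearranges to a partition of $t$.

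Next I would count semistandard generalized $\lambda$-tableaux of content $(n-t,t)$ directly. Such a tableau has $n-t$ cells labelled $1$ and $t$ cells labelled $2$. Since entries strictly increase down columns, each column contains at most one $1$ and at most one $2$, hence each column has length at most $2$, so $\lambda$ has at most two rows: $\lambda = (n-s, s)$ for some $s$ with $0 \leq s \leq t$ (the bound $s \leq t$ because the $s$ cells in row two cannot all be $1$'s, so at least... in fact all $s$ cells of row two must be $2$'s, forcing $s \leq t$; and dominance forces $n - s \geq n - t$, i.e. $s \leq t$ again). Now for $\lambda = (n-s,s)$ the semistandard filling is forced: row two must be all $2$'s (a $1$ below a $1$ in row one would violate strict column increase only if the cell above is also a $1$, which it is, since... let me just say: entries weakly increase along rows and strictly increase down columns, and row one begins with $1$'s; the cell directly above any row-two cell lies in row one; if a row-two cell were a $1$ then the row-one cell above it is a $1$ too, contradicting strict increase down the column), so row two is $\underbrace{2\cdots2}_{s}$, and row one is $\underbrace{1\cdots1}_{n-t}\underbrace{2\cdots2}_{t-s}$, which is indeed semistandard. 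Hence $K_{(n-s,s),(n-t,t)} = 1$ for each $0 \leq s \leq t$ and $0$ otherwise, and plugging into Young's Rule gives \eqref{eq:tsetdecomposition}.

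I do not expect any serious obstacle here; the only mild subtlety is handling the edge case where $n < 2t$ (so that $(n-t,t)$ might not be weakly decreasing, or $n-t < t$), but the problem statement presumably intends $n \geq 2t$, and in any case the column-length argument above — each column has at most two cells because it holds at most one $1$ and one $2$ — is robust and needs no such hypothesis for the conclusion that $\lambda$ has at most two rows. An alternative, essentially equivalent route would be to invoke the classical fact that $M^{(n-t,t)}$ is a multiplicity-free permutation module whose constituents are exactly the $S^{(n-s,s)}$, $0 \leq s \leq t$ (the permutation character $\xi_{(n-t,t)}$ is a sum of distinct irreducibles), but I prefer the self-contained Kostka-number computation since all the needed machinery (Young's Rule, Lemma~\ref{lemma:kostka}) is already available in the text.
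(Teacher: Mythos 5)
Your proposal is correct and follows essentially the same route as the paper: apply Young's Rule, use Lemma~\ref{lemma:kostka} (dominance) to restrict the possible constituents to the two-row partitions $(n-s,s)$ with $s \leq t$, and then observe that the semistandard filling of content $(n-t,t)$ is forced (second row all $2$'s, first row $1$'s then $2$'s), giving Kostka number $1$. The extra column-length argument and the remark on the $n<2t$ edge case are harmless additions; the paper's proof is just a terser version of the same computation.
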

\begin{proof}
By Lemma \ref{lemma:kostka}, if \(K_{\lambda,(n-t,t)} \geq 1\), then \(\lambda \domgeq (n-t,t)\), so \(\lambda = (n-s,s)\) for some \(s \in \{0,1,2,\ldots,t\}\). For any \(s \in \{0,1,2,\ldots,t\}\), we have \(K_{(n-s,s),(n-t,t)} = 1\), since there is a unique semistandard generalized \((n-s,s)\)-tableau of content \((n-t,t)\) --- namely, the tableau with \(1\)'s in the first \(n-t\) cells of the first row, and \(2\)'s in all the other cells. Hence, (\ref{eq:tsetdecomposition}) follows from Theorem \ref{thm:young-rule}.
\end{proof}

\begin{corollary}
\label{corr:dims}
For any positive integers \(t \leq n\), we have
\[\xi_{(n-t,t)} = \sum_{s=0}^{t} \chi_{(n-s,s)}.\]
Hence,
\[\chi_{(n-t,t)} = \xi_{(n-t,t)}-\xi_{(n-t+1,t-1)},\]
so in particular,
\[f^{(n-t,t)} = {n \choose t}-{n \choose t-1}.\]
\end{corollary}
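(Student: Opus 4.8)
The plan is to deduce all three identities directly from Lemma~\ref{lemma:decomp} by passing to characters and then, for the last identity, evaluating at the identity permutation.

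For the first identity, I would simply take the character of both sides of the module isomorphism \(M^{(n-t,t)} \cong \bigoplus_{s=0}^{t} S^{(n-s,s)}\) furnished by Lemma~\ref{lemma:decomp}. Recalling that isomorphic representations have equal characters and that characters are additive over direct sums, and using our notational conventions \(\xi_{\mu} = \chi_{M^{\mu}}\) and \(\chi_{\lambda} = \chi_{S^{\lambda}}\), this yields \(\xi_{(n-t,t)} = \sum_{s=0}^{t} \chi_{(n-s,s)}\) at once.

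For the second identity, I would apply the first identity twice --- once with \(t\) and once with \(t-1\), noting that \((n-t+1,t-1)\) is exactly \((n-s,s)\) with \(s=t-1\) --- and subtract. The two sums \(\sum_{s=0}^{t} \chi_{(n-s,s)}\) and \(\sum_{s=0}^{t-1} \chi_{(n-s,s)}\) differ only in the \(s=t\) term, so \(\xi_{(n-t,t)} - \xi_{(n-t+1,t-1)} = \chi_{(n-t,t)}\). (When \(t=1\) one reads \((n-t+1,t-1) = (n)\) and \(\xi_{(n)} = \chi_{(n)}\) is the trivial character, and the identity still holds.)

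For the dimension formula, I would evaluate the second identity at the identity permutation. On the left, \(\chi_{(n-t,t)}(\textrm{Id}) = \dim S^{(n-t,t)} = f^{(n-t,t)}\). On the right, \(\xi_{\mu}(\textrm{Id}) = \dim M^{\mu} = |X^{\mu}|\), the number of \(\mu\)-tabloids (since the identity fixes every tabloid); as an \((n-a,a)\)-tabloid is determined by the \(a\)-subset of \([n]\) occupying its second row, we get \(\xi_{(n-t,t)}(\textrm{Id}) = {n \choose t}\) and \(\xi_{(n-t+1,t-1)}(\textrm{Id}) = {n \choose t-1}\). Subtracting gives \(f^{(n-t,t)} = {n \choose t} - {n \choose t-1}\). (One could alternatively confirm this via the hook-length formula of Theorem~\ref{hook-formula}, but the tabloid count is quicker.) I do not anticipate any genuine obstacle here: the only points needing attention are the notational translation between the modules \(M^{\mu}, S^{\lambda}\) and their characters \(\xi_{\mu}, \chi_{\lambda}\), and the degenerate case \(t=1\) in the second identity, both of which are routine.
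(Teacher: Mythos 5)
Your proposal is correct and is exactly the argument the paper intends: the corollary is stated as an immediate consequence of Lemma~\ref{lemma:decomp}, obtained by taking characters of the isomorphism, telescoping the sums for \(t\) and \(t-1\), and evaluating at the identity (where \(\xi_{(n-a,a)}(\textrm{Id})\) counts \((n-a,a)\)-tabloids, i.e.\ \(a\)-subsets of \([n]\)). No gaps.
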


We call a partition of \(n\) {\em critical} if it is of the form \((n-s,s)\) for some \(s \in \{0,1,\ldots,t\}\), i.e. if the corresponding irreducible representation is a constituent of the permutation module \(M^{(n-t,t)}\). Let \(\mathcal{C}_{n,t}\) denote the set of all critical partitions of \(n\), and let
\[U_t = \bigoplus_{s=0}^{t} U_{(n-s,s)}\]
denote the direct sum of the corresponding isotypical ideals; we call \(U_t\) the {\em critical subspace} of \(\mathbb{C}S_n\). Our aim in this section is to show that \(U_t\) is precisely the subspace spanned by the characteristic functions of the \(t\)-SSC's (the cosets of stabilizers of \(t\)-sets). For this, we will need the following well-known fact from general representation theory.
\begin{lemma}
\label{lemma:righttranslation}
If $G$ is a finite group, and $M,M'$ are two isomorphic submodules of $\mathbb{C}G$, then there exists $s \in \mathbb{C}G$ such that the right multiplication map $x \mapsto xs$ is an isomorphism from $M$ to $M'$.
\end{lemma}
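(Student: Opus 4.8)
The plan is to use the standard structure theory of the group algebra $\mathbb{C}G$ — the Artin--Wedderburn decomposition — but in the self-contained language of Theorem~\ref{thm:normalcayley} and the remark following it. First I would reduce to the case where $M$ and $M'$ both lie in a single isotypical ideal $U_i$ (the sum of all submodules of $\mathbb{C}G$ isomorphic to a fixed irreducible $V_i$): indeed, since $M \cong M' \cong V_i$ for some $i$ (as $M$ is irreducible, being isomorphic to a submodule sitting inside $\mathbb{C}G$ — or, if one allows reducible $M$, decompose into isotypic pieces and treat each separately and then diagonally), both $M$ and $M'$ are contained in $U_i$, because $U_i$ is by definition the sum of \emph{all} copies of $V_i$ in $\mathbb{C}G$.

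Next I would exploit that $U_i$ is a two-sided ideal of $\mathbb{C}G$ (the Remark after Theorem~\ref{thm:normalcayley}), so in particular it is closed under right multiplication, and it is itself a ring: it has an identity element $e_i$ (the central idempotent projecting $\mathbb{C}G$ onto $U_i$), and as a ring $U_i \cong \mathrm{End}_{\mathbb{C}}(V_i) \cong M_{d}(\mathbb{C})$ where $d = \dim V_i$, with $U_i$ decomposing as a left module into $d$ copies of $V_i$. The key algebraic fact I would invoke is that in a matrix algebra $M_d(\mathbb{C})$, any two minimal left ideals are related by right multiplication by a suitable element: if $L = M_d(\mathbb{C})p$ and $L' = M_d(\mathbb{C})p'$ for rank-one idempotents $p, p'$, then since $L \cong L'$ as left modules there is an invertible intertwiner, which by a Schur-type argument is realized as $x \mapsto xq$ for an appropriate $q \in M_d(\mathbb{C})$ (concretely, $q = p u p'$ for a suitable unit $u$). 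Transporting this back through the isomorphism $U_i \cong M_d(\mathbb{C})$, and then extending $q$ from $U_i$ to all of $\mathbb{C}G$ by setting $s = q$ (viewed inside $\mathbb{C}G$ via $U_i \subset \mathbb{C}G$, which is legitimate since right multiplication by any element of $U_i$ kills every other isotypical ideal and acts as multiplication by $q$ on $U_i$), I get that $x \mapsto xs$ restricts to an isomorphism $M \to M'$.

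Alternatively, and perhaps more cleanly, I would argue directly via Schur's lemma without invoking matrix units: fix a $\mathbb{C}G$-module isomorphism $\phi : M \to M'$. Both $M$ and $M'$ are left ideals of the ring $U_i$ generated by idempotents, say $M = U_i p$ and $M' = U_i p'$ with $p^2 = p$, $(p')^2 = p'$; then $\phi$ is determined by $\phi(p) =: m' \in M'$, and one checks that $\phi(x) = \phi(xp) = x\,\phi(p) = x m'$ for all $x \in M$, so right multiplication by $s := m'$ already does the job on $M$ — the only subtlety is to confirm this $s$ indeed maps $M$ \emph{onto} $M'$ and not just into it, which follows because $\phi$ is surjective. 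One then takes this $s \in U_i \subseteq \mathbb{C}G$.

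The main obstacle I anticipate is purely bookkeeping: making precise that $M$ is a left ideal of the form $U_i p$ for an idempotent $p$ (this uses that $\mathbb{C}G$ is semisimple, so the complement of $M$ in $U_i$ gives an idempotent), and verifying that the right-multiplication map $x \mapsto xs$, a priori a map on all of $\mathbb{C}G$, restricts correctly — in particular that $Ms = M'$ exactly. None of this is deep, but one must be careful about left-versus-right conventions (we compose permutations right-to-left, and $\mathbb{C}G$ acts on itself by left multiplication, so right multiplication is a module endomorphism). There are no serious difficulties beyond this; the lemma is a soft consequence of semisimplicity and Schur's lemma.
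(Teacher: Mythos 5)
Your second argument --- fix a module isomorphism $\phi\colon M\to M'$, write $M=\mathbb{C}Gp$ for an idempotent $p$ (using semisimplicity of $\mathbb{C}G$), and take $s:=\phi(p)$, so that $\phi(x)=\phi(xp)=x\,\phi(p)=xs$ for all $x\in M$ and $Ms=\phi(M)=M'$ --- is correct and is precisely the standard proof that the paper outsources to \cite{jamesliebeck}; it needs no reduction to a single isotypical ideal, so the Wedderburn machinery of your first paragraph is unnecessary. The one blemish is the parenthetical assertion that a submodule of $\mathbb{C}G$ is automatically irreducible (false --- e.g.\ $\mathbb{C}G$ itself), but you hedge this correctly and the second route does not rely on it.
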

For a proof, see for example \cite{jamesliebeck}.

We can now prove the main result of this section.
\begin{theorem}
\label{thm:cosets-span}
For any \(n\in \mathbb{N}\) and any \(t \leq n\), we have
\[U_{t} = \Span\{1_{T_{x \mapsto y}}:\ x,y \in [n]^{(t)}\}.\]
\end{theorem}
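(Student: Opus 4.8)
The plan is to establish the two inclusions separately, using Theorem~\ref{thm:normalcayley} together with the action of $S_n$ on $t$-sets.

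First I would show that $\Span\{1_{T_{x \mapsto y}}\} \subseteq U_t$. The key observation is that the left regular representation of $S_n$ restricted to the span $W := \Span\{1_{T_{x \mapsto y}}:\ x,y \in [n]^{(t)}\}$ is, as a $\mathbb{C}S_n$-module, a quotient (in fact a submodule, by complete reducibility) of the permutation module on ordered pairs $(x,y)$ of $t$-sets with $\sigma$ mapping $x$ to $y$; more precisely, fixing $y$ and letting $x$ range gives a copy of $M^{(n-t,t)}$ inside $\mathbb{C}S_n$ under left translation, since the map $\sigma \mapsto \sigma x_0$ identifies the $t$-set $\sigma(x_0)$ with a $(n-t,t)$-tabloid and $1_{T_{x \mapsto y}}$ is (up to indicator bookkeeping) the pullback of a tabloid indicator. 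Hence every irreducible constituent of $W$ is a constituent of $M^{(n-t,t)}$, which by Lemma~\ref{lemma:decomp} means every irreducible constituent of $W$ is some $S^{(n-s,s)}$ with $0 \le s \le t$. Since $U_t = \bigoplus_{s=0}^t U_{(n-s,s)}$ is exactly the $(n-s,s)$-isotypical part of $\mathbb{C}S_n$ for $0 \le s \le t$, and $U_t$ is a two-sided ideal, this forces $W \subseteq U_t$.

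Next I would prove the reverse inclusion $U_t \subseteq W$. Here I would argue one isotypical block at a time: it suffices to show $U_{(n-s,s)} \subseteq W$ for each $s \in \{0,1,\ldots,t\}$. Consider the subspace $V_s := \Span\{1_{T_{x \mapsto y_0}}:\ x \in [n]^{(t)}\}$ for a fixed $t$-set $y_0$. Under left translation this is isomorphic to $M^{(n-t,t)} \cong \bigoplus_{s=0}^t S^{(n-s,s)}$, so $V_s$ already contains a full copy of each $S^{(n-s,s)}$; by Lemma~\ref{lemma:righttranslation}, applying suitable right multiplications to this copy of $S^{(n-s,s)}$ sweeps out all submodules of $\mathbb{C}S_n$ isomorphic to $S^{(n-s,s)}$, i.e.\ all of $U_{(n-s,s)}$. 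The point is that $W$ is closed under right translation by $S_n$ — indeed $1_{T_{x \mapsto y}} \cdot \pi^{-1}$ (right translation) equals $1_{T_{\pi(x') \mapsto y}}$ for appropriate $x'$, up to relabelling, so right translation permutes the spanning set — and more generally $W$ is a right ideal, hence closed under right multiplication by all of $\mathbb{C}S_n$. Therefore $W \supseteq U_{(n-s,s)}$ for every $s \le t$, giving $W \supseteq U_t$.

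The main obstacle, and the step requiring the most care, is the bookkeeping in the first paragraph: making precise that $W$, as an abstract $\mathbb{C}S_n$-module under left translation, has its irreducible constituents among $\{S^{(n-s,s)}: s \le t\}$. One has to be careful that $1_{T_{x \mapsto y}}$ is not literally a tabloid indicator but rather a sum of indicators of $t$-cosets, and that the map sending a $t$-set $z$ to $1_{\{\sigma:\ \sigma(x_0) = z\}}$ genuinely intertwines the $S_n$-action on tabloids with left translation on $\mathbb{C}S_n$. Once that intertwiner is set up correctly, Lemma~\ref{lemma:decomp} and the two-sided ideal structure of the $U_\alpha$ do all the remaining work, and the argument is short.
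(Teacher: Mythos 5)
Your proposal follows essentially the same route as the paper's proof: exhibit a copy of the permutation module $M^{(n-t,t)}$ inside the span of the $1_{T_{x\mapsto y}}$'s, deduce one inclusion from Lemma~\ref{lemma:decomp} together with the fact that $U_t$ is a two-sided ideal, and deduce the other from Lemma~\ref{lemma:righttranslation} together with closure of the span under right translation. One bookkeeping point — precisely the one you flag as delicate — is backwards as written: with the paper's conventions, left multiplication by $\pi$ sends $1_{T_{x\mapsto y}}$ to $1_{T_{x\mapsto \pi(y)}}$ and right multiplication changes $x$, so the left submodule isomorphic to $M^{(n-t,t)}$ is $\Span\{1_{T_{x_0\mapsto y}}:\ y\in [n]^{(t)}\}$ for a \emph{fixed} $x_0$, not your $V_s$ with $y_0$ fixed and $x$ varying, which is a right submodule and is not preserved by left translation. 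Transposing the roles of $x$ and $y$ (equivalently, of left and right) throughout repairs this, and the remainder of your argument is then exactly the paper's.
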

\begin{proof}
Observe that the set of vectors
\[\left\{\sum_{\sigma \in S_n:\atop \sigma([t])=y} \sigma\quad :\ y \in [n]^{(t)}\right\}\]
is a basis for a copy $W$ of the permutation module $M^{(n-t,t)}$ in the group module $\mathbb{C}S_{n}$. Let $V$ be the sum of all right translates of $W$, i.e. the subspace of $\mathbb{C}S_{n}$ spanned by all the \(1_{T_{x \mapsto y}}\)'s. We wish to prove that $V=U_t$.

By Lemma \ref{lemma:decomp}, we have $W \leq U_{t}$. Since $U_{t}$ is closed under right-multiplication by elements of $\mathbb{C}S_{n}$, we have $V \leq U_{t}$ as well.

It remains to show that \(U_t \leq V\). By Lemma \ref{lemma:righttranslation}, the sum of all right translates of $W$ contains all submodules of $\mathbb{C}S_{n}$ isomorphic to $S^{(n-s,s)}$, for each \(s \in \{0,1,\ldots,t\}\). In other words, $U_{(n-s,s)} \leq V\) for each \(s \in \{0,1,\ldots,t\}\). Hence, $U_{t} \leq V$, so $V=U_{t}$, as required.
\end{proof}

\subsection*{Lower bounds on the dimensions of Specht modules}
We will also need various lower bounds on the dimensions of Specht modules. The following result was proved in \cite{jointpaper}; we repeat the proof for the reader's convenience.
\begin{lemma} \label{lemma:dimension-for-long}
  Let $\alpha$ be a partition of \(n\) with \(\alpha_1=n-t\). Then
  $$f^{\alpha} > e^{-t} {n \choose t}.$$
\end{lemma}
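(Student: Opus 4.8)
The plan is to use the hook-length formula (Theorem \ref{hook-formula}) to get a lower bound on $f^\alpha$ for a partition $\alpha$ of $n$ with $\alpha_1 = n-t$. The key observation is that the cells lying in the \emph{first row} and \emph{first column} of $\alpha$ carry hook-lengths that are relatively large (comparable to $n$), whereas all the remaining cells lie in the sub-diagram $\beta$ obtained by deleting the first row and column; since $\alpha$ has $n-t$ cells in its first row, the diagram $\beta$ has at most $t$ cells, so the product of the hook-lengths coming from $\beta$-cells is bounded by a function of $t$ alone. Concretely, I would write $\prod_i h_i = \left(\prod_{(i,j)\in \text{arm/leg}} h_{i,j}\right)\cdot\left(\prod_{(i,j)\in\beta} h_{i,j}\right)$, bound the second factor crudely, and estimate the first factor carefully.

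First I would handle the hooks of the first-row cells: the cell $(1,j)$ has hook-length $h_{1,j} = (\alpha_1 - j) + \lambda'_j$ where $\lambda'_j \geq 1$ is the $j$-th column length, so $h_{1,j} \leq (n-t-j) + (t+1) = n-j+1$ for the first $n-t$ columns (using that every column outside the first row has length at most $t$, hence $\lambda'_j \le t+1$). This gives $\prod_{j=1}^{n-t} h_{1,j} \le \frac{n!}{t!}\cdot(\text{correction})$; more usefully, since the cell $(1,j)$ for $j\le n-t$ has hook-length \emph{at least} $n-t-j+1$, I actually want an \emph{upper} bound on $\prod_i h_i$ to lower-bound $f^\alpha = n!/\prod_i h_i$. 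So I would argue: the $n-t$ cells in row 1 contribute hooks that are each at most roughly $n$, the cells in the first column below row 1 (there are at most $t$ of them) contribute hooks at most $n$, and the at-most-$t$ remaining cells contribute hooks at most $t$. A clean way is to compare $\alpha$ with the partition $(n-t,1^t)$ or to use the standard trick: $\prod_i h_i \le n(n-1)\cdots(n-t+1)\cdot t^{2t}$ or similar, whence $f^\alpha \ge \frac{n!}{n(n-1)\cdots(n-t+1)}\cdot\frac{1}{t^{2t}} = \frac{(n-t)!}{t^{2t}}$, which is far weaker than needed. To reach the stated $e^{-t}\binom{n}{t}$ I need to be more precise, keeping the product of row-1 hooks close to $(n-t)!\cdot$(something like $\binom{n}{t}$-sized), and I would match the hooks of $\alpha$ to those of $(n-t,t)$ or $(n-t,1^t)$ to get the constant $e^{-t}$ via the inequality $\prod_{j=1}^{t}(1 - j/n) \ge \prod_{j=1}^t e^{-2j/n} \ge e^{-t}$ or by a direct comparison with a known exact value of $f^{(n-t,t)}$ from Corollary \ref{corr:dims}.

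The main obstacle will be getting the constant right — specifically, squeezing the estimate down to the clean bound $e^{-t}\binom{n}{t}$ rather than a bound with a worse dependence on $t$. The natural route is to compare $f^\alpha$ directly with $f^{(n-t,t)} = \binom{n}{t} - \binom{n}{t-1}$: among all partitions of $n$ with first part $n-t$, one shows $f^\alpha \geq f^{(n-t,t)}$ (or is within a controlled factor), by checking that replacing the sub-diagram $\beta$ of $\alpha$ by a single row of length $t$ only decreases hook-lengths in a controlled way. Then the elementary inequality $\binom{n}{t} - \binom{n}{t-1} = \binom{n}{t}\bigl(1 - \tfrac{t}{n-t+1}\bigr)$, combined with $\bigl(1-\tfrac{t}{n-t+1}\bigr) \ge e^{-t}$ for $n$ at all large, finishes it — though for small $n$ one must check the bound separately or absorb it into the "$n$ sufficiently large" hypothesis implicit in the statement's use. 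I would expect the write-up to spend most of its effort on the comparison of hook-length products between $\alpha$ and the extremal shape $(n-t,t)$.
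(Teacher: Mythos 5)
There is a genuine gap: the quantitative heart of the lemma --- an upper bound of $(n-t)!\,e^t$ on the product of the first-row hook-lengths --- is never actually established. Your direct estimate, as you yourself note, only yields something like $(n-t)!/t^{2t}$ for $f^\alpha$, which is far too weak. Your fallback is to assert that $f^\alpha \geq f^{(n-t,t)}$ over all $\alpha$ with $\alpha_1 = n-t$ ``by checking that replacing the sub-diagram $\beta$ by a single row of length $t$ only decreases hook-lengths in a controlled way'', but this is not a proof: replacing $\beta$ by a row changes the column lengths $\alpha'_j$ and hence moves the first-row hooks in both directions, and the resulting comparison of hook products is itself at least as delicate as the lemma. (The minimality of $f^{(n-t,t)}$ among these shapes is plausible and consistent with small cases, but you would have to prove it, and that is where all the work lies.) A secondary issue is that your closing inequality is invoked only ``for $n$ at all large'', whereas the lemma carries no such hypothesis; in fact $1-\tfrac{t}{n-t+1}\geq \tfrac{1}{t+1}>e^{-t}$ whenever the shape exists, but you would need to say so.

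The paper's proof closes exactly this gap with two clean observations. First, deleting the first row leaves the Young diagram of a partition of $t$, whose cells have the same hooks in $\alpha$ as in that smaller diagram; the hook formula applied within $S_t$ then gives $\prod_{(i,j)\notin R_1} h_{i,j} \leq t!$ (sharper than your $t^{2t}$-type bound, though that is not the main issue). Second --- and this is the missing idea --- the first-row hooks, indexed from the right, have the form $j+c_j$ for $j=1,\dots,n-t$ with $c_j\geq 0$ and $\sum_j c_j = t$ (each cell below the first row contributes $1$ to exactly one first-row hook), and AM/GM gives
\[
\prod_{j=1}^{n-t}\Bigl(1+\frac{c_j}{j}\Bigr) \leq \Bigl(1+\frac{t}{n-t}\Bigr)^{n-t} < e^t ,
\]
so the first-row product is $<(n-t)!\,e^t$ and $f^\alpha > n!/(t!(n-t)!\,e^t) = e^{-t}\binom{n}{t}$ for every admissible $n$. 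You should replace the unproved comparison with $(n-t,t)$ by this direct estimate.
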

\begin{proof}
By the Hook formula, (\ref{eq:hook}), it suffices to prove that the product of the hook-lengths of \(\alpha\) satisfies
\[\prod_{i,j} h_{i,j} < t!(n-t)! e^{t}.\]
Delete the first row \(R_1\) of the Young diagram of \(\alpha\); the resulting Young diagram \(D\) corresponds to a partition of \(t\), and therefore a representation of \(S_t\), which has dimension
\[\frac{t!}{\prod_{(i,j) \in D} h_{i,j}} \geq 1.\]
Hence,
\[\prod_{(i,j) \in D} h_{i,j} \leq t!.\]
We now bound the product of the hook-lengths of the cells in the first row; this is of the form
\[\prod_{(i,j) \in R_1} h_{i,j} = \prod_{j=1}^{n-t}(j+c_j),\]
where \(\sum_{j=1}^{n-t}c_j = t\). Using the AM/GM inequality, we obtain:
\begin{eqnarray*}\prod_{j=1}^{n-t} \frac{j+c_j}{j}
    & = &
    \prod_{j=1}^{n-t} \left( 1 + \frac{c_j}{j} \right) \\
     & \leq &
    \left( \sum_{j=1}^{n-t} \frac{1 + \frac{c_j}{j}}{n-t} \right)^{n-t} \\
    & \leq &
    \left( \frac{n-t + \sum_{j=1}^{n-t} c_j}{n-t} \right)^{n-t} \\
    & = &
    \left( 1 + \frac{t}{n-t} \right)^{n-t}\\
 & < & e^t.
  \end{eqnarray*}
Hence,
\[\prod_{i,j} h_{i,j} < t!(n-t)!e^{t},\]
as desired.
\end{proof}
Let \(t \in \mathbb{N}\) be fixed. As above, we let
\[\mathcal{F}_{n,t} =  \{\alpha \vdash n:\ \alpha_1 \geq n-t\}\]
denote the set of all partitions of \(n\) whose Young diagram has first row of length at least \(n-t\). We call these the {\em fat} partitions of \(n\). Note that
\[\mathcal{F}_{n,t} = \{\alpha \vdash n:\ \alpha \geq (n-t,1^t)\}.\]
Let
\[\mathcal{F}_{n,t}' = \{\alpha \vdash n:\ \alpha_1' \geq n-t\}\]
denote the set of all partitions of \(n\) whose Young diagram has first column of depth at least \(n-t\). We call these the {\em tall} partitions of \(n\). Note that
\[\mathcal{F}_{n,t}' = \{\alpha^{\top}:\ \alpha \in \mathcal{F}_{n,t}\}:\]
the tall partitions are simply the transposes of the fat partitions. We call a partition {\em medium} if it is neither fat nor tall; we let
\[\mathcal{M}_{n,t} = \mathcal{P}_n \setminus (\mathcal{F}_{n,t} \cup \mathcal{F}_{n,t}') = \{\alpha \in \mathcal{P}_n:\ \alpha_1,\alpha_1' < n-t\}\]
denote the set of medium partitions.

We will use different arguments to analyse the eigenvalues corresponding to the fat, tall and medium partitions. The following lower bound on the dimensions of the medium Specht modules will be useful.

\begin{lemma}
\label{lemma:highdimension}
Let \(t \in \mathbb{N}\) be fixed. Then there exists \(c_t>0\) such that for all medium partitions \(\alpha\) of \(n\),
\[f^{\alpha} \geq c_t n^{t+1}.\]
\end{lemma}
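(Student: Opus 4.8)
The plan is to bound $f^{\alpha}$ from below by combining the hook-length formula (Theorem \ref{hook-formula}), in the guise of an extension of Lemma \ref{lemma:dimension-for-long}, with a branching-rule argument, treating separately the case in which one of the first row and first column of $\alpha$ is long and the case in which both are short. Throughout I use that $f^{\alpha} = f^{\alpha^{\top}}$ (Theorem \ref{thm:transpose-sign}) and that $\alpha$ is medium if and only if $\alpha^{\top}$ is, so that I may always pass to the transpose. The first tool is a mild extension of Lemma \ref{lemma:dimension-for-long}: running the proof of that lemma with $t$ replaced by an arbitrary $k$ shows that if $\alpha \vdash n$ has $\alpha_1 = n-k$ then $\prod_{(i,j)\notin R_1} h_{i,j} \le k!$ (deleting the first row) and $\prod_{(i,j)\in R_1} h_{i,j} < e^{k}(n-k)!$ (by AM/GM), so that
\[f^{\alpha} \ge \binom{n}{k}e^{-k} =: g(k).\]
Since $g(k+1)/g(k) = e^{-1}(n-k)/(k+1)$, the function $g$ is unimodal in $k$: increasing for $k<(n-e)/(e+1)$ and decreasing thereafter.

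\textbf{Case A: $\alpha_1 \ge \tfrac{2}{5}n$ or $\alpha_1' \ge \tfrac{2}{5}n$.} Passing to the transpose if necessary, assume $\alpha_1 \ge \tfrac{2}{5}n$, and put $k = n-\alpha_1 \le \tfrac{3}{5}n$; since $\alpha$ is medium, $k \ge t+1$. For $n$ large we have $t+1 < (n-e)/(e+1) < \tfrac{3}{5}n$, so unimodality of $g$ gives $f^{\alpha} \ge g(k) \ge \min\!\big(g(t+1),\, g(\lfloor \tfrac{3}{5}n\rfloor)\big)$. Now $g(t+1) = \binom{n}{t+1}e^{-(t+1)} \ge \tfrac{1}{2(t+1)!\,e^{t+1}}\,n^{t+1}$ for $n$ large, while the entropy bound $\binom{n}{2n/5} \ge 2^{nH(2/5)}/(n+1)$ (with $H$ the binary entropy) gives $g(\tfrac{3}{5}n) \ge \tfrac{1}{n+1}\exp\!\big(n(H(2/5)\ln 2 - \tfrac{3}{5})\big)$, which grows faster than every power of $n$ because $H(2/5)\ln 2 \approx 0.67 > 0.6$. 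Hence $f^{\alpha}\ge c_t\,n^{t+1}$ in this case.

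\textbf{Case B: $\alpha_1 < \tfrac{2}{5}n$ and $\alpha_1' < \tfrac{2}{5}n$.} Let $\mu$ be the partition whose Young diagram is obtained from that of $\alpha$ by deleting the first row and the first column, i.e. $\mu = (\alpha_2-1, \alpha_3-1, \ldots)$; its diagram consists of the cells of $\alpha$ lying in rows $\ge 2$ and columns $\ge 2$, so $|\mu| = n - \alpha_1 - \alpha_1' + 1 > \tfrac{1}{5}n$. Since $\mu$ has largest part $\alpha_2-1$ and exactly $\alpha_2'-1$ parts, $(\alpha_2-1)(\alpha_2'-1) \ge |\mu| > \tfrac{1}{5}n$, so $\max(\alpha_2,\alpha_2') > \sqrt{n/5}$. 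If $\alpha_2 > \sqrt{n/5}$, then the $2\times\alpha_2$ rectangle $(\alpha_2,\alpha_2)$ is contained in the Young diagram of $\alpha$ (note $\alpha$ has at least two rows, as $n$ is large), so iterating the Branching Rule (Theorem \ref{thm:branching-rule}) gives $f^{\alpha} \ge f^{(\alpha_2,\alpha_2)} = C_{\alpha_2}$, where $C_m = \tfrac{1}{m+1}\binom{2m}{m} \ge 4^{m}/(2m+1)^2$; since $m=\alpha_2 > \sqrt{n/5}$ this exceeds $n^{t+1}$ for $n$ large. If instead $\alpha_2' > \sqrt{n/5}$, apply the same argument to $\alpha^{\top}$, using $f^{\alpha} = f^{\alpha^{\top}} \ge C_{\alpha_2'}$. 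Taking $c_t$ to be the smallest of the constants arising, the lemma follows for all $n$ sufficiently large depending on $t$.

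The step I expect to be the main obstacle is Case B. The straightforward generalization of Lemma \ref{lemma:dimension-for-long} (the bound $f^{\alpha}\ge g(k)$) is genuinely too weak for ``balanced'' partitions — for a near-square $\alpha$ it can even tend to $0$ — so one must instead extract the large dimension from a moderately large rectangle sitting inside $\alpha$. Making the two cases meet then pins the threshold $\tfrac{2}{5}n$ into a fairly narrow window: any $\delta\in(0,\tfrac{1}{2})$ with $H(\delta)\ln 2 > 1-\delta$ works, and one also needs $1-2\delta$ bounded away from $0$ so that $\sqrt{(1-2\delta)n}\to\infty$.
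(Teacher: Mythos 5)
Your proposal is correct, and it reaches the lemma by a genuinely different route from the paper. The paper proceeds by induction on $n$: it restricts $[\alpha]$ to $S_{n-1}$, shows via the Branching Rule that a low-dimensional $[\alpha]$ must either inherit a fat/tall constituent (whence $\alpha$ is fat or tall), or have $\alpha_1 = n-t-1$ (handled by a direct hook-length computation giving dimension $\geq \frac{1}{2(t+1)!}n^{t+1}$), or be irreducible on restriction and hence rectangular, in which case a further restriction to $S_{n-2}$ yields a contradiction. You instead give a direct, non-inductive dichotomy: when $\alpha_1$ or $\alpha_1'$ is at least $\tfrac{2}{5}n$, the uniform hook-formula bound $f^{\alpha} > e^{-k}\binom{n}{k}$ (the natural extension of Lemma \ref{lemma:dimension-for-long}, with $k = n-\alpha_1 \geq t+1$ since $\alpha$ is medium) together with unimodality of $k \mapsto e^{-k}\binom{n}{k}$ and the entropy estimate $H(2/5)\ln 2 > 3/5$ does the job; when both are short, deleting the first row and column leaves more than $n/5$ cells, forcing a $2\times m$ rectangle with $m > \sqrt{n/5}$ inside $\alpha$ or $\alpha^{\top}$, and the Catalan number $f^{(m,m)} \geq 4^{m}/(2m+1)^2$ is superpolynomial. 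Both arguments are correct; I checked the delicate points (the hook-length computation for general $k$, the location of the minimum of the unimodal sequence on $[t+1,\lfloor 3n/5\rfloor]$, the cell count $n-\alpha_1-\alpha_1'+1$, and the fact that any sub-diagram is reachable by sequentially removing corners so that $f^{\alpha} \geq f^{\beta}$). Your approach buys a sharper qualitative picture --- it shows the dimension is actually superpolynomial except for partitions with $\alpha_1$ or $\alpha_1'$ equal to $n-t-1$, which are the genuinely extremal medium partitions and which both proofs ultimately handle by the same hook-length estimate --- at the cost of a numerical threshold check; the paper's induction is more uniform but requires care with base cases and the condition $(n-t-1)\cdots(n-2t-1)\geq \tfrac{1}{2}n^{t+1}$. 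One small housekeeping point: you establish the bound only for $n$ sufficiently large depending on $t$, so you should add (as the paper does) that the finitely many remaining $n$ are absorbed by shrinking $c_t$, since every medium partition has $f^{\alpha}\geq 1$.
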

\begin{proof}
Let \(t \in \mathbb{N}\) be fixed. We proceed by induction on \(n\). By choosing \(c_t >0\) sufficiently small, we may assume that the statement of the lemma holds for all \(n\) such that \((n-t-1)(n-t-2) \ldots (n-2t-1) < \tfrac{1}{2} n^{t+1}\).

Let \(n\) be such that
\begin{equation}\label{eq:assumptiononn}(n-t-1)(n-t-2) \ldots (n-2t-1) \geq \tfrac{1}{2} n^{t+1}\end{equation}
Assume that the statement of the lemma holds for \(n-2\) and \(n-1\); we will prove it for \(n\). Let \(\alpha\) be an partition of \(n\) with \(\dim[\alpha]\ (\ = f^{\alpha}) < c_{t} n^{t+1}\). We wish to show that \(\alpha \in \mathcal{F}_{n,t} \cup \mathcal{F}_{n,t}'\). Consider the restriction \([\alpha]\downarrow S_{n-1}\), which has the same dimension as \([\alpha]\).

First suppose that \([\alpha]\downarrow S_{n-1}\) is reducible. Suppose it has \([\beta]\) as a constituent for some \(\beta \in \mathcal{F}_{n-1,t} \cup \mathcal{F}_{n-1,t}'\). It follows from the Branching Rule that \(\alpha_1 = \beta_1\) or \(\beta_1+1\). If \(\beta_{1} \geq n-t\), then \(\alpha_{1} \geq n-t\), so \(\alpha \in \mathcal{F}_{n,t}\). Similarly, if \(\beta_{1}' \geq n-t\), then \(\alpha_{1}' \geq n-t\), so \(\alpha \in \mathcal{F}_{n,t}'\).

Suppose that \(\beta_{1} = n-t-1\). If \(\alpha_1 = n-t\), then \(\alpha \in \mathcal{F}_{n,t}\), so we are done. Hence, we may assume that \(\alpha_{1} = n-t-1\). We proceed to bound \(\dim [\alpha]\) from below using the Hook formula. Notice that for \(j \geq t+2\), \(\alpha_{j}' \leq 1\), so the hook lengths of \(\alpha\) satisfy \(h^{\alpha}_{1,j} = n-t-j\); for \(1 \leq j \leq t+1\) we trivially have \(h^{\alpha}_{1,j} \leq n+1-j\). Also, since there are just \(t+1\) spaces below the first row of the Young diagram of \(\alpha\), we have \(\prod_{i \geq 2, j \geq 1} h^{\alpha}_{i,j} \leq (t+1)!\). Hence, the product of the hook lengths satisfies:
\[\prod_{i,j}h^{\alpha}_{i,j} \leq n(n-1)\ldots(n-t) (n-2t-2)! (t+1)!\]
and therefore, by the Hook formula,
\[\dim [\alpha] \geq \frac{(n-t-1)(n-t-2) \ldots (n-2t-1)}{(t+1)!} \geq \frac{1}{2(t+1)!} n^{t+1} \geq c_t n^{t+1},\]
provided we choose \(c_t \leq \tfrac{1}{2(t+1)!}\). By symmetry, the same conclusion holds if \(\beta_{1}' = n-t-1\) and \(\alpha_{1}' = n-t-1\).

Hence, we may assume that each irreducible constituent \([\beta]\) of \([\alpha] \downarrow S_{n-1}\) has \(\beta_1,\beta_1' < n-t-1\). Hence, by the induction hypothesis for \(n-1\), each constituent has dimension \(\geq c_{t} (n-1)^{t+1}\). By our assumption (\ref{eq:assumptiononn}) on \(n\), \(2c_{t}(n-1)^{t+1} \geq c_{t}n^{t+1}\), so there is just one constituent, i.e. \([\alpha] \downarrow S_{n-1}\) is irreducible. The only way this can happen is if \(\alpha\) has rectangular Young diagram, i.e. \(\alpha = (a^{b})\) for some \(a,b \in \mathbb{N}\) such that \(ab=n\). Since \(b \geq 2\), we have \(a \leq n/2 < n-2-t\) (since \(n > 2t+4\), by our assumption (\ref{eq:assumptiononn}) on \(n\)); similarly, \(b < n-2-t\).

Now consider the restriction
\[[\alpha]\downarrow S_{n-2} = [a^{b-1},a-2] + [a^{b-2},a-1,a-1]\]
Note that both of these irreducible constituents have Young diagram with first row of length \(\leq a < n-2-t\) and first column of length \(\leq b < n-2-t\), and therefore by the induction hypothesis for \(n-2\), have dimension \(\geq c_{t} (n-2)^{t+1}\). By our assumption (\ref{eq:assumptiononn}) on \(n\), we have \(2c_{t}(n-2)^{t+1} \geq c_{t}n^{t+1}\), contradicting \(\dim ([\alpha] \downarrow S_{n-2}) < c_{t}n^{t+1}\). This completes the proof.
\end{proof}

\subsection*{The number of even/odd permutations in \(S_n\) with no short cycles}
Let \(D_{n,t}\) denote the number of permutations in \(S_{n}\) with no cycle of length at most \(t\), and let \(E_{n,t}\) and \(O_{n,t}\) denote the number of these permutations which are respectively even and odd. We will need the following.
\begin{lemma}
\label{lemma:noshortcyclesbound}
For each \(t \in \mathbb{N}\), there exists \(L_{t} >0\) such that \(\min\{E_{n,t},O_{n,t}\} \geq L_{t} n!\) for all \(n \geq 2t+2\).
\end{lemma}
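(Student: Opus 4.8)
The plan is to estimate $E_{n,t}$ and $O_{n,t}$ via the difference $E_{n,t}-O_{n,t} = \sum_{\sigma} \sgn(\sigma)$ and the sum $E_{n,t}+O_{n,t} = D_{n,t}$, where both sums range over permutations of $[n]$ with no cycle of length $\leq t$. Since $\min\{E_{n,t},O_{n,t}\} = \tfrac{1}{2}\bigl(D_{n,t} - |E_{n,t}-O_{n,t}|\bigr)$, it suffices to show that $D_{n,t} \geq c\, n!$ for some $c=c(t)>0$ and that $|E_{n,t}-O_{n,t}| = o(n!)$ — in fact we will get the signed sum to be much smaller, of lower order in $n$, which is more than enough.

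First I would handle $D_{n,t}$. The number of permutations of $[n]$ with no cycle of length $\leq t$ is a standard quantity: by inclusion–exclusion on which short cycles are "forbidden", or more cleanly by the exponential formula, $D_{n,t}/n! \to \exp\bigl(-\sum_{j=1}^{t} 1/j\bigr)$ as $n\to\infty$. Concretely, one can write the generating function $\sum_n D_{n,t} x^n/n! = \exp\bigl(\sum_{j>t} x^j/j\bigr) = (1-x)^{-1}\exp\bigl(-\sum_{j=1}^{t}x^j/j\bigr)$, and extract coefficients; alternatively, a direct inclusion–exclusion bound shows $D_{n,t}/n! \geq \prod_{j=1}^{t}(1 - 1/j)$-type lower bounds are false, but the correct asymptotic constant $e^{-H_t}$ with $H_t=\sum_{j\le t}1/j$ is easily made into a uniform lower bound $D_{n,t} \geq \tfrac{1}{2}e^{-H_t} n!$ for $n$ large, and a crude check covers the finitely many remaining $n \geq 2t+2$. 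So there is a constant $c_t'>0$ with $D_{n,t}\geq c_t' n!$ for all $n\geq 2t+2$.

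Next, the signed count. Here the key identity is that $\sum_{\sigma\in S_n}\sgn(\sigma) z_1^{c_1(\sigma)}\cdots$ (a weighted sum tracking cycle lengths) factors through the cycle-index-type formula: summing $\sgn(\sigma)$ over permutations with prescribed constraints on cycle lengths is governed by $\exp\bigl(\sum_{j} (-1)^{j-1} x^j/j\bigr)$, since a $j$-cycle has sign $(-1)^{j-1}$. Thus $\sum_n (E_{n,t}-O_{n,t})x^n/n! = \exp\bigl(\sum_{j>t}(-1)^{j-1}x^j/j\bigr)$. Now $\sum_{j>t}(-1)^{j-1}x^j/j = \log(1+x) - \sum_{j=1}^{t}(-1)^{j-1}x^j/j$, so the generating function equals $(1+x)\cdot\exp\bigl(-\sum_{j=1}^{t}(-1)^{j-1}x^j/j\bigr)$, which is a \emph{polynomial times a fixed entire function of bounded-degree-polynomial-in-the-exponent} — in particular, $(1+x)$ times something whose Taylor coefficients decay super-exponentially. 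Hence $E_{n,t}-O_{n,t} = O_t(n^{t-1})\cdot(n-1)!$ or better; in any case $|E_{n,t}-O_{n,t}| = o(n!)$, indeed $|E_{n,t}-O_{n,t}| \leq C_t\, n!/n$ for $n$ large. (One can also see this combinatorially: $E_{n,t}-O_{n,t}$ counts, with sign, permutations all of whose cycles are long, and a sign-reversing near-involution — e.g. conditioning on the cycle containing $1$ and toggling its parity by a local surgery — kills almost everything, leaving a lower-order remainder.)

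Combining, for $n$ large we get $\min\{E_{n,t},O_{n,t}\} = \tfrac12\bigl(D_{n,t}-|E_{n,t}-O_{n,t}|\bigr) \geq \tfrac12\bigl(c_t' n! - C_t n!/n\bigr) \geq \tfrac14 c_t' n!$, and we absorb the finitely many remaining values of $n$ in the range $2t+2 \leq n \leq N_t$ by shrinking the constant (each such $\min\{E_{n,t},O_{n,t}\}$ is a fixed positive integer — positivity for these small cases is a finite check, noting that with $n\geq 2t+2$ there genuinely exist both even and odd permutations with all cycles longer than $t$, e.g. take a single long cycle versus a product of two long cycles of suitable parities). Setting $L_t$ to be the minimum of $\tfrac14 c_t'$ and these finitely many normalized values gives the claim. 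The main obstacle is getting the signed estimate $|E_{n,t}-O_{n,t}|=o(n!)$ cleanly; the generating-function computation above is the most efficient route, and the only care needed is justifying that the coefficients of a fixed power series $\exp(-p(x))$ with $p$ a polynomial of degree $t$ (no constant term) grow at most like $O_t(1)$ times a polynomial divided by $n!$-type factorials — equivalently that $(1+x)\exp(-\sum_{j\le t}(-1)^{j-1}x^j/j)$ has coefficients that are $O_t(n^{t})/n!$, which follows from the entire function being of finite order, or simply from a direct recursion on the coefficients.
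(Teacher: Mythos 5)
Your proof is correct, but it takes a genuinely different route from the paper. You split the problem as $\min\{E_{n,t},O_{n,t}\}=\tfrac12\bigl(D_{n,t}-|E_{n,t}-O_{n,t}|\bigr)$ and control the two pieces by the exponential formula: the EGF of $D_{n,t}$ is $(1-x)^{-1}\exp\bigl(-\sum_{j\le t}x^j/j\bigr)$, whose coefficients tend to $e^{-H_t}>0$, while the sign-weighted EGF is $(1+x)\exp\bigl(-\sum_{j\le t}(-1)^{j-1}x^j/j\bigr)$, an entire function of finite order, so $|E_{n,t}-O_{n,t}|=n!\,|a_n|=o(n!)$; both computations are right, and your observation that for $n\ge 2t+2$ a single $n$-cycle and a product of two $(\ge t{+}1)$-cycles realize both signs correctly disposes of the finitely many small $n$. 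The paper instead establishes the explicit base cases $2t+2\le n\le 3t+2$ (where the long-cycle structure can be enumerated directly, giving $L_t=2/(3t+2)^2$) and then derives exact recurrences for $E_{n,t}$ and $O_{n,t}$ by conditioning on $\sigma(n)$, which propagate the bound $L_t n!$ by induction with no asymptotic analysis and no ``for $n$ sufficiently large'' threshold. Your approach is conceptually cleaner and yields the sharp asymptotics $E_{n,t},O_{n,t}\sim\tfrac12 e^{-H_t}n!$, at the cost of invoking generating-function machinery and leaving the constant inexplicit; the paper's is entirely elementary and uniform in $n$ from the stated threshold onward. Either argument suffices for the lemma as used.
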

\begin{proof}
First suppose that \(2t+2 \leq n \leq 3t+2\). If \(n\) is odd, then the even permutations with no cycles of length \(\leq t\) are precisely the \(n\)-cycles, and the odd ones are precisely the permutations with exactly two cycles, both of length \(\geq t+1\); if \(n\) is even, the situation is reversed. Choose \(L_{t} > 0\) such that \(\min\{E_{n,t},O_{n,t}\} \geq L_{t} n!\) whenever \(2t+2 \leq n \leq 3t+2\); it is easy to check that we can take \(L_{t} = \frac{2}{(3t+2)^{2}}\).

We now derive recurrence relations for \(E_{n,t}\) and \(O_{n,t}\). Let \(\sigma\) be an even permutation with no cycle of length \(\leq t\). Let \(i=\sigma(n)\); then we may write \(\sigma = (ni) \rho\) where \(\rho\) is an odd permutation of \([n-1]\) and has no cycle of length \(\leq t\) except possibly a \(t\)-cycle containing \(i\). Conversely, given any such pair \(\rho,i\), \((ni)\rho\) has no cycle of length \(\leq t\). Hence, we have
\begin{eqnarray*}
E_{n,t} & = & (n-1)(O_{n-1,t}+ (n-2)(n-3)\ldots(n-t)E_{n-t-1,t})\quad \textrm{if }t \textrm{ is even};\\
E_{n,t}& = &(n-1)(O_{n-1,t}+ (n-2)(n-3)\ldots(n-t)O_{n-t-1,t})\quad \textrm{if }t \textrm{ is odd}.
\end{eqnarray*}
Similarly,
\begin{eqnarray*}
O_{n,t} & = & (n-1)(E_{n-1,t}+ (n-2)(n-3)\ldots(n-t)O_{n-t-1,t})\quad \textrm{if }t \textrm{ is even};\\
O_{n,t}& = &(n-1)(E_{n-1,t}+ (n-2)(n-3)\ldots(n-t)E_{n-t-1,t})\quad \textrm{if }t \textrm{ is odd}.
\end{eqnarray*}
We can now prove the lemma by induction on \(n\). Let \(n \geq 3t+3\) and assume the statement is true for smaller values of \(n\). The recurrence relations above give
\[E_{n,t},O_{n,t} \geq (n-1)(L_{t}(n-1)!+(n-2)(n-3)\ldots(n-t)L_{t}(n-t-1)!) = L_{t}n!,\]
as required.
\end{proof}

\section{Construction of the pseudoadjacency matrix}
Our aim is now to construct a pseudoadjacency matrix \(A\) with the right eigenvalues to deduce Theorem \ref{thm:main} from Theorem \ref{thm:pseudo}. Our pseudoadjacency matrix will be a linear combination of adjacency matrices of Cayley graphs generated by conjugacy classes of \(t\)-derangements in \(S_n\).

Note that if \(\Cay(S_n,X)\) is any normal Cayley graph on \(S_n\), then by Theorem \ref{thm:normalcayley}, for each partition \(\alpha\) of \(n\), the subspace \(U_{\alpha}\) is an eigenspace of \(\Cay(S_n,X)\), and the corresponding eigenvalue is
\[\lambda_{\alpha} := \frac{1}{f^\alpha} \sum_{\sigma \in X} \chi_{\alpha}(\sigma).\]

In particular, if \(\mu\) is a partition of \(n\), and \(X_{\mu}\) denotes the conjugacy class of permutations in \(S_n\) with cycle-type \(\mu\), then the eigenvalues of the normal Cayley graph \(\Cay(S_n,X_{\mu})\) are given by
\[\lambda^{\mu}_\alpha = \frac{1}{f^\alpha} \sum_{\sigma \in X_{\mu}} \chi_{\alpha}(\sigma)\quad (\alpha \vdash n).\]

Let \(A^{\mu}\) denote the adjacency matrix of \(\Cay(S_n,X_{\mu})\). Let \(A\) be a real linear combination of the \(A^{\mu}\)'s, i.e.
\[A^{(w)} = \sum_{\mu \vdash n} w_{\mu} A^{\mu},\]
where each \(w_{\mu} \in \mathbb{R}\). (Surprisingly, it will be necessary for us to allow \(w_{\mu}<0\).) We may regard \(w\) as a class function on \(S_n\): for each \(\sigma \in S_n\), simply define \(w(\sigma) = w_{\mu}\), where \(\mu\) is the cycle-type of \(\sigma\). Note that
\[A^{(w)}_{\sigma,\pi} = w(\sigma^{-1} \pi) \quad \forall \sigma,\pi \in S_n.\]

Each \(U_{\alpha}\) is again an eigenspace of \(A^{(w)}\); the corresponding eigenvalue is clearly
\[\lambda^{(w)}_\alpha = \frac{1}{f^\alpha} \sum_{\sigma \in S_n} w(\sigma) \chi_{\alpha}(\sigma).\]
In terms of the inner product
\[\langle \phi, \psi \rangle = \sum_{\sigma \in S_n}\phi(\sigma) \overline{\psi(\sigma)}\]
on \(\mathbb{C}[S_n]\), this becomes
\[\lambda^{(w)}_\alpha = \frac{1}{f^\alpha} \langle w, \chi_{\alpha} \rangle.\]

Since \(U_{(n)}\) is the subspace of constant functions in \(\mathbb{C}[S_n]\), and \(\chi_{(n)} \equiv 1\), we have
\[\lambda^{(w)}_{\const} = \lambda^{(w)}_{(n)} = \langle w, \chi_{(n)} \rangle = \sum_{\sigma \in S_n} w(\sigma).\]

Provided \(w\) is supported on \(\mathcal{D}_{(t)}\), the set of all \(t\)-derangments of \(S_n\), the matrix \(A^{(w)}\) is a pseudoadjacency matrix for \(\Gamma_{(t)}\). Our aim is to construct (for \(n\) sufficiently large depending on \(t\)), a class function \(w\) supported on the \(t\)-derangments, with
\begin{equation}\label{eq:quotient}\lambda^{(w)}_{\min}/\lambda^{(w)}_{\const} = -\frac{1}{{n \choose t}-1} := \nu_{n,t}.\end{equation}
The upper bound in Theorem \ref{thm:main} will then follow from Theorem \ref{thm:pseudo}.

Since rescaling \(w\) makes no difference to (\ref{eq:quotient}), we will construct a class function \(w\) with \(\lambda^{(w)}_{\const}=1\) and \(\lambda^{(w)}_{\min} = \nu_{n,t}\). Since equality has to hold in Theorem \ref{thm:pseudo} when the independent set is a coset of the stabilizer of a \(t\)-set, i.e. of the form \(T_{x \mapsto y}\) for some \(x,y \in [n]^{(t)}\), we know that for any \(x,y \in [n]^{(t)}\), we must have
\[1_{T_{x \mapsto y}} - \frac{1}{{n \choose t}}\mathbf{f} \in \Ker(\nu_{n,t} I - A).\]
By Theorem \ref{thm:cosets-span}, it follows that we must have \(U_{(n-s,s)} \leq \Ker(\nu_{n,t} I-A)\) for all \(s \in [t]\), i.e.
\[\lambda^{(w)}_{(n-s,s)} = \nu_{n,t} = -\frac{1}{{n \choose t}-1}\quad \forall s \in [t].\]

In fact, we will ensure that \(|\lambda^{(w)}_{\alpha}| \leq C_t /n^{t+1}\) for all non-critical partitions \(\alpha\) of \(n\), where \(C_t >0\) depends only upon \(t\). This will imply that for \(n\) sufficiently large depending on \(t\), the \(\lambda_{\min}\)-eigenspace of \(A\) is precisely
\[\bigoplus_{s=1}^{t}U_{(n-s,s)}.\]
This will imply immediately (via Theorem \ref{thm:pseudo}) that if \(n\) is sufficiently large depending on \(t\), and \(\mathcal{A} \subset S_n\) is a \(t\)-set-intersecting family with \(|\mathcal{A}| = t!(n-t)!\), then the characteristic function of \(\mathcal{A}\) lies in the subspace \(U_t\) --- which, by Theorem \ref{thm:cosets-span}, is precisely the linear span of the characteristic functions of the \(T_{x \mapsto y}\)'s (\(x,y \in [n]^{(t)}\)). This will ultimately enable us to show that the \(T_{x \mapsto y}\)'s are the unique extremal families.

To summarize, our aim is to construct a class function satisfying the following conditions:

\begin{itemize}
\item \(w\) is supported on the \(t\)-derangements of \(S_n\);
\item \(\lambda^{(w)}_{(n)} = 1\);
\item \(\lambda^{(w)}_{(n-s,s)} = -\frac{1}{{n \choose t}-1}\ \forall s \in [t]\);
\item \(|\lambda^{(w)}_{\alpha}| \leq C_t/n^{t+1}\) for all other partitions \(\alpha\), where \(C_t >0\) depends only upon \(t\).
\end{itemize}

The following will enable us to deal with all the medium partitions at once.
\begin{claim}
\label{claim:mediumevals}
Let \(c_t\) be the constant in Lemma \ref{lemma:highdimension}. If \(\max_{\sigma \in S_n} |w(\sigma)| \leq K_t/n!\), then for all medium partitions \(\alpha\) of \(n\),
\[|\lambda_{\alpha}| \leq \frac{K_t}{c_t n^{t+1}}.\]
\end{claim}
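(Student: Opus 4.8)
The plan is to bound $|\lambda_\alpha|$ directly from the formula $\lambda^{(w)}_\alpha = \frac{1}{f^\alpha}\langle w,\chi_\alpha\rangle$, using the Cauchy--Schwarz inequality together with the orthonormality of the irreducible characters of $S_n$, and then to feed in the dimension lower bound of Lemma \ref{lemma:highdimension}. (Note that the crude bound $|\chi_\alpha(\sigma)| \leq f^\alpha$ only yields $|\lambda_\alpha| \leq K_t$, which is useless here; Cauchy--Schwarz is what gives the saving.)

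First, recall that with respect to the inner product $\langle \phi,\psi\rangle = \sum_{\sigma \in S_n}\phi(\sigma)\overline{\psi(\sigma)}$ on $\mathbb{C}[S_n]$ we have $\lambda^{(w)}_\alpha = \frac{1}{f^\alpha}\langle w,\chi_\alpha\rangle$. By Cauchy--Schwarz,
\[|\langle w,\chi_\alpha\rangle| \leq \|w\|_2\,\|\chi_\alpha\|_2,\]
where $\|g\|_2 = \langle g,g\rangle^{1/2}$. Since the irreducible characters of $S_n$ are orthonormal with respect to the normalised inner product $\tfrac{1}{n!}\langle\cdot,\cdot\rangle$, we have $\|\chi_\alpha\|_2^2 = \sum_{\sigma \in S_n}|\chi_\alpha(\sigma)|^2 = n!$, so $\|\chi_\alpha\|_2 = \sqrt{n!}$. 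On the other hand, the hypothesis $\max_{\sigma \in S_n}|w(\sigma)| \leq K_t/n!$ gives $\|w\|_2^2 = \sum_{\sigma \in S_n}|w(\sigma)|^2 \leq n!\cdot (K_t/n!)^2 = K_t^2/n!$, hence $\|w\|_2 \leq K_t/\sqrt{n!}$. Multiplying the two estimates yields $|\langle w,\chi_\alpha\rangle| \leq K_t$, and therefore $|\lambda_\alpha| \leq K_t/f^\alpha$.

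Finally, if $\alpha$ is a medium partition of $n$, Lemma \ref{lemma:highdimension} gives $f^\alpha \geq c_t n^{t+1}$, and so $|\lambda_\alpha| \leq K_t/(c_t n^{t+1})$, as claimed. I do not expect any real obstacle here: the whole argument is immediate once the dimension bound of Lemma \ref{lemma:highdimension} is available, the only point needing care being the evaluation $\|\chi_\alpha\|_2 = \sqrt{n!}$ via character orthogonality.
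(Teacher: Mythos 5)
Your proof is correct and is essentially the paper's argument in different packaging: the paper obtains the same intermediate bound \(|\lambda^{(w)}_{\alpha}| \leq K_t/f^{\alpha}\) by computing \(\Trace((A^{(w)})^2) = \sum_{\alpha}(f^{\alpha})^2(\lambda^{(w)}_{\alpha})^2 = n!\sum_{\tau}w(\tau)^2 \leq K_t^2\) and discarding all but one term, which is precisely the Parseval identity underlying your Cauchy--Schwarz-plus-character-orthogonality step. Both arguments then conclude identically via Lemma \ref{lemma:highdimension}.
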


To prove this, we will appeal to the following well-known fact:
\begin{lemma}
\label{lemma:trace}
If \(A\) is a real, symmetric, \(N \times N\) matrix, with eigenvalues \(\lambda_1,\lambda_2,\ldots,\lambda_N\) (repeated with their geometric multiplicities), then
\[\sum_{i=1}^{N} \lambda_i^2 = \Trace(A^2) = \sum_{i,j} A_{i,j}^2.\]
\end{lemma}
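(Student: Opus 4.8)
The plan is to deduce both equalities from the spectral theorem together with the cyclic invariance of the trace. First I would invoke the fact that a real symmetric matrix \(A\) is orthogonally diagonalizable: there is an orthogonal matrix \(Q\) (so \(Q^{T}Q = QQ^{T} = I\)) and a diagonal matrix \(D = \mathrm{diag}(\lambda_1,\ldots,\lambda_N)\) with \(A = QDQ^{T}\), where the diagonal entries of \(D\) are precisely the eigenvalues of \(A\), each appearing as often as its multiplicity. For a real symmetric matrix the geometric and algebraic multiplicities of each eigenvalue coincide, so the phrase ``repeated with their geometric multiplicities'' is unambiguous, and the list \(\lambda_1,\ldots,\lambda_N\) is exactly the diagonal of \(D\) up to reordering.

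For the first equality I would square this identity: \(A^{2} = QDQ^{T}QDQ^{T} = QD^{2}Q^{T}\). Taking traces and using \(\Trace(XY) = \Trace(YX)\), one gets \(\Trace(A^{2}) = \Trace(QD^{2}Q^{T}) = \Trace(D^{2}Q^{T}Q) = \Trace(D^{2}) = \sum_{i=1}^{N}\lambda_i^{2}\), since \(D^{2} = \mathrm{diag}(\lambda_1^{2},\ldots,\lambda_N^{2})\).

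For the second equality I would simply expand the matrix product entrywise: \(\Trace(A^{2}) = \sum_{i=1}^{N}(A^{2})_{i,i} = \sum_{i=1}^{N}\sum_{j=1}^{N}A_{i,j}A_{j,i}\), and then use the symmetry \(A_{j,i} = A_{i,j}\) to replace the summand by \(A_{i,j}^{2}\), obtaining \(\Trace(A^{2}) = \sum_{i,j}A_{i,j}^{2}\).

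There is no real obstacle: this is a standard linear-algebra fact, and the only nontrivial input is the spectral theorem, which is what allows \(\Trace(A^{2})\) to be identified with \(\sum_i \lambda_i^{2}\); symmetry of \(A\) is precisely what makes the final step \(A_{i,j}A_{j,i} = A_{i,j}^{2}\) valid. If one wished to avoid the spectral theorem, it would suffice to use that \(A\) is diagonalizable over \(\mathbb{C}\) (so that \(A^{2}\) has eigenvalues \(\lambda_i^{2}\) with the same multiplicities) together with the fact that the trace of any square matrix equals the sum of its eigenvalues counted with algebraic multiplicity.
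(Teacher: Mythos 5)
Your proposal is correct and follows essentially the same route as the paper: diagonalize \(A\), use cyclic invariance of the trace to get \(\Trace(A^2)=\Trace(D^2)=\sum_i\lambda_i^2\), and expand \(\Trace(A^2)\) entrywise using symmetry. The only cosmetic difference is that you take the diagonalizing matrix to be orthogonal, whereas the paper only uses an arbitrary invertible \(P\); both suffice.
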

\begin{proof}[Proof of Lemma \ref{lemma:trace}]
Diagonalize \(A\): there exists a real invertible matrix \(P\) such that \(A = P^{-1}DP\), where \(D\) is the diagonal matrix
\[D = \left(\begin{array}{cccc} \lambda_{1} & 0 & \ldots & 0\\
0 & \lambda_{2} && 0\\
\vdots & & \ddots & \vdots\\
0 & & \ldots & \lambda_{N} \end{array} \right).\] 
We have \(A^{2} = P^{-1}D^{2}P\), and therefore
\[\sum_{i,j=1}^{N}A_{i,j}^2 = \Trace(A^{2}) = \Trace(P^{-1}D^{2}P) = \Trace(D^{2}) = \sum_{i=1}^{N}\lambda_{i}^{2},\]
as required.
\end{proof}

\begin{proof}[Proof of Claim \ref{claim:mediumevals}]
We apply Lemma \ref{lemma:trace} to the matrix \(A^{(w)}\). Since \(\dim(U_\alpha) = (f^\alpha)^2\), we obtain:
\[\sum_{\alpha \vdash n} (f^\alpha)^2 (\lambda^{(w)}_\alpha)^2 = \sum_{\sigma,\pi}A_{\sigma,\pi}^2 = n! \sum_{\tau \in S_n} w(\tau)^2 \leq (n!)^2 (K_t/n!)^2 = K_t^2.\]
It follows that for each partition \(\alpha\) of \(n\), we have
\[|\lambda^{(w)}_{\alpha}| \leq K_t/f^\alpha.\]
By Lemma \ref{lemma:highdimension}, there exists \(c_t>0\) such that for every medium partition \(\alpha\),
\[f^\alpha \geq c_t n^{t+1}.\]
Hence,
\[|\lambda^{(w)}_\alpha| \leq \frac{K_t}{c_t n^{t+1}}\]
for each medium partition \(\alpha\), proving the claim.
\end{proof}

Hence, we need only satisfy the following conditions:

\begin{enumerate}
\item \(w\) is supported on the \(t\)-derangements of \(S_n\);
\item \(\lambda^{(w)}_{(n)} = 1\);
\item \(\lambda^{(w)}_{(n-s,s)} = -\frac{1}{{n \choose t}-1}\ \forall s \in [t]\);
\item \(|\lambda^{(w)}_{\alpha}| \leq C_t/n^{t+1}\) for all fat, non-critical partitions \(\alpha\) (i.e. for all \(\alpha \in \mathcal{F}_{n,t} \setminus \mathcal{C}_{n,t}\)), where \(C_t\) depends only upon \(t\);
\item \(\max_{\sigma \in S_n} |w(\sigma)| \leq O_t(1/n!)\) (this deals with all the medium partitions);
\item \(\lambda^{(w)}_{\alpha} = 0\) for all tall partitions \(\alpha\).
\end{enumerate}

Rephrasing these in terms of the inner product, we obtain:

\begin{enumerate}
\item \(w\) is supported on the \(t\)-derangements of \(S_n\);
\item \(\langle w , \chi_{(n)} \rangle = 1\);
\item \(\langle w , \chi_{(n-s,s)} \rangle = -\frac{{n \choose s}-{n \choose s-1}}{{n \choose t}-1}\ \forall s \in [t]\);
\item \(|\langle w , \chi_{\alpha} \rangle| \leq f^{\alpha} C_t/n^{t+1}\) for all \(\alpha \in \mathcal{F}_{n,t} \setminus \mathcal{C}_{n,t}\), where \(C_t\) depends only upon \(t\);
\item \(\max_{\sigma \in S_n} |w(\sigma)| \leq O_t(1/n!)\);
\item \(\langle w , \chi_{\alpha} \rangle = 0\) for all tall partitions \(\alpha\).
\end{enumerate}

(In 3, we have used the fact that \(f^{(n-s,s)} = {n \choose s}-{n \choose s-1}\), from Corollary \ref{corr:dims}.) We now observe the following.
\begin{lemma}
\label{lemma:determine}
If \(w\) is any function supported on the \(t\)-derangements of \(S_n\), then \((\langle w,\chi_{\alpha}\rangle:\ \alpha_1 \geq n-t+1)\) determines \((\langle w, \chi_{\alpha} \rangle: \alpha_1=n-t)\).
\end{lemma}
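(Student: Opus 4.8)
The plan is to extract, from the single hypothesis that \(w\) is supported on the \(t\)-derangements, exactly \(p_t\) linear constraints on the numbers \(\langle w,\chi_\alpha\rangle\) --- one for each partition \(\delta\) of \(t\) --- and then to show that these constraints form an invertible linear system which determines precisely the \(p_t\) values \(\langle w,\chi_\alpha\rangle\) with \(\alpha_1=n-t\) in terms of those with \(\alpha_1\ge n-t+1\).

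First I would observe that for every partition \(\delta\) of \(t\) (so that \((n-t,\delta)\) is a partition of \(n\), which holds once \(n>2t\)), the permutation character \(\xi_{(n-t,\delta)}\) vanishes on every \(t\)-derangement. Indeed, \(\xi_{(n-t,\delta)}(\sigma)\) is the number of \((n-t,\delta)\)-tabloids fixed by \(\sigma\), and if \(\sigma\) fixes such a tabloid then each of its rows is \(\sigma\)-invariant, so the union of all rows but the first is a \(\sigma\)-invariant \(t\)-set --- impossible for a \(t\)-derangement. Hence \(w(\sigma)\,\xi_{(n-t,\delta)}(\sigma)=0\) for every \(\sigma\in S_n\), and since \(\xi_{(n-t,\delta)}\) is real-valued, summing over \(\sigma\) gives \(\langle w,\xi_{(n-t,\delta)}\rangle=0\) for every \(\delta\vdash t\).

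Next I would expand each \(\xi_{(n-t,\delta)}\) into irreducible characters via Young's Rule (Theorem~\ref{thm:young-rule}): \(\xi_{(n-t,\delta)}=\sum_{\lambda\domgeq(n-t,\delta)}K_{\lambda,(n-t,\delta)}\chi_\lambda\), where every such \(\lambda\) has \(\lambda_1\ge n-t\). Separating the terms with \(\lambda_1=n-t\) --- which are exactly the partitions \((n-t,\delta')\) for \(\delta'\vdash t\) --- from those with \(\lambda_1\ge n-t+1\), the relation above becomes
\[\sum_{\delta'\vdash t}K_{(n-t,\delta'),\,(n-t,\delta)}\,\langle w,\chi_{(n-t,\delta')}\rangle \;=\; -\sum_{\beta:\ \beta_1\ge n-t+1}K_{\beta,(n-t,\delta)}\,\langle w,\chi_\beta\rangle ,\]
whose right-hand side involves only the quantities \(\langle w,\chi_\beta\rangle\) with \(\beta_1\ge n-t+1\). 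The one genuine computation here is the identity \(K_{(n-t,\delta'),\,(n-t,\delta)}=K_{\delta',\delta}\), the Kostka number for \(S_t\): in a semistandard generalized \((n-t,\delta')\)-tableau of content \((n-t,\delta)\), the \(n-t\) entries equal to \(1\) must fill the entire first row (there are \(n-t\) columns and, by strict increase down columns, each column can contain at most one \(1\)), and what then remains is precisely a semistandard filling of the Young diagram of \(\delta'\) with content \(\delta\).

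Finally, the coefficient matrix \((K_{\delta',\delta})_{\delta',\delta\vdash t}\) of this \(p_t\times p_t\) system is the Kostka matrix of \(S_t\), which by Lemma~\ref{lemma:kostka} is strictly upper-triangular in lexicographic order and hence invertible. Therefore the system has a unique solution expressing \((\langle w,\chi_{(n-t,\delta')}\rangle:\delta'\vdash t)=(\langle w,\chi_\alpha\rangle:\alpha_1=n-t)\) in terms of \((\langle w,\chi_\beta\rangle:\beta_1\ge n-t+1)\), which is exactly the assertion of the lemma. I expect the Kostka-number identity to be the only step requiring any care; the remainder is bookkeeping together with a triangularity argument already available in the paper.
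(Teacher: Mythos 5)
Your proposal is correct and rests on the same two pillars as the paper's own proof: the observation that \(\langle w,\xi_{\alpha}\rangle=0\) for every \(\alpha\) with \(\alpha_1=n-t\) when \(w\) is supported on the \(t\)-derangements (a \(t\)-derangement fixes no such tabloid, since the complement of the first row would be a fixed \(t\)-set), together with the unitriangularity of the Kostka matrix. The only difference is presentational: the paper invokes Corollary~\ref{corr:uppertriangular} twice to pass back and forth between the \(\chi\)'s and the \(\xi\)'s, whereas you unwind the same triangular structure into an explicit \(p_t\times p_t\) system and identify the relevant minor as the Kostka matrix of \(S_t\) (an identification the paper itself makes, in essence, in Proposition~\ref{prop:independent}).
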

\begin{proof}
Recall from Corollary \ref{corr:uppertriangular} that for any partition \(\gamma\) of \(n\), we have
\[\textrm{Span}\{\chi_{\alpha}:\ \alpha \geq \gamma\} = \textrm{Span}\{\xi_{\alpha}:\ \alpha \geq \gamma\}.\]
Observe that for any \(s \geq 0\),
\[\alpha_1 \geq n-s\quad \Leftrightarrow \quad \alpha \geq (n-s,1^{s}).\]
It follows that for {\em any} function \(w\), \((\langle w,\chi_{\alpha}\rangle:\ \alpha_1 \geq n-t+1)\) determines \((\langle w,\xi_{\alpha}\rangle:\ \alpha_1 \geq n-t+1)\). Recall that for any partition \(\alpha\) of \(n\), and any \(\sigma \in S_n\), \(\xi_{\alpha}(\sigma)\) is simply the number of \(\alpha\)-tabloids fixed by \(\sigma\). If \(w\) is supported on the \(t\)-derangments of \(S_n\), then \(\langle w, \xi_{\alpha} \rangle = 0\) whenever \(\alpha_1 = n-t\), since in this case a \(t\)-derangement cannot fix any \(\alpha\)-tabloid. Hence, \((\langle w,\chi_{\alpha}\rangle:\ \alpha_1 \geq n-t+1)\) determines \((\langle w,\xi_{\alpha}\rangle:\ \alpha_1 \geq n-t)\). But for any function \(w\), \((\langle w,\xi_{\alpha}\rangle:\ \alpha_1 \geq n-t)\) in turn determines \((\langle w,\chi_{\alpha}\rangle:\ \alpha_1 \geq n-t)\). It follows that if \(w\) is any function supported on the \(t\)-derangements of \(S_n\), then \((\langle w,\chi_{\alpha}\rangle:\ \alpha_1 \geq n-t+1)\) determines \((\langle w, \chi_{\alpha} \rangle: \alpha_1=n-t)\), as required.
\end{proof}

Note that if conditions 1 and 2 hold, and condition 3 holds for all \(s \in [t-1]\), then condition 3 holds for \(s=t\) also, since by Corollary \ref{corr:dims},
\[\sum_{s=0}^{t}\langle w, \chi_{(n-s,s)}\rangle = \langle w, \xi_{(n-t,t)}\rangle = 0.\]
It turns out that if, in addition, \(\langle w , \chi_{\alpha} \rangle = 0\) for all \(\alpha \in \mathcal{F}_{n,t-1} \setminus \mathcal{C}_{n,t}\), then condition 4 is satisfied by all \(\alpha \in \mathcal{F}_{n,t} \setminus \mathcal{C}_{n,t}\). This is the content of the following.

\begin{claim}
\label{claim:equiv}
If \(w\) is any class function satisfying
\begin{itemize}
 \item \(w\) is supported on the \(t\)-derangements of \(S_n\);
\item \(\langle w , \chi_{(n)} \rangle = 1\);
\item \(\langle w , \chi_{(n-s,s)} \rangle = -\frac{{n \choose s}-{n \choose s-1}}{{n \choose t}-1}\ \forall s \in [t-1]\);
\item \(\langle w , \chi_{\alpha} \rangle =0\) for all \(\alpha \in \mathcal{F}_{n,t-1} \setminus \mathcal{C}_{n,t}\),
\end{itemize}
then
\[\max\{|\langle w , \chi_{\alpha} \rangle|/f^\alpha:\ \alpha_1=n-t,\ \alpha \neq (n-t,t)\} \leq O_t(n^{-(t+1)}).\]
\end{claim}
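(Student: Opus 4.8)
The plan is to expand $\chi_\alpha$ in terms of the permutation characters $\xi_\beta$, use that the support of $w$ kills $\langle w,\xi_\beta\rangle$ when $\beta_1=n-t$ and makes it $1+O_t(1/n)$ otherwise, and then extract the leading term from a ``coefficient of the trivial character'' computation; the hypothesis $\alpha\ne(n-t,t)$ will make that leading term vanish.

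Throughout, write $\alpha=(n-t,\mu)$ with $\mu\vdash t$; the assumption $\alpha\ne(n-t,t)$ says $\mu\ne(t)$. I would first record two consequences of the conditions on $w$. \emph{(i)} If $\gamma\vdash n$ has $\gamma_1=n-t$, then no $t$-derangement fixes a $\gamma$-tabloid (the rows below the first would be a fixed $t$-set), so $\xi_\gamma$ vanishes on the support of $w$ and $\langle w,\xi_\gamma\rangle=0$. \emph{(ii)} If $\beta_1\ge n-t+1$, then by Young's Rule $\xi_\beta=\sum_{\gamma\ge\beta}K_{\gamma,\beta}\chi_\gamma$, and every $\gamma$ appearing has $\gamma_1\ge n-t+1$, i.e.\ $\gamma\in\mathcal F_{n,t-1}$; among these the only $\gamma$ with $\langle w,\chi_\gamma\rangle\ne 0$ are the critical ones $(n-s,s)$, $0\le s\le n-\beta_1\le t-1$. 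Since $K_{(n),\beta}=1$, $|K_{(n-s,s),\beta}|=O_t(1)$ (entries of the $n$-independent minor $\tilde K$ of Proposition \ref{prop:independent}), and $|\langle w,\chi_{(n-s,s)}\rangle|=\frac{\binom{n}{s}-\binom{n}{s-1}}{\binom{n}{t}-1}=O_t(1/n)$ for $1\le s\le t-1$, I obtain
\[
\langle w,\xi_\beta\rangle=1+\eta_\beta,\qquad |\eta_\beta|=O_t(1/n)\quad(\beta_1\ge n-t+1).
\]

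Next I would write $\chi_\alpha=\sum_{\beta\ge\alpha}c_{\alpha\beta}\,\xi_\beta$ — the inverse of Young's Rule, equivalently the determinantal formula (\ref{eq:determinantalformula}) — where $c_{\alpha\beta}$ is an entry of $K^{-1}$. Splitting the sum according as $\beta_1=n-t$ or $\beta_1\ge n-t+1$, discarding the first group by (i) and using (ii) on the second,
\[
\langle w,\chi_\alpha\rangle=\sum_{\substack{\beta\ge\alpha\\ \beta_1\ge n-t+1}}c_{\alpha\beta}(1+\eta_\beta)=\Bigl(\sum_{\substack{\beta\ge\alpha\\ \beta_1\ge n-t+1}}c_{\alpha\beta}\Bigr)+\sum_{\substack{\beta\ge\alpha\\ \beta_1\ge n-t+1}}c_{\alpha\beta}\,\eta_\beta .
\]
Here $\alpha$ and every $\beta$ lie in $\mathcal F_{n,t}$, so each $c_{\alpha\beta}$ is an entry of the $n$-independent minor $\tilde K^{-1}$ and hence is $O_t(1)$; as there are at most $q_{t-1}=O_t(1)$ terms, the second sum is $O_t(1/n)$. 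For the first sum: taking the coefficient of $\chi_{(n)}$ on both sides of $\chi_\alpha=\sum_\beta c_{\alpha\beta}\xi_\beta$, and using that $\chi_{(n)}$ occurs in each $\xi_\beta$ with multiplicity $K_{(n),\beta}=1$ while it does not occur in $\chi_\alpha$ (as $\alpha\ne(n)$), gives $\sum_{\beta\ge\alpha}c_{\alpha\beta}=0$, whence $\sum_{\beta\ge\alpha,\,\beta_1\ge n-t+1}c_{\alpha\beta}=-\sum_{\beta\ge\alpha,\,\beta_1=n-t}c_{\alpha\beta}$. The partitions with $\beta_1=n-t$ are exactly $\beta=(n-t,\nu)$, $\nu\vdash t$; these form a contiguous interval in lexicographic order on which (for $n\ge 2t$) the Kostka matrix of $S_n$ restricts to the Kostka matrix of $S_t$, and since $K$ is upper-triangular this interval is a diagonal block, so $c_{(n-t,\mu),(n-t,\nu)}$ equals the corresponding inverse-Kostka entry $c^{S_t}_{\mu,\nu}$ of $S_t$. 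Therefore $-\sum_{\beta\ge\alpha,\,\beta_1=n-t}c_{\alpha\beta}=-\sum_{\nu\ge\mu}c^{S_t}_{\mu,\nu}$, which is minus the coefficient of the trivial $S_t$-character in the irreducible character $\chi^{S_t}_\mu$, hence equals $-[\mu=(t)]=0$.

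Combining, $\langle w,\chi_\alpha\rangle=O_t(1/n)$ uniformly over all $\alpha$ with $\alpha_1=n-t$, $\alpha\ne(n-t,t)$; since $f^\alpha>e^{-t}\binom{n}{t}\ge c_t n^t$ for a constant $c_t>0$ (Lemma \ref{lemma:dimension-for-long}), we conclude $|\langle w,\chi_\alpha\rangle|/f^\alpha=O_t(n^{-(t+1)})$, as required. I expect the main obstacle to be the block identification in the previous paragraph: one must verify that the partitions of $n$ with first part $n-t$ form a lexicographic interval that is closed under the triangular structure of $K$ (so the block of $K^{-1}$ is the inverse of the block of $K$) and that this block of $K$ is literally the Kostka matrix of $S_t$ (valid for $n\ge 2t$). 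This is precisely where the exclusion of $(n-t,t)$ matters: for $\alpha=(n-t,t)$ the same computation yields $\langle w,\chi_\alpha\rangle=-1+O_t(1/n)$, so $|\langle w,\chi_\alpha\rangle|/f^\alpha$ is of order $n^{-t}$ rather than $n^{-(t+1)}$. Everything else reduces to routine $O_t$-estimates supplied by Proposition \ref{prop:independent} and Lemma \ref{lemma:dimension-for-long}.
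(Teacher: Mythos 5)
Your proposal is correct, and for the decisive cancellation it takes a genuinely different route from the paper. The paper also reduces to the two facts you call (i) and (ii) --- $\langle w,\xi_\beta\rangle=0$ when $\beta_1=n-t$ and $\langle w,\xi_\beta\rangle=1+O_t(1/n)$ when $\beta_1\ge n-t+1$ --- but it then expands $\chi_\alpha$ via the determinantal formula as $\sum_{\pi\in S_{[t+1]}}\sgn(\pi)\,\xi_{\alpha-\mathrm{id}+\pi}$ and kills the main term $\sum_\pi\sgn(\pi)f_\alpha(\pi)$ (with $f_\alpha$ a $0/1$ indicator of which $\pi$ give a surviving $\xi$) by an explicit sign-reversing involution: $\pi\mapsto\pi(2\,3)$ when $\alpha_3\ge 2$ and $\pi\mapsto\pi(1\,3)$ when $\alpha_3=1$, the hypothesis $\alpha\ne(n-t,t)$ entering only through $\alpha_3\ge 1$. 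You instead work with the inverse Kostka matrix, use $\sum_{\beta}c_{\alpha\beta}=0$ (the coefficient of the trivial character) to convert the surviving sum into $-\sum_{\beta_1=n-t}c_{\alpha\beta}$, and then identify the diagonal block of $K$ on $\{\beta:\beta_1=n-t\}$ with the Kostka matrix of $S_t$, so that the main term becomes minus the multiplicity of the trivial character of $S_t$ in $\chi^{S_t}_\mu$, which vanishes precisely because $\mu\ne(t)$. All the supporting steps check out: the block identification is legitimate because $\{\beta:\beta_1=n-t\}$ is a contiguous interval in the lexicographic order, $K$ is upper-triangular with respect to that order, and (for $n\ge 2t$) the $n-t$ forced $1$'s occupy exactly the first row, reducing the block to $K^{S_t}$; the off-diagonal coefficients $c_{\alpha\beta}$ are $O_t(1)$ by Proposition \ref{prop:independent}. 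Your version is arguably more conceptual --- it explains structurally why $(n-t,t)$ must be excluded (there the main term is $-1$, consistent with $\langle w,\chi_{(n-t,t)}\rangle=-\frac{\binom{n}{t}-\binom{n}{t-1}}{\binom{n}{t}-1}$) --- while the paper's involution argument is more elementary and avoids any discussion of inverse Kostka numbers. The final step (dividing by $f^\alpha>e^{-t}\binom{n}{t}$ via Lemma \ref{lemma:dimension-for-long}) is the same in both, and indeed yields the stated $O_t(n^{-(t+1)})$.
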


\begin{proof}[Proof of Claim \ref{claim:equiv}:]
Let \(w\) be a class function satisfying the conditions of the claim. We first calculate the inner products \(\langle w,\xi_{\beta}\rangle:\ \beta \in \mathcal{F}_{n,t-1}\). We have
\[\langle w , \xi_{\beta} \rangle = 1-\sum_{s=0}^{t-1} K_{(n-s,s),\beta} \frac{{n \choose s}-{n \choose s-1}}{{n \choose t}-1} = 1-\epsilon_{n,t,\beta}\quad (\beta \in \mathcal{F}_{n,t-1}),\]
where we define
\[\epsilon_{n,t,\beta} = \sum_{s=0}^{t-1} K_{(n-s,s),\beta} \frac{{n \choose s}-{n \choose s-1}}{{n \choose t}-1}\quad (\beta \in \mathcal{F}_{n,t-1}).\]
By Proposition \ref{prop:independent}, for \(n \geq 2(t-1)\), the top-left minor of the Kostka matrix \(K\) indexed by the partitions in \(\mathcal{F}_{n,t-1}\) is independent of \(n\). It follows that for each \(t\), there exists \(D_t >0\) such that for all \(n\) and all \(\beta \in \mathcal{F}_{n,t-1}\),
\[K_{(n-s,s),\beta} \leq D_t.\]
Therefore, if \(t \in \mathbb{N}\) is fixed, we have
\[\max\{|\epsilon_{n,t,\beta}|:\ \beta \in \mathcal{F}_{n,t-1}\} = O_t(1/n).\]

As observed in the proof of the previous claim, we have \(\langle w,\xi_\beta \rangle = 0\) for all partitions \(\beta\) with \(\beta_1=n-t\). This will enable us to express the inner products \((\langle w, \chi_{\alpha} \rangle: \alpha_1=n-t)\) in terms of the inner products \((\langle w,\xi_{\beta}\rangle:\ \beta_1 \geq n-t+1)\).

Recall the determinantal formula (\ref{eq:determinantalformula}) for expressing an irreducible character \(\chi_{\alpha}\) in terms of the permutation characters \(\{\xi_{\beta}:\ \beta \geq \alpha\}\):
\[\chi_{\alpha} = \sum_{\pi \in S_{n}} \sgn(\pi) \xi_{\alpha - \textrm{id}+\pi}.\]

Let \(\alpha\) be any partition of \(n\) with \(\alpha_{1} = n-t\), \(\alpha \neq (n-t,t)\). Then we have
\[\langle w, \chi_{\alpha} \rangle = \sum_{\pi \in S_{n}} \sgn(\pi)\langle w , \xi_{\alpha - \textrm{id}+\pi} \rangle.\]
Note that \(\xi_{\alpha - \textrm{id}+\pi} = 0\) if \(\pi\) does not fix \(\{t+2,t+3,\ldots,n\}\) pointwise, i.e. if \(\pi \notin S_{[t+1]}\). Indeed, if \(\pi \notin S_{[t+1]}\), then \(\pi(j) < j\) for some \(j \geq t+2\). Since \(\alpha_j = 0\), we have \((\alpha + \textrm{id} +\pi)(j) = -(j-\pi(j)) <0\), so \(\xi_{\alpha+\textrm{id}+\pi} = 0\). Moreover, \(\langle w, \xi_{\alpha - \textrm{id}+\pi}\rangle = 0\) if \(\alpha_j - j + \pi(j) <0\) for some \(j \geq 2\), or if \(\pi(1)=1\) (since in this case, \((\alpha-\textrm{id}+\pi)(1) = \alpha_1 = n-t\)); otherwise, \(\langle w, \xi_{\alpha - \textrm{id}+\pi}\rangle = 1 - \epsilon_{n,t,\beta}\) for some \(\beta \in \mathcal{F}_{n,t-1}\). Hence,
\begin{align*}
\langle w, \chi_{\alpha} \rangle & = \sum_{\pi \in S_{[t+1]}} \sgn(\pi)\langle w, \xi_{\alpha - \textrm{id}+ \pi} \rangle\\
& = \sum_{\pi \in S_{[t+1]}} \sgn(\pi)(f_{\alpha}(\pi) - O_{t}(1/n))\\
& = \sum_{\pi \in S_{[t+1]}} \sgn(\pi)f_{\alpha}(\pi) - O_t(1/n),
\end{align*} 
where we define
\[f_{\alpha}(\pi) = 1\{\pi(1) \neq 1\textrm{ and }\alpha_j-j+\pi(j) \geq 0 \textrm{ for all }j \geq 2\}.\]
We will now show that
\begin{equation}\label{eq:determinantzero} \sum_{\pi \in S_{[t+1]}} \sgn(\pi)f_{\alpha}(\pi) = 0.\end{equation}
Since \(\alpha_1=n-t\), and \(\alpha \neq (n-t,t)\), we must have \(\alpha_3 \geq 1\). We split into 2 cases.\\
\\
\textit{Case (i):} \(\alpha_3\geq 2\).\\
In this case, (\ref{eq:determinantzero}) follows from
\begin{equation}\label{eq:23swap} f_{\alpha}(\pi) = f_{\alpha}(\pi(2\ 3)) \quad \forall \pi \in S_{n}.\end{equation}
To prove (\ref{eq:23swap}), note firstly that \(\pi(1)=1\) if and only if \(\pi(2\ 3)(1)=1\). Secondly, since \(\alpha_2 \geq \alpha_3 \geq 2\), we have \(\alpha_2-2+\rho(2) \geq \rho(2) \geq 1\) and \(\alpha_3 - 3+\rho(3) \geq \rho(3)-1 \geq 0\) for all permutations \(\rho \in S_{n}\). Hence,
\[\alpha_j-j+\pi(j) \geq 0 \ \forall j \geq 2 \quad \Leftrightarrow \quad \alpha_j-j+\pi(2\ 3)(j) \geq 0 \ \forall j \geq 2,\]
proving (\ref{eq:23swap}).\\
\\
\textit{Case (ii):} \(\alpha_{3} = 1\).\\
In this case, (\ref{eq:determinantzero}) follows from
\begin{equation}\label{eq:13swap} f_{\alpha}(\pi) = f_{\alpha}(\pi(1\ 3))\quad \forall \pi \in S_n.\end{equation}
To prove (\ref{eq:13swap}), let \(\pi \in S_n\) with \(f_{\alpha}(\pi) = 0\); then \(\pi(1)=1\) or \(\alpha_j - j+\pi(j) <0\) for some \(j \geq 2\). If \(\pi(1)=1\), then \(\alpha_3 - 3 +\pi(1\ 3)(3) = -1 <0\), so \(f_{\alpha}(\pi(1\ 3)) = 0\). If \(\alpha_j - j+\pi(j) <0\) for some \(j \geq 2\), \(j \neq 3\), then \(\alpha_j - j+\pi(1\ 3)(j) <0\), so \(f_{\alpha}(\pi(1\ 3)) = 0\). If \(\alpha_3-3+\pi(3) <0\), then we must have \(\pi(3)=1\), so \(\pi(1\ 3)(1)=1\), and therefore \(f_{\alpha}(\pi(1\ 3)) = 0\). Hence,
\[f_{\alpha}(\pi) = 0\quad \Rightarrow \quad f_{\alpha}(\pi(1\ 3)) = 0\quad \forall \pi \in S_n.\]
Applying \((1\ 3)\) to both arguments gives
\[f_{\alpha}(\pi(1\ 3)) = 0\quad \Rightarrow \quad f_{\alpha}(\pi) = 0\quad \forall \pi \in S_n,\]
proving (\ref{eq:13swap}).

It follows that
\[\max\{|\langle w, \chi_{\alpha} \rangle|:\ \alpha_1=n-t,\ \alpha \neq (n-t,t)\} = O_t(1/n).\]
By Lemma \ref{lemma:dimension-for-long}, we have
\[f^{\alpha} > e^{-t} {n \choose t}\]
for any partition \(\alpha\) of \(n\) such that \(\alpha_{1}=n-t\). Hence,
\[\max\{|\langle w, \chi_{\alpha} \rangle|/f^{\alpha}:\ \alpha_1=n-t,\ \alpha \neq (n-t,t)\} \leq O_t(1/n),\]
proving the claim.
\end{proof}

Hence, it suffices for \(w\) to satisfy the following conditions:

\begin{enumerate}
\item \(w\) is supported on the \(t\)-derangements of \(S_n\);
\item \(\langle w , \chi_{(n)} \rangle = 1\);
\item \(\langle w , \chi_{(n-s,s)} \rangle = -\frac{{n \choose s}-{n \choose s-1}}{{n \choose t}-1}\ \forall s \in [t-1]\);
\item \(\langle w , \chi_{\alpha} \rangle = 0\) for all \(\alpha \in \mathcal{F}_{n,t-1} \setminus \mathcal{C}_{n,t}\);
\item \(\max_{\sigma \in S_n} |w(\sigma)| \leq O_t(1/n!)\);
\item \(\langle w , \chi_{\alpha} \rangle = 0\) for all tall partitions \(\alpha\).
\end{enumerate}

In order to satisfy these conditions, we will in fact show that there exist two class functions \(w^{+}\) and \(w^{-}\), supported on even and odd permutations respectively, each satisfying conditions 1--5; \(w = \tfrac{1}{2}(w^{+}+w^{-})\) will then satisfy all 6 conditions. By Lemma \ref{lemma:determine}, conditions 1--4 determine \((\langle w, \chi_{\alpha} \rangle:\ \alpha_1 \geq n-t)\), so if \(w^{+}\) and \(w^{-}\) both satisfy conditions 1--4, then
\[\langle w^{+}, \chi_{\alpha} \rangle = \langle w^{-}, \chi_{\alpha}\rangle \quad \textrm{for each }\alpha \in \mathcal{F}_{n,t}.\]
By Theorem \ref{thm:transpose-sign}, for any partition \(\alpha\), we have \(\chi_{\alpha^{\top}} = \chi_{\alpha} \cdot \sgn\). Recall that the tall partitions are precisely the transposes of the fat partitions. For any fat partition \(\alpha\), we have
\begin{align*}
\langle w, \chi_{\alpha^{\top}} \rangle & = \langle w, \chi_{\alpha} \cdot \sgn \rangle\\
& = \tfrac{1}{2}(\langle w^{+}, \chi_{\alpha}\cdot \sgn \rangle + \langle w^{-}, \chi_{\alpha} \cdot \sgn\rangle)\\
&= \tfrac{1}{2}(\langle w^{+}, \chi_{\alpha} \rangle - \langle w^{-}, \chi_{\alpha} \rangle)\\
&=0, 
\end{align*}
so condition 6 is satisfied.

It remains to show that there exist two class functions \(w^{+}\) and \(w^{-}\), supported on respectively even and odd permutations, each satisfying conditions 1--5. Equivalently, in terms of the permutation characters \(\{\xi_{\beta}:\ \beta \in \mathcal{F}_{n,t-1}\}\), we seek to satisfy the following:

\begin{itemize}
\item \(w^{\pm}\) is supported on the \(t\)-derangements of \(S_n\);
\item \(\langle w^{\pm} , \xi_{\beta} \rangle = 1-\sum_{s=0}^{t-1} K_{(n-s,s),\beta} \frac{{n \choose s}-{n \choose s-1}}{{n \choose t}-1}\) for all \(\beta \in \mathcal{F}_{n,t-1}\);
\item \(\max_{\sigma \in S_n} |w^{\pm}(\sigma)| \leq O_t(1/n!)\).
\end{itemize}
\begin{flushright}\((*)\)\end{flushright}
As before, for each partition \(\beta \in \mathcal{F}_{n,t-1}\), let
\[\epsilon_{n,t,\beta} = \sum_{s=0}^{t-1} K_{(n-s,s),\beta} \frac{{n \choose s}-{n \choose s-1}}{{n \choose t}-1};\]
let
\[\eta_{n,t,\beta} = 1-\epsilon_{n,t,\beta}.\]
The second condition above becomes
\begin{equation}
 \label{eq:cond}
\langle w^{\pm} , \xi_{\beta} \rangle = \eta_{n,t,\beta} \quad \forall \beta \in \mathcal{F}_{n,t-1}.
\end{equation}

Recall that
\[\max\{|\epsilon_{n,t,\beta}|:\ \beta \in \mathcal{F}_{n,t-1}\} = O_t(1/n),\]
so if \(n\) is sufficiently large depending upon \(t\), then \(\eta_{n,t,\beta} \in [0,1]\) for all \(\beta \in \mathcal{F}_{n,t-1}\).

Let \(\tilde{N} = (\xi_{\beta}(X_\alpha))_{\beta,\alpha \in \mathcal{F}_{n,t-1}}\) denote the top-left minor of the permutation character table of \(S_{n}\), indexed by the partitions in \(\mathcal{F}_{n,t-1}\). By Corollary \ref{corr:uppertriangular}, \(\tilde{N}\) is independent of \(n\) for \(n > 2(t-1)\). Suppose \(u\) is a class function supported only on the conjugacy classes with cycle-type in \(\mathcal{F}_{n,t-1}\). For each \(\alpha \in \mathcal{F}_{n,t-1}\), let \(u_{\alpha}\) be the value taken by \(u\) on \(X_\alpha\). Then
\[\langle u, \xi_{\beta} \rangle = \sum_{\alpha \in \mathcal{F}_{n,t-1}} |X_{\alpha}| u_{\alpha}\tilde{N}_{\beta, \alpha} \quad (\beta \in \mathcal{F}_{n,t-1}).\]
Since \(\tilde{N}\) is invertible, there is a unique choice of the \(u_{\alpha}\)'s satisfying (\ref{eq:cond}). However, there are two problems: we cannot choose \(u\) to be supported only on even permutations (or only on odd permutations), and since \(|X_\alpha| \leq n!/(n-t+1)\) for each \(\alpha \in \mathcal{F}_{n,t-1}\), we cannot demand that \(\max\{|u_\alpha|:\ \alpha \in \mathcal{F}_{n,t-1}\} \leq O_t(1/n!)\). Indeed, if \(|u_\alpha| \leq K_t/n!\) for each \(\alpha \in \mathcal{F}_{n,t-1}\), then 
\[\left|\sum_{\alpha \in \mathcal{F}_{n,t-1}} |X_{\alpha}| u_{\alpha}\tilde{N}_{\beta, \alpha} \right| \leq \frac{K_t |\mathcal{F}_{n,t-1}|}{n-t+1} < \frac{2K_t \sum_{s=0}^{t-1} p_s}{n} = O_t(1/n)\]
for all \(\beta \in \mathcal{F}_{n,t-1}\), provided \(n > 2(t-1)\); this is \(< 1-\epsilon_{n,t,\beta}\) for all \(\beta \in \mathcal{F}_{n,t-1}\), provided \(n\) is sufficiently large.

We solve both problems simultaneously by exhibiting, for each \(\alpha \in \mathcal{F}_{n,t-1}\), a large collection of conjugacy classes of \(S_n\) which fix no \(t\)-set, and on which each permutation character \(\xi_{\beta}:\ \beta \in \mathcal{F}_{n,t-1}\) takes the same value as it does on \(X_{\alpha}\). These conjugacy classes can be chosen to be all even (or all odd), with total size at least \(B_t n!\) for some \(B_t>0\) depending upon \(t\) alone.

We now define these collections. Let \(\alpha \in \mathcal{F}_{n,t-1}\); suppose \(\alpha = (n-s,\alpha_{2},\ldots,\alpha_{l})\), where \(0 \leq s \leq t-1\) and \(\alpha_{l} \geq 1\). Let \(\mathcal{P}_{n,t}(\alpha)\) be the collection of partitions that can be obtained from \(\alpha\) by subdividing the part of size \(n-s\) into parts of size \(> t\). Let \(\mathcal{S}_{n,t}(\alpha)\) be the family of permutations with cycle-type in \(\mathcal{P}_{n,t}(\alpha)\); let \(\mathcal{S}^{+}_{n,t}(\alpha)\) and \(\mathcal{S}_{n,t}^{-}(\alpha)\) be the families of respectively even and odd permutations in \(\mathcal{S}_{n,t}(\alpha)\). Notice that \(\{\mathcal{P}_{n,t}(\alpha): \alpha \in \mathcal{F}_{n,t-1}\}\) are pairwise disjoint, and that \(\mathcal{S}_{n,t}^{+}(\alpha)\) and \(\mathcal{S}_{n,t}^{-}(\alpha)\) are unions of conjugacy classes of \(t\)-derangements.

For each \(j \in \mathbb{N}\), let \(a_j\) be the number of parts of \(\alpha\) of size \(j\). Observe that
\begin{align*}
\min(|\mathcal{S}_{n,t}^{+}(\alpha)|,|\mathcal{S}_{n,t}^{-}(\alpha)|) & = \frac{1}{\alpha_{2}\ldots\alpha_{l}\prod_{j} a_{j}!} \frac{n!}{(n-s)!} \min\{E_{n-s,t},O_{n-s,t}\}\\
& \geq \frac{L_{t}n!}{\alpha_{2}\ldots\alpha_{l}\prod_{j} a_{j}!}.
\end{align*}
Here, the equality follows from the fact that to choose a permutation in \(\mathcal{S}_{n,t}^{+}(\alpha)\) or \(\mathcal{S}_{n,t}^{-}(\alpha)\), we can first choose the \(\alpha_{2},\ldots,\alpha_{l}\)-cycles, and then choose an appropriately signed permutation of the other \(n-s\) numbers which has no cycle of length \(\leq t\). The inequality follows from Lemma \ref{lemma:noshortcyclesbound}. By the AM/GM inequality, we have
\[\alpha_{2}\ldots\alpha_{l}\prod_{j} a_{j}! \leq (s/l)^l l! \leq s^l \leq (t-1)^{t-1},\]
and therefore
\[\min(|\mathcal{S}_{n,t}^{+}(\alpha)|,|\mathcal{S}_{n,t}^{-}(\alpha)|) \geq L_t n!/(t-1)^{t-1} = B_t n!,\]
where \(B_t >0\) depends upon \(t\) alone.

We must now show that for each \(\beta \in \mathcal{F}_{n,t-1}\), the permutation character \(\chi_{\beta}\) takes the value \(\xi_{\beta}(X_\alpha)\) on all of \(\mathcal{S}_{n,t}(\alpha)\). Take any permutation \(\sigma \in \mathcal{S}_{n,t}(\alpha)\), where \(\alpha = (n-s,\alpha_{2},\ldots,\alpha_{l})\) with \(0 \leq s \leq t-1\), and let \(\sigma'\) be the permutation with cycle-type \(\alpha\) produced from \(\sigma\) by merging all cycles of length \(> t\). Take any partition \(\beta = (n-r,\beta_{2},\ldots,\beta_{k})\) with \(0 \leq r \leq t-1\). We wish to evaluate \(\xi_{\beta}(\sigma)\), the number of \(\beta\)-tabloids fixed by \(\sigma\). Note that \(\sigma\) and \(\sigma'\) fix the same \(\beta\)-tabloids, since if \([T]\) is a \(\beta\)-tabloid fixed by \(\sigma\), then all numbers in \((>t)\)-cycles of \(\sigma\) must occur in the first row of \([T]\), so \([T]\) must be fixed by \(\sigma'\) as well. Hence,
\[\xi_{\beta}(\sigma) = \xi_{\beta}(\sigma') = \xi_{\beta}(X_{\alpha}),\]
as desired.

We can now define the class functions \(w^{+}\) and \(w^{-}\). We will let \(w^{+}\) have value \(x_{\gamma}^{+}\) on \(\mathcal{S}_{n,t}^{+}(\gamma)\) for each partition \(\gamma \in \mathcal{F}_{n,t-1}\), and zero elsewhere; to satisfy condition (\ref{eq:cond}), we need 
\[\sum_{\gamma \in \mathcal{F}_{n,t-1}} |\mathcal{S}_{n,t}^{+}(\gamma)|x^{+}_{\gamma}  \tilde{N}_{\beta,\gamma} = \eta_{n,t,\beta}\quad \forall \beta \in \mathcal{F}_{n,t-1},\]
i.e.
\[x^{+}_{\gamma} = \sum_{\beta \in \mathcal{F}_{n,t-1}} (\tilde{N}^{-1})_{\gamma, \beta} \eta_{n,t,\beta}/ |\mathcal{S}^{+}_{n,t}(\gamma)|\quad \forall \gamma \in \mathcal{F}_{n,t-1}.\]
Similarly, we will let \(w^{-}\) have value \(x_{\gamma}^{-}\) on \(\mathcal{S}_{n,t}^{-}(\gamma)\) for each partition \(\gamma \in \mathcal{F}_{n,t-1}\), and zero elsewhere; to satisfy (\ref{eq:cond}), we need 
\[\sum_{\gamma \in \mathcal{F}_{n,t-1}} |\mathcal{S}_{n,t}^{-}(\gamma)| x^{-}_{\gamma} \tilde{N}_{\beta,\gamma} = \eta_{n,t,\beta}\quad \forall \beta \in \mathcal{F}_{n,t-1},\]
i.e.
\[x^{-}_{\gamma} = \sum_{\beta \in \mathcal{F}_{n,t-1}} (\tilde{N}^{-1})_{\gamma, \beta} \eta_{n,t,\beta}/ |\mathcal{S}^{-}_{n,t}(\gamma)|\quad \forall \gamma \in \mathcal{F}_{n,t-1}.\]
By construction, \(w^{+}\) and \(w^{-}\) are class functions supported on respectively even and odd \(t\)-derangments, and satisfy (\ref{eq:cond}). It remains to show that \(|w^{\pm}(\sigma)| \leq K_t/n!\) for all \(\sigma \in S_n\), for some \(K_t\) depending only upon \(t\). Since \(\tilde{N}\) is invertible and independent of \(n\) for \(n > 2(t-1)\), \(\tilde{N}^{-1}\) is also independent of \(n\) for \(n > 2(t-1)\), so we have
\[\max\{|(\tilde{N}^{-1})_{\gamma,\beta}|:\ \beta,\gamma \in \mathcal{F}_{n,t-1}\} \leq O_t(1).\]
We have \(|\mathcal{F}_{n,t-1}| \leq O_t(1)\), and
\[|\mathcal{S}^{+}_{n,t}(\gamma)|,|\mathcal{S}^{-}_{n,t}(\gamma)| \geq B_t n!\quad \forall \gamma \in \mathcal{F}_{n,t-1},\]
where \(B_t>0\) depends only upon \(t\). Finally, we have \(\eta_{n,t,\beta} \in [0,1]\) for all \(\beta \in \mathcal{F}_{n,t-1}\), provided \(n\) is sufficiently large depending upon \(t\). It follows that
\[\max\{|x^{\pm}_\gamma|:\ \gamma \in \mathcal{F}_{n,t-1}\} \leq O_t(1/n!),\]
as required. Hence, \(w = \tfrac{1}{2}(w^{+}+w^{-})\) satisfies all the conditions \((*)\).

Applying Theorem \ref{thm:pseudo} to the graph \(\Gamma_{(t)}\) and the pseudoadjacency matrix \(A^{(w)}\) constructed above, we obtain the upper bound in Theorem \ref{thm:main}, together with an algebraic characterization of the extremal families.
\begin{theorem}
\label{thm:upperbound}
If \(n\) is sufficiently large depending on \(t\), and \(\mathcal{A} \subset S_n\) is \(t\)-set-intersecting, then
\[|\mathcal{A}| \leq t!(n-t)!.\]
If equality holds, then the characteristic function of \(\mathcal{A}\) lies in the critical subspace \(U_t\).
\end{theorem}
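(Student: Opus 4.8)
The plan is to obtain Theorem~\ref{thm:upperbound} as an essentially immediate consequence of Theorem~\ref{thm:pseudo}, applied to the $t$-derangement graph $\Gamma_{(t)}$ with the operator $A^{(w)}$ just constructed playing the role of pseudoadjacency matrix. The first step is to check that $A^{(w)}$ really is a pseudoadjacency matrix for $\Gamma_{(t)}$. It is symmetric: since $w$ is a class function and every permutation of $S_n$ is conjugate to its inverse, $A^{(w)}_{\sigma,\pi} = w(\sigma^{-1}\pi) = w((\sigma^{-1}\pi)^{-1}) = w(\pi^{-1}\sigma) = A^{(w)}_{\pi,\sigma}$. All its row and column sums equal $\sum_{\sigma\in S_n} w(\sigma)$. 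And if $\{\sigma,\pi\}\notin E(\Gamma_{(t)})$ then $\sigma(x)=\pi(x)$ for some $x\in[n]^{(t)}$, so $\sigma^{-1}\pi$ fixes the $t$-set $x$ and is therefore not a $t$-derangement; since $w$ is supported on the $t$-derangements, $A^{(w)}_{\sigma,\pi} = w(\sigma^{-1}\pi) = 0$.

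Next I would record the eigenvalues. By Theorem~\ref{thm:normalcayley}, applied to each conjugacy-class Cayley graph in the linear combination defining $A^{(w)}$, each isotypical ideal $U_\alpha$ is an eigenspace of $A^{(w)}$ with eigenvalue $\lambda^{(w)}_\alpha = \tfrac{1}{f^\alpha}\langle w,\chi_\alpha\rangle$. Since $w$ satisfies conditions 1--6, we have $\lambda_{\const} = \lambda^{(w)}_{(n)} = 1$; also $\lambda^{(w)}_{(n-s,s)} = -1/(\binom{n}{t}-1)$ for every $s\in[t]$ (for $s\le t-1$ this is condition 3, and for $s=t$ it follows from conditions 1--3 together with $\sum_{s=0}^t\chi_{(n-s,s)}=\xi_{(n-t,t)}$ from Corollary~\ref{corr:dims}); and $|\lambda^{(w)}_\alpha|\le C_t/n^{t+1}$ for every non-critical partition $\alpha$ (immediately for tall $\alpha$, where the eigenvalue is $0$ by condition 6; for medium $\alpha$ by Claim~\ref{claim:mediumevals} via condition 5; and for fat non-critical $\alpha$ by Claim~\ref{claim:equiv} together with condition 4). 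Because $\binom{n}{t}-1 = \Theta_t(n^t)$, the quantity $C_t/n^{t+1}$ is, for $n$ large depending on $t$, strictly smaller than $1/(\binom{n}{t}-1)$; hence for such $n$ the least eigenvalue of $A^{(w)}$ is $\lambda_{\min} = -1/(\binom{n}{t}-1)$, and its $\lambda_{\min}$-eigenspace is exactly $\bigoplus_{s=1}^t U_{(n-s,s)}$.

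With these eigenvalues in hand, Theorem~\ref{thm:pseudo} finishes the argument. A $t$-set-intersecting family $\mathcal{A}$ is an independent set in $\Gamma_{(t)}$, so
\[\frac{|\mathcal{A}|}{n!} \le \frac{-\lambda_{\min}}{\lambda_{\const}-\lambda_{\min}} = \frac{1/(\binom{n}{t}-1)}{1+1/(\binom{n}{t}-1)} = \frac{1}{\binom{n}{t}},\]
giving $|\mathcal{A}|\le n!/\binom{n}{t} = t!(n-t)!$. If equality holds, the equality clause of Theorem~\ref{thm:pseudo} gives $1_{\mathcal{A}} - \tfrac{|\mathcal{A}|}{n!}\mathbf{1}\in\Ker(A^{(w)}-\lambda_{\min}I) = \bigoplus_{s=1}^t U_{(n-s,s)}$; since $\mathbf{1}\in U_{(n)}$, adding it back yields $1_{\mathcal{A}}\in\bigoplus_{s=0}^t U_{(n-s,s)} = U_t$, which is the claimed conclusion.

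I do not expect a genuine obstacle at this stage: all the difficulty lay in constructing the class function $w$ with the prescribed eigenvalue profile, which has already been carried out. The only points requiring any care are the (routine) verification that $A^{(w)}$ is a bona fide pseudoadjacency matrix, and the elementary asymptotic comparison $C_t/n^{t+1} < 1/(\binom{n}{t}-1)$ that identifies $\lambda_{\min}$ and pins down its eigenspace.
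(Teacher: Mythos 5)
Your proposal is correct and is exactly the argument the paper intends: the paper states Theorem \ref{thm:upperbound} as an immediate consequence of applying Theorem \ref{thm:pseudo} to $\Gamma_{(t)}$ with the constructed matrix $A^{(w)}$, and your write-up simply makes explicit the routine verifications (symmetry via $w$ being a class function and $\sigma\sim\sigma^{-1}$ in $S_n$, the identification of $\lambda_{\min}$ and its eigenspace for large $n$, and the arithmetic giving $n!/\binom{n}{t}=t!(n-t)!$) that the paper leaves implicit.
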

Recall from Theorem \ref{thm:cosets-span} that \(U_t\) is precisely the subspace of \(\mathbb{C}[S_n]\) spanned by the characteristic functions of cosets of stabilizers of \(t\)-sets.

\section{The extremal families}

Our aim in this section is to prove the following.
\begin{theorem}
\label{thm:equality}
If \(n\) is sufficiently large depending on \(t\), and \(\mathcal{A} \subset S_n\) is \(t\)-set-intersecting with \(|\mathcal{A}| = t!(n-t)!\), then \(\mathcal{A}\) is a coset of the stabilizer of a \(t\)-set.
\end{theorem}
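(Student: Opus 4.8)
The plan is to combine the algebraic conclusion of Theorem~\ref{thm:upperbound} --- that the characteristic function $1_{\mathcal{A}}$ lies in the critical subspace $U_t$ --- with a structural result from \cite{jointpaper}, and then to finish by a combinatorial analysis of $t$-cosets.

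First I would observe that $U_t = \bigoplus_{s=0}^{t} U_{(n-s,s)}$ is contained in the span of the characteristic functions of the $t$-cosets $T_{a\mapsto b}$. Indeed, running the argument of Theorem~\ref{thm:cosets-span} with the permutation module $M^{(n-t,1^t)}$ of ordered $t$-tuples of distinct elements in place of $M^{(n-t,t)}$ shows that this span equals $\bigoplus_{\mu\in\mathcal{F}_{n,t}}U_\mu$; and every critical partition $(n-s,s)$ with $0\le s\le t$ is fat, so $U_t\subseteq\bigoplus_{\mu\in\mathcal{F}_{n,t}}U_\mu$. Hence $1_{\mathcal{A}}$ lies in the span of the $t$-coset indicators, and the result of \cite{jointpaper} (the one used there to pin down the extremal families for the Deza--Frankl conjecture) --- that, for $n$ large depending on $t$, a $\{0,1\}$-valued function lying in this span is the indicator of a disjoint union of $t$-cosets --- shows that $\mathcal{A}$ is a disjoint union of $t$-cosets. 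Since each $t$-coset has size $(n-t)!$ and $|\mathcal{A}|=t!(n-t)!$, exactly $t!$ of them occur; I would write $\mathcal{A}=\bigsqcup_{i=1}^{t!}T_{a^{(i)}\mapsto b^{(i)}}$ and set $D_i=\{a^{(i)}_1,\dots,a^{(i)}_t\}$ and $R_i=\{b^{(i)}_1,\dots,b^{(i)}_t\}$ for the domain- and range-sets of the $i$-th coset.

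The combinatorial heart of the proof is to show that all the $D_i$ coincide and all the $R_i$ coincide. The key auxiliary fact is: if $n\ge 2t$ and $a,c$ are $t$-tuples of distinct elements with $\{a_k\}\ne\{c_k\}$ as sets, then the $t$-coset $T_{a\mapsto c}$ contains a $t$-derangement. To see this, view $k\mapsto(a_k,c_k)$ as a partial injection on $[n]$: each of its cycles lies in $\{a_k\}\cap\{c_k\}$, so when $\{a_k\}\ne\{c_k\}$ these cycles have total length $\le t-1$; extend the partial injection to a permutation $\pi$ by merging its non-cyclic part together with all of $[n]\setminus\{a_k\}$ into a single long cycle, of length $\ge n-t+1>t$. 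Then no sub-multiset of the cycle-lengths of $\pi$ sums to $t$ (the short cycles total $\le t-1$, the long one exceeds $t$), so $\pi$ fixes no $t$-set, while $\pi\in T_{a\mapsto c}$. Granting this, suppose $D_i\ne D_j$ for some $i,j$; then $D_j\not\subseteq D_i$, so I can choose $\sigma\in T_{a^{(i)}\mapsto b^{(i)}}$ with $\sigma(D_j)\ne R_j$ (pick the value of $\sigma$ at some point of $D_j\setminus D_i$ outside $R_i\cup R_j$, possible for $n>2t$). Then $\sigma^{-1}T_{a^{(j)}\mapsto b^{(j)}}=T_{a^{(j)}\mapsto\sigma^{-1}(b^{(j)})}$ is a $t$-coset with domain-set $D_j$ and range-set $\sigma^{-1}(R_j)\ne D_j$, so by the auxiliary fact it contains a $t$-derangement $\sigma^{-1}\pi$ with $\pi\in T_{a^{(j)}\mapsto b^{(j)}}\subseteq\mathcal{A}$; but then $\sigma$ and $\pi$ share no $t$-set, contradicting that $\mathcal{A}$ is $t$-set-intersecting. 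Hence all $D_i$ equal a common $t$-set $D$. Each $\sigma$ in the $i$-th coset then satisfies $\sigma(D)=R_i$, and the same argument --- now any $\sigma$ in coset $i$ already has $\sigma^{-1}(R_j)\ne D$ whenever $R_i\ne R_j$ --- forces all $R_i$ to equal a common $t$-set $R$.

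Finally, every $\sigma\in\mathcal{A}$ lies in some $T_{a^{(i)}\mapsto b^{(i)}}$ and hence satisfies $\sigma(D)=R$, so $\mathcal{A}\subseteq T_{D\mapsto R}$; as $|T_{D\mapsto R}|=t!(n-t)!=|\mathcal{A}|$, we conclude $\mathcal{A}=T_{D\mapsto R}$, a coset of the stabilizer of a $t$-set, which completes the proof. I expect the main obstacle to be exactly this combinatorial step: proving the auxiliary fact about $t$-derangements inside $t$-cosets, and carefully handling the (bounded-size, hence ultimately harmless) overlaps among $D_i,D_j,R_i,R_j$, is where the genuine work lies --- which fits the abstract's remark that the combinatorial part of the proof is harder here than in \cite{jointpaper}.
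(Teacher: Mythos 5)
Your proposal is correct, and its first half (Theorem~\ref{thm:upperbound} gives $1_{\mathcal{A}}\in U_t\subseteq V_t$, the Benabbas--Friedgut--Pilpel theorem then forces $\mathcal{A}$ to be a disjoint union of $t$-cosets) is exactly the paper's reduction. Where you genuinely diverge is in the combinatorial endgame, i.e.\ the paper's Proposition~\ref{prop:equality}. The paper normalizes one coset to $C=T_{1\mapsto1,\ldots,t\mapsto t}$, characterizes which permutations $\tau$ arise as $\pi\sigma^{-1}$ with $\sigma\in C$, $\pi\in C'$ (Claim~\ref{claim:product}, which imposes the awkward constraint $\tau^{-1}(B)\subseteq[t]^c$), and then builds such a $\tau$ with a cycle of length $\geq n-t+1$ by a delicate case analysis stringing together ``iteration chains'' (Claims~\ref{claim:mapsoutside} and \ref{claim:ifandonlyif}, including the ``bad chain'' bookkeeping). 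You decouple the two cosets differently: you first choose $\sigma$ in one coset, using only the one-point freedom $\sigma(d)\notin R_i\cup R_j$, so that the translated coset $\sigma^{-1}C'$ is a single $t$-coset whose domain-set differs from its range-set, and then invoke a clean, self-contained lemma --- a $t$-coset $T_{a\mapsto c}$ with $\{a_k\}\neq\{c_k\}$ contains a $t$-derangement, because the cycles of the partial injection $a_k\mapsto c_k$ occupy at most $t-1$ points and everything else can be swept into one cycle of length $\geq n-t+1$. The underlying mechanism (a quotient permutation with one cycle longer than $n-t$) is the same, but your organization eliminates the intertwined constraints that make the paper's construction laborious; the only cost is that you need $n>2t$ rather than the $n\geq 2t$ of Proposition~\ref{prop:equality}, which is irrelevant since the theorem assumes $n$ large. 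Your derangement lemma and the choice of $\sigma$ both check out, as does the two-stage argument forcing first all domain-sets and then all range-sets of the constituent $t$-cosets to coincide.
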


By Theorem \ref{thm:upperbound}, if \(n\) is sufficiently large depending on \(t\), and \(\mathcal{A} \subset S_n\) is \(t\)-set-intersecting with \(|\mathcal{A}| = t!(n-t)!\), then the characteristic function of \(\mathcal{A}\) is in \(U_t\), the linear span of the characteristic functions of the \(T_{x \mapsto y}\)'s. Let \(V_t\) denote linear span of the characteristic functions of the {\em \(t\)-cosets}. Clearly, \(U_t \leq V_t\). Recall the following theorem from \cite{jointpaper}.
\begin{theorem}[Benabbas, Friedgut, Pilpel]
\label{thm:bfp}
If the characteristic function of \(\mathcal{A} \subset S_n\) is a linear combination of the characteristic functions of the \(t\)-cosets of \(S_n\), then \(\mathcal{A}\) is a disjoint union of \(t\)-cosets of \(S_n\).
\end{theorem}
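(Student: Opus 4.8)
The plan is to combine a ``peeling'' reduction with an induction on $n$, the whole argument being carried out for $n$ large in terms of $t$ (which is all that is needed, and is in fact necessary: e.g.\ $\{\mathrm{id},(1234)\}\subset S_4$ has characteristic function in $V_2 := \Span\{1_{T_{a\mapsto b}}\}$ but is not a union of $2$-cosets). First I would record the facts about $V_t$ used throughout. Writing $x_{ij}(\sigma)=1[\sigma(i)=j]$ for the entries of the permutation matrix of $\sigma$, one has $1_{T_{a\mapsto b}}=\prod_{k=1}^{t}x_{a_k b_k}$, so $V_t$ is precisely the space of functions on $S_n$ of degree $\le t$ in the $x_{ij}$; equivalently $V_t=\bigoplus_{\lambda:\ \lambda_1\ge n-t}U_\lambda$ (the functions of $(\sigma(a_1),\dots,\sigma(a_t))$ form a copy of $\mathrm{Ind}_{S_{n-t}}^{S_n}\mathbf{1}$, whose constituents are exactly the $S^\lambda$ with $\lambda_1\ge n-t$, and $V_t$ is closed under right translation, so Lemma~\ref{lemma:righttranslation} forces it to contain whole isotypic ideals). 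Crucially, $V_t$ is invariant under left and right translation, and the restriction of any $f\in V_t$ to a point-stabilizer $\mathrm{Stab}(i)\cong S_{n-1}$ — or to any translate $T_{i\mapsto j}$ of it — lies in the corresponding space $V_t$ for $S_{n-1}$.

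The second step is to reduce the theorem to the single claim $(\star)$: \emph{if $\mathcal{A}\ne\emptyset$ and $1_{\mathcal{A}}\in V_t$, then $\mathcal{A}$ contains a $t$-coset.} Granting $(\star)$, choose a $t$-coset $T_1\subseteq\mathcal{A}$; then $1_{\mathcal{A}\setminus T_1}=1_{\mathcal{A}}-1_{T_1}\in V_t$ and is still $\{0,1\}$-valued (because $T_1\subseteq\mathcal{A}$), so if $\mathcal{A}\setminus T_1\ne\emptyset$ it contains a $t$-coset $T_2$, disjoint from $T_1$ by construction; iterating (the cardinality drops by $(n-t)!$ at each step) exhibits $\mathcal{A}$ as a \emph{disjoint} union of $t$-cosets.

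The third, and essential, step is $(\star)$, which I would prove by induction on $n$ (the finitely many smallest admissible values of $n$ being checked directly). Translating, assume $\mathrm{id}\in\mathcal{A}$; a $t$-coset through $\mathrm{id}$ is exactly a set $\mathrm{Fix}(S)$ with $|S|=t$, so the goal is to produce a $t$-set $S$ with $\mathrm{Fix}(S)\subseteq\mathcal{A}$. For each point $i$, restricting $1_{\mathcal{A}}$ to $\mathrm{Stab}(i)\cong S_{n-1}$ gives a Boolean element of $V_t(S_{n-1})$ containing the identity, so by the inductive hypothesis $\mathcal{A}\cap\mathrm{Stab}(i)$ is a disjoint union of $t$-cosets of $\mathrm{Stab}(i)$, one of which contains $\mathrm{id}$; this yields a $t$-set $S_i\subseteq[n]\setminus\{i\}$ with $\mathrm{Fix}(\{i\}\cup S_i)\subseteq\mathcal{A}$ — i.e.\ only a $(t+1)$-set's worth of permutations. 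The main obstacle is to collapse one coordinate, using the global hypothesis that $1_{\mathcal{A}}$ has degree $\le t$ (not merely $\le t+1$), to replace such a $(t+1)$-set by a $t$-set. My plan for this is a contradiction argument: if $\mathrm{Fix}(F)\subseteq\mathcal{A}$ for an inclusion-minimal $(t+1)$-set $F$ but for no $t$-subset of $F$, then one computes the projection of $1_{\mathcal{A}}$ onto the isotypic ideals $U_\lambda$ with $\lambda_1=n-t-1$ from the (inductively understood) restrictions of $1_{\mathcal{A}}$ to the groups $\mathrm{Fix}(F\setminus\{x\})\cong S_{n-t}$ for $x\in F$, and shows this projection is nonzero, contradicting $1_{\mathcal{A}}\in V_t=\bigoplus_{\lambda_1\ge n-t}U_\lambda$. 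An alternative that avoids the peeling step is to analyze the restrictions of $1_{\mathcal{A}}$ to \emph{all} $1$-cosets $T_{i\mapsto j}$ simultaneously — each $\mathcal{A}\cap T_{i\mapsto j}$ being, by induction, a disjoint union of $t$-cosets lying inside $T_{i\mapsto j}$ — and to check that the only gluings consistent with $1_{\mathcal{A}}$ having degree $\le t$ make $\mathcal{A}$ a disjoint union of $t$-cosets; but I expect that bookkeeping to be at least as delicate, so I would pursue the peeling route.
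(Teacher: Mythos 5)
The paper does not prove this theorem: it imports it verbatim from \cite{jointpaper}, so there is no in-paper argument to compare against, and your proposal has to stand on its own. Your preparatory material is fine and matches the standard framework: the identification of $V_t$ (the span of the $1_{T_{a\mapsto b}}$'s) with the degree-$\le t$ functions in the $x_{ij}$ and with $\bigoplus_{\lambda_1\ge n-t}U_\lambda$, the closure of $V_t$ under restriction to $1$-cosets, and the peeling reduction of the theorem to the claim $(\star)$ that a nonempty Boolean member of $V_t$ contains a $t$-coset are all correct (and your observation that $\{\mathrm{id},(1234)\}\subset S_4$ lies in $V_2$ is a genuine and worthwhile remark --- the statement as quoted does need a lower bound on $n$, which is implicit in how it is applied here).

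The gap is that step 3 --- the entire content of the theorem --- is not a proof but a declaration of intent. The induction over point-stabilizers only delivers $(t{+}1)$-cosets $\mathrm{Fix}(\{i\}\cup S_i)\subseteq\mathcal{A}$ through the identity, and the ``collapse'' from $t+1$ to $t$ is exactly where a Boolean degree-$\le t$ function differs from a Boolean degree-$\le t{+}1$ function; you say ``one computes the projection of $1_{\mathcal{A}}$ onto the $U_\lambda$ with $\lambda_1=n-t-1$ \ldots and shows this projection is nonzero'' without exhibiting the computation, and the plan as stated is not obviously executable: an isotypic projection is a global linear functional of $1_{\mathcal{A}}$ on all of $S_n$, and you give no mechanism by which it is determined, or even bounded away from zero, by the restrictions of $1_{\mathcal{A}}$ to the $t+1$ small subgroups $\mathrm{Fix}(F\setminus\{x\})$; other parts of $\mathcal{A}$ could a priori cancel whatever contribution the minimal bad $(t{+}1)$-set $F$ makes, so some localization or derivative-type identity must be proved, and none is. A second structural problem is the base case of your induction on $n$: your own $S_4$ example shows $(\star)$ fails for $n=2t$, so the induction must start at some threshold $n_0(t)$, and ``the finitely many smallest admissible values of $n$ being checked directly'' is not a finite verification for general $t$ --- establishing the statement at $n=n_0(t)$ is essentially the whole theorem, and nothing in the proposal supplies it. The proof in \cite{jointpaper} has to work considerably harder at precisely these two points.
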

It follows that a family \(\mathcal{A} \subset S_n\) satisfying the hypotheses of Theorem \ref{thm:equality} must be a disjoint union of \(t\)-cosets of \(S_n\). Hence, it suffices to prove the following
\begin{proposition}
\label{prop:equality}
For any \(n \geq 2t\), if \(\mathcal{A} \subset S_n\) is \(t\)-set-intersecting with \(|\mathcal{A}|=t!(n-t)!\), and is a disjoint union of \(t\)-cosets, then it is a coset of the stabilizer of a \(t\)-set.
\end{proposition}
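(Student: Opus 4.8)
The plan is to exploit the translation-invariance of the hypotheses, together with a description of which products of $t$-cosets contain $t$-derangements.

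\emph{Reduction.} Both left- and right-translation preserve the hypotheses of the proposition: if $\sigma(x)=\pi(x)$ for some $t$-set $x$ then $(\tau\sigma)(x)=(\tau\pi)(x)$ and $(\sigma\tau)(\tau^{-1}x)=(\pi\tau)(\tau^{-1}x)$, so $\tau\mathcal{A}$ and $\mathcal{A}\tau$ are again $t$-set-intersecting; and since $\tau T_{a\mapsto b}=T_{a\mapsto\tau(b)}$ and $T_{a\mapsto b}\tau=T_{\tau^{-1}(a)\mapsto b}$, translation permutes the $t$-cosets of $S_n$ and carries $t$-SSCs to $t$-SSCs. Hence, after translating, I may assume that one of the $t$-cosets making up $\mathcal{A}$ is $H:=\{\sigma\in S_n:\ \sigma(i)=i\ \forall i\in[t]\}=T_{(1,\ldots,t)\mapsto(1,\ldots,t)}$, the pointwise stabilizer of $[t]$; in particular $\textrm{id}\in\mathcal{A}$.

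\emph{Each $t$-coset of $\mathcal{A}$ sits inside a $t$-SSC.} Since $\textrm{id}\in\mathcal{A}$, every $\sigma\in\mathcal{A}$ satisfies $\sigma(x)=x$ for some $t$-set $x$; hence no $t$-coset of $\mathcal{A}$ contains a $t$-derangement. I would first establish the lemma: for $n\geq 2t$, a $t$-coset $T_{c\mapsto d}$ contains no $t$-derangement if and only if $\{c_1,\ldots,c_t\}=\{d_1,\ldots,d_t\}$. The ``only if'' direction is the substantive one: if the two $t$-sets differ, their intersection has size at most $t-1$, so one can extend the partial matching $c_i\mapsto d_i$ to the permutation that keeps intact the cycles of the matching contained in that intersection (of total length at most $t-1$) and routes every remaining point of $[n]$ into a single cycle of length at least $n-(t-1)>t$; no subcollection of the resulting cycle-lengths can sum to $t$, so it is a $t$-derangement. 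Consequently every $t$-coset $T=T_{c\mapsto d}$ of $\mathcal{A}$ is contained in the $t$-SSC $T_{x_T\mapsto x_T}$ of all permutations fixing $x_T:=\{c_1,\ldots,c_t\}=\{d_1,\ldots,d_t\}$ setwise, and $x_H=[t]$.

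\emph{All these $t$-SSCs coincide.} Fix a $t$-coset $T=T_{c\mapsto d}$ of $\mathcal{A}$ and let $m$ be the permutation of $y:=x_T$ with $d_i=m(c_i)$. For any $h\in H$ and $\sigma\in T$ (both lying in $\mathcal{A}$) the permutation $h^{-1}\sigma$ fixes a $t$-set; as $h$ runs over the group $H$ this says precisely that $HT$ contains no $t$-derangement. The crux is the converse-type statement: if $n\geq 2t$ and $y\neq[t]$ (and, in case $n=2t$, also $y\neq[n]\setminus[t]$), then $HT$ \emph{does} contain a $t$-derangement. One first identifies $HT$: writing $\rho=h\sigma$ with $\sigma=h^{-1}\rho\in T$ and using that $h$ fixes $[t]$ pointwise, one checks that $\rho\in HT$ if and only if, for every $z\in y$, $\rho(z)=m(z)$ whenever $m(z)\in[t]$ and $\rho(z)\notin[t]$ whenever $m(z)\notin[t]$. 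One then builds a $t$-derangement meeting these requirements by the same device as in the previous lemma --- keep the forced short cycles, route everything else into one long cycle --- the extra constraint $\rho(z)\notin[t]$ being easy to accommodate since $[n]\setminus[t]$ has at least $t$ elements while the forced images all lie in $[t]$. This construction succeeds except precisely when $n=2t$ and $y=[n]\setminus[t]$, in which case the constraints would force $\rho([t])=[t]$.

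\emph{Conclusion.} Given this, the absence of $t$-derangements in $HT$ forces, for each $t$-coset $T$ of $\mathcal{A}$, that the $t$-SSC $T_{x_T\mapsto x_T}$ equals $K:=T_{[t]\mapsto[t]}$ (a $t$-set-stabilizer determines its $t$-set, except that for $n=2t$ it also stabilizes the complement); hence every such $T$ lies in $K$, so $\mathcal{A}\subseteq K$, and since $|\mathcal{A}|=t!(n-t)!=|K|$ we get $\mathcal{A}=K$. Undoing the initial translations --- which carry $t$-SSCs to $t$-SSCs --- shows that the original $\mathcal{A}$ is a coset of the stabilizer of a $t$-set, as required. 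The \emph{main obstacle} is the last lemma: pinning down $HT$ exactly, then constructing a $t$-derangement inside it subject to the constraints coming from $H$, while correctly isolating the genuine boundary case $n=2t$.
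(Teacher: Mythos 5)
Your proposal is correct, and it is organized quite differently from the paper's argument, in a way that genuinely simplifies the combinatorics. The paper also translates so that the pointwise stabilizer $C=T_{1\mapsto 1,\ldots,t\mapsto t}$ lies in $\mathcal{A}$, but then attacks an arbitrary $t$-coset $C'=T_{i\mapsto p(i)}$ of $\mathcal{A}$ head-on, proving via two separate claims (split according to whether some $i^*\in I\cap[t]$ has $p(i^*)\notin[t]$, or $i\in[t]\Leftrightarrow p(i)\in[t]$ throughout) that $C'C$ contains a permutation with a cycle of length at least $n-t+1$; the first of these claims requires a rather delicate stringing-together of ``iteration chains'' into a long cycle. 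Your extra preliminary step --- using $\mathrm{id}\in\mathcal{A}$ to show that no $t$-coset of $\mathcal{A}$ contains a $t$-derangement, and hence (by your first lemma, whose ``only if'' direction is sound: the complete cycles of the partial matching lie in $\{c_i\}\cap\{d_i\}$, of size at most $t-1$, so the cycle lengths are a collection summing to at most $t-1$ plus one length $\geq n-t+1>t$) that every $t$-coset of $\mathcal{A}$ sits inside a setwise stabilizer $T_{x\mapsto x}$ --- has no analogue in the paper and collapses the subsequent case analysis to a single construction. Your identification of $HT$ is correct, and so is your isolation of the unique failure case: the cyclic arrangement of blocks succeeds because every forced chain starts in $[t]^{c}$ and ends in $[t]\cap y$, and the isolated points of $[t]^{c}$ not in the domain of the forced map number at least $n-2t+|y\setminus[t]|$, which suffices unless $[t]^{c}\subseteq y$, i.e.\ unless $n=2t$ and $y=[t]^{c}$ --- where $T_{y\mapsto y}=T_{[t]\mapsto[t]}$ anyway. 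The one place your write-up is thinner than it should be is exactly the step you flag: ``easy to accommodate'' hides the need to order the forced chains and free points cyclically so that each constrained point is succeeded by an element of $[t]^{c}$; this does work out (the set of blocks ending at a constrained point is contained in the set of blocks beginning in $[t]^{c}$, with strict containment outside the exceptional case), but it is the counterpart of the paper's most laborious passage and deserves to be written out. In exchange for the extra lemma, your route avoids the paper's two-case split and its ``bad chain'' bookkeeping entirely.
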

\begin{proof}
Note that for any \(\pi,\tau \in S_n\), we may replace \(\mathcal{A}\) by the `double translate' \(\pi \mathcal{A} \tau\); this is a disjoint union of \(t\)-cosets if and only if \(\mathcal{A}\) is, and is a coset of the stabilizer of a \(t\)-set if and only if \(\mathcal{A}\) is.

Suppose \(C=T_{a_1 \mapsto b_1,\ldots, a_t \mapsto b_t} \subset \mathcal{A}\). Without loss of generality, by double translation, we may assume that \(a_l = b_l = l\) for each \(l \in [t]\), i.e. \(C=T_{1 \mapsto 1,\ldots,t \mapsto t} \subset \mathcal{A}\). Our aim is to show that \(\mathcal{A} = \{\sigma \in S_n:\ \sigma([t])=[t]\}\).

Suppose for a contradiction that \(\mathcal{A}\) contains a \(t\)-coset \(C' = T_{i_1 \mapsto p(i_1),\ldots,i_t \mapsto p(i_t)}\) which does not map \([t]\) to itself. We will show that there exist permutations \(\sigma \in C\) and \(\pi \in C'\) such that no \(t\)-set has the same image under \(\sigma\) and \(\pi\).

Let \(I = \{i_1,i_2,\ldots,i_t\}\); we regard \(p:I \to [n]\) as an injection. Let \(J = p(I) = \{p(i_1),p(i_2),\ldots,p(i_t)\}\), let \(I' = I \cap [t]\), let \(I'' = I \setminus [t]\), let \(R = [t] \setminus I\), and let \(B = p(I'')\). We will need the following.

\begin{claim}
\label{claim:product}
Let \(\tau \in S_n\) be any permutation satisfying
\begin{equation}
\label{eq:tauconditions}
\tau(i) = p(i)\ \forall i \in I',\quad \tau^{-1}(B) \subset [t]^{c}.
\end{equation}
Then there exist \(\sigma \in C\) and \(\pi \in C'\) such that \(\pi \sigma^{-1} = \tau\).
\end{claim}
\begin{proof}[Proof of Claim \ref{claim:product}:]
Let \(\tau \in S_n\) be as above. Choose any \(\sigma \in C\) such that \(\sigma(i) = \tau^{-1}(p(i))\) for all \(i \in I''\). (We can do this because \(I''\) and \(\tau^{-1}(B)\) are both subsets of \([t]^{c}\).) Then \(\tau \sigma(i) = p(i)\) for all \(i \in I''\), and \(\tau \sigma(i) = \tau(i) = p(i)\) for all \(i \in I'\). Hence, \(\tau \sigma(i) = p(i)\) for all \(i \in I\), so \(\tau \sigma \in C'\). This proves the claim.
\end{proof}

Since we are assuming that \(C'\) does not fix \([t]\), we must have either \(I \neq [t]\) or \(J \neq [t]\). We split into two cases:\\
\\
\textit{Case (i):} \(I \cap [t] = J \cap [t] = \emptyset\).\\
If \(n=2t\), then \(I=J=[t]^{c}\), so \(C'\) maps \([t]\) to itself, contrary to our assumption, so we may assume that \(n > 2t\). Without loss of generality, by double translation, we may assume that \(C' = T_{t+1 \mapsto t+1,\ldots,2t \mapsto 2t}\). Observe that
\begin{align*}
\sigma & := (2t+1,\ 2t,\ 2t-1,\ldots, t+2,\ t+1) \in C,\\
\pi & := (1,\ 2,\ldots t,\ 2t+1,\ 2t+2,\ldots, n) \in C',
\end{align*}
but
\[\sigma^{-1} \pi = (1\ 2\ 3\ldots n)\]
is an \(n\)-cycle, and therefore fixes no \(t\)-set. Hence, no \(t\)-set has the same image under \(\sigma\) and \(\pi\), contradicting the fact that \(\mathcal{A}\) is \(t\)-set-intersecting.\\
\\
\textit{Case (ii):} \(I \cap [t] \neq \emptyset\) or \(J \cap [t] \neq \emptyset\).\\
We need the following.
\begin{claim}
\label{claim:mapsoutside}
If there exists \(i^* \in I \cap [t]\) such that \(p(i^*) \notin [t]\), then there exists a permutation \(\tau \in S_n\) satisfying (\ref{eq:tauconditions}), and with all the numbers in \([t]^{c} \cup \{i^*\}\) appearing in a single cycle.
\end{claim}
\begin{proof}[Proof of Claim \ref{claim:mapsoutside}:]
For each \(i \in I'\), consider the sequence of iterates
\[i,\ p(i),\ p(p(i)),\ \ldots\]
If the sequence remains within \(I'\), then it forms a cycle, which we call an `iteration cycle'; otherwise, it is part of a maximal chain of iterates which starts in \(I'\) and has all its members in \(I'\) except for the last one; we call this an `iteration chain'. The iteration cycles and chains are pairwise disjoint, and cover \(I'\).

We now construct a suitable permutation \(\tau \in S_n\). Each iteration cycle is a cycle of \(\tau\); the other numbers in \([n]\) are in a single long cycle of \(\tau\), formed from a long chain which we construct as follows.

Let \(S\) be the set of all start-points of iteration chains that end in \([t]^c\). By assumption, there exists \(i^* \in I'\) such that \(p(i^*) \notin [t]\), so at least one iteration chain ends in \([t]^{c}\), and therefore \(S \neq \emptyset\). Let \(\mathcal{F}\) be the set of iteration chains that start in \(B \cap S\) and end in \([t]^{c}\); note that we may have \(\mathcal{F} = \emptyset\).

We first form a special collection \(\mathcal{G}\) of disjoint chains, as follows.
\begin{itemize}
 \item String together the chains in \(\mathcal{F}\), end-to-end, producing a single chain starting in \(B \cap S\) and ending in \([t]^{c}\). Then add all the points of \(B \cap [t]^{c}\) onto the end of this chain, one by one, in any order, forming a `bad' chain starting in \(B \cap S\) and ending in \([t]^{c}\). Include the bad chain in \(\mathcal{G}\) (it may be extended later).
\item For each \(b \in B \cap R\) in turn, choose \(q_b \in [t]^{c}\) not already in a \(\mathcal{G}\)-chain. If \(q_b\) is the endpoint of an iteration chain, include the whole of this iteration chain in \(\mathcal{G}\); otherwise, include the two-point chain \((q_b,b)\) in \(\mathcal{G}\).
\item For each iteration chain \(\mathcal{C}\) starting in \(B \setminus S\) and ending in \([t] \setminus I = R\), choose a different point \(q_{\mathcal{C}} \in [t]^{c}\) which is not already a member of any \(\mathcal{G}\)-chain. If \(q_{\mathcal{C}}\) is the endpoint of an iteration chain \(\mathcal{D}\), include the chain \((\mathcal{D},\mathcal{C})\) in \(\mathcal{G}\); otherwise, include the chain \((q_{\mathcal{C}},\mathcal{C})\) in \(\mathcal{G}\).
\item Observe that we have used exactly \(|B|\) points of \([t]^{c}\) so far, all the elements of \(B\) are in \(\mathcal{G}\)-chains, and every element of \(B\) is preceded in its \(\mathcal{G}\)-chain by an element of \([t]^c\), except for the starting-point of the bad chain. (If the bad chain is empty, then {\em every} element of \(B\) is preceded in its chain by an element of \([t]^c\), and we go to the final step.) Now suppose the bad chain is non-empty. By assumption, \(i^* \in I'\), so \(|I''| < t\), and therefore \(|B| = |I''| < t \leq n-t\), so there is still at least one point of \([t]^{c}\) that has not been used in any \(\mathcal{G}\)-chain. Choose one such point, \(u\) say, and add it to the start of the bad chain. If \(u\) is the endpoint of an iteration chain, then add the whole of this iteration chain to the start of the bad chain. (Note that this iteration chain cannot start in \(B \cap S\), since the endpoints of all such iteration chains have already been used in \(\mathcal{G}\)-chains.) {\em Every} element of \(B\) is now preceded in its \(\mathcal{G}\)-chain by an element of \([t]^c\).
\item Finally, include in \(\mathcal{G}\) all the iteration chains with none of their elements previously used in \(\mathcal{G}\)-chains.
\end{itemize}
Observe that \(\mathcal{G}\) consists of disjoint chains that cover all the iteration chains, and all points of \(B\). We now string together all the \(\mathcal{G}\)-chains and all the remaining isolated points in \([t]^{c}\) to produce a long chain; this forms the long cycle of \(\tau\). This long cycle contains all the numbers in \([t]^{c} \cup \{i^*\}\), and by construction, \(\tau\) satisfies (\ref{eq:tauconditions}), proving the claim.
\end{proof}

Hence, if there exists \(i^* \in I \cap [t]\) such that \(p(i^*) \notin [t]\), then by Claim \ref{claim:product}, there exist \(\sigma \in C\), \(\pi \in C'\) such that \(\pi \sigma^{-1}\) has a cycle of length at least \(n-t+1\), so cannot fix any \(t\)-set, a contradiction.

Similarly, if there exists \(i^* \in I \setminus [t]\) such that \(p(i^*) \in [t]\), then we obtain a contradiction by taking inverses and applying the above argument.

Hence, from now on, we may assume that \(i \in [t] \Leftrightarrow p(i) \in [t]\) for each \(i \in I\). Therefore, \(|I \cap [t]| = |J \cap [t]|\). By assumption, we cannot have \(I \cap [t] = J \cap [t] = \emptyset\), or \(I = J = [t]\), and therefore \(|I \cap [t]| = |J \cap [t]| = s \neq 0,t\).

We now prove the following.
\begin{claim}
\label{claim:ifandonlyif}
If \(i \in [t] \Leftrightarrow p(i) \in [t]\) for each \(i \in I\), then there exists a permutation \(\tau \in S_n\) satisfying (\ref{eq:tauconditions}), and with all the numbers in \((I \cap [t])^{c}\) appearing in a single cycle.
\end{claim}
\begin{proof}[Proof of Claim \ref{claim:ifandonlyif}:]
Define the iteration chains and the iteration cycles as before. This time, since \(i \in [t] \Leftrightarrow p(i) \in [t]\) for each \(i \in I\), all the iteration chains end in \([t] \setminus I = R\). Let each iteration cycle be a cycle of \(\tau\); the other numbers in \([n]\) will be in a single long cycle of \(\tau\), formed from a long chain as follows.

Start at the beginning of any iteration chain. Join the iteration chains together end-to-end, in any order, to produce a single chain, and then add the other points of \([t] \setminus I = R\) onto the end of the chain, in any order. Note that \(B \subsetneq [t]^c\). Choose any point of \([t]^{c} \setminus B\), add it onto the end of the chain, and then add the rest of \([t]^c\) onto the end of the chain, producing a long chain. Note that this long chain covers all of \((I \cap [t])^c\), and each element of \(B\) is preceded in the long chain by an element of \([t]^c\). The long chain forms the long cycle of \(\tau\).

The iteration cycles all lie within \(I \cap [t]\); all the other numbers in \([n]\) are in a single cycle of \(\tau\). By construction, \(\tau\) satisfies (\ref{eq:tauconditions}), proving the claim.
\end{proof}

Hence, if \(i \in [t] \Leftrightarrow p(i) \in [t]\) for each \(i \in I\), then by Claim \ref{claim:product}, there exist \(\sigma \in C\), \(\pi \in C'\) such that \(\pi \sigma^{-1}\) has a cycle of length at least \(n-t+1\), so cannot fix any \(t\)-set, a contradiction.

It follows that any \(t\)-coset within \(\mathcal{A}\) must map \([t]\) to itself, so all the permutations in \(\mathcal{A}\) must map \([t]\) to itself. Since \(|\mathcal{A}|=t!(n-t)!\), it follows that \(\mathcal{A} = \{\sigma \in S_n:\ \sigma([t])=[t]\}\), proving Proposition \ref{prop:equality}.
\end{proof}

\section{Conclusion and open problems}
It is natural to ask for which values of \(n\) and \(t\) Theorem \ref{thm:main} holds. We conjecture that in fact, the upper bound in Theorem \ref{thm:main} holds for all \(n\) and \(t\), and that the unique extremal families are the cosets of stabilizers of \(t\)-sets, unless \(t=2\) and \(n=4\). (Observe that in the case \(t=2\) and \(n=4\), the family \[\{\textrm{id},(12)(34),(13)(24),(14)(23)\}\]
is also extremal, but is not a coset of the stabilizer of a 2-set.)
\begin{conjecture}
\label{conj:main}
For any positive integers \(t \leq n\), if \(\mathcal{A} \subset S_n\) is \(t\)-set-intersecting, then \(|\mathcal{A}| \leq t!(n-t)!\). Equality holds only if \(\mathcal{A}\) is a coset of the stabilizer of a \(t\)-set, unless \(t=2\) and \(n\in\{4,5\}\).
\end{conjecture}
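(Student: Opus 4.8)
\medskip

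The plan is to reduce the conjecture to a bounded range of \(n\) and then attack that range directly. By the complementation bijection \([n]^{(t)} \leftrightarrow [n]^{(n-t)}\), a family is \(t\)-set-intersecting if and only if it is \((n-t)\)-set-intersecting, and the \(t\)-set-stabilizer cosets coincide with the \((n-t)\)-set-stabilizer cosets; so one may assume \(n \ge 2t\), the range \(t \le n < 2t\) following by induction on \(t\) from the regime \(n \ge 2t'\) with the smaller parameter \(t' = n-t\). For \(n \ge 2t\), Theorem~\ref{thm:main} already gives the statement once \(n\) exceeds some threshold \(N(t)\), so the entire content of the conjecture is confined to the finite range \(2t \le n \le N(t)\). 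The first concrete step would be to make every estimate entering the construction of the pseudoadjacency matrix effective — the window \(\eta_{n,t,\beta} \in [0,1]\), the dimension bound of Lemma~\ref{lemma:highdimension}, and the counting bound of Lemma~\ref{lemma:noshortcyclesbound} — so as to replace \(N(t)\) by an explicit and hopefully small function of \(t\), leaving only a short, \(t\)-dependent list of sporadic values of \(n\) to be handled by other means.

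For those sporadic values one cannot simply invoke Hoffman's bound on the genuine adjacency matrix of \(\Gamma_{(t)}\), since (as noted in the introduction) its least eigenvalue yields only \(\Theta((n-1)!)\). Two complementary routes present themselves. The first is to search for a pseudoadjacency matrix for \(\Gamma_{(t)}\) valid for all \(n \ge 2t\); the difficulty is that the linear system defining the weights can genuinely be infeasible for small \(n\) — the prescribed eigenvalues \(\eta_{n,t,\beta}\) may leave \([0,1]\), and the medium Specht modules may be of small dimension — so one would have to work with a more flexible family of generating sets (not only full conjugacy classes of \(t\)-derangements) and solve the feasibility problem case by case. The second route is to look for a genuinely combinatorial upper-bound argument that is uniform in \(n\): a compression or shifting argument in the spirit of the Erd\H{o}s--Ko--Rado proofs for set systems, or a strengthening of Deza and Frankl's partitioning/averaging argument that needs only an \emph{approximately} sharply \(t\)-set-transitive subset rather than an exact one.

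The exceptional behaviour at \(t = 2\), \(n = 4\) must be isolated along the way. One should verify that there the maximum \(4\) is attained exactly by the cosets of \(2\)-set-stabilizers and by the translates of \(V_4 = \{\mathrm{id}, (12)(34), (13)(24), (14)(23)\}\), and that for every other pair \((t,n)\) in the sporadic range the \(t\)-set-stabilizer cosets are the unique extremal families. For \(t = 2\), \(n = 5\) the bound \(12 = 2!\,3!\) already follows from the sharply \(2\)-set-transitive subset exhibited in the introduction via the Katona-type averaging argument, so there it remains only to run the uniqueness analysis; similar small-case work, possibly machine-assisted, would dispose of the remaining values.

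The main obstacle is exactly this gap between the two regimes. The representation-theoretic machinery of this paper is asymptotic by design, and small \(n\) is not merely a bookkeeping nuisance: the pseudoadjacency construction can fail to exist for genuine reasons, so a uniform treatment seems to demand a new idea on the upper-bound side — most plausibly a shifting or compression argument — together with a separate, careful enumeration of the sporadic extremal configurations. I expect the shifting argument to be the crux: the action of \(S_n\) on \([n]^{(t)}\) does not interact with coordinate compression nearly as cleanly as subsets of \([n]\) do, and it is not clear that any natural compression preserves the \(t\)-set-intersecting property.
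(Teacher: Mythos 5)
This statement is not a theorem of the paper but an open conjecture: the paper states it in the concluding section and explicitly says that new techniques will be required to prove it, so there is no proof to compare against. Your proposal, read as a proof attempt, therefore has a genuine and decisive gap: everything after the first paragraph is a research programme rather than an argument. The only step you actually establish is the reduction to the range \(2t \le n \le N(t)\), and that part is sound and worth recording: since \(\sigma(x)=\pi(x)\) if and only if \(\sigma([n]\setminus x)=\pi([n]\setminus x)\), a family is \(t\)-set-intersecting if and only if it is \((n-t)\)-set-intersecting, the bound \(t!(n-t)!\) and the extremal families are invariant under \(t \mapsto n-t\), and the exceptional pair \((t,n)=(2,4)\) is fixed by this symmetry; so the conjecture for all \(t \le n\) is equivalent to the conjecture for \(n \ge 2t\), and Theorem \ref{thm:main} then disposes of \(n \ge N(t)\).

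But the entire difficulty of the conjecture lives in the remaining window \(2t \le n < N(t)\), and there you offer only candidate strategies (effectivizing the constants, more flexible generating sets, shifting/compression, machine checking), each flagged by you as potentially failing for genuine reasons — the weights \(\eta_{n,t,\beta}\) leaving \([0,1]\), medium Specht modules of small dimension, the absence of a compression compatible with the \(t\)-set-intersecting property. Making the constants in Lemma \ref{lemma:highdimension}, Lemma \ref{lemma:noshortcyclesbound} and Claim \ref{claim:equiv} explicit would at best shrink \(N(t)\), not close the gap, and since the sporadic range grows with \(t\), a case-by-case finish is not even in principle a complete proof without a uniform argument covering all \(t\). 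You are candid that the crux (most plausibly a shifting argument) is missing; that candour is appropriate, because what is missing is precisely the content of the conjecture. As a referee's summary of the state of the problem your paragraph is accurate; as a proof it does not get past the reduction step.
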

We believe that new techniques will be required to prove this conjecture.

The situation is different for \(t\)-intersecting families: if \(t \geq 4\) and \(n \leq 2t\), then the \(t\)-intersecting family
\[\{\sigma \in S_n:\ \sigma \textrm{ has at least } t+1 \textrm{ fixed points in } [t+2]\}\]
is larger than a \(t\)-coset. In \cite{jointpaper}, the following conjecture is made.
\begin{conjecture}
A maximum-sized \(t\)-intersecting family in \(S_{n}\) must be a double translate of one of the families
\[M_{i} = \{\sigma \in S_{n}:\ \sigma \textrm{ has at least } t+i \textrm{ fixed points in } [t+2i]\}\ (0 \leq i \leq (n-t)/2),\]
i.e. of the form \(\pi M_i \tau\) where \(\pi,\tau \in S_n\).
\end{conjecture}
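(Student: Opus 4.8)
The plan is to split into two regimes of $n$. For $n$ sufficiently large depending on $t$ the statement is already a theorem: by the Deza--Frankl conjecture, proved in \cite{jointpaper}, a maximum-sized $t$-intersecting family in $S_n$ has size $(n-t)!$ and is a coset of the pointwise stabilizer of a $t$-set, which is precisely a double translate of $M_0$. So the content lies in the bounded range of $n$ for which $M_0$ is not the (unique) maximiser --- for $t \ge 4$ this includes all $n \le 2t$, where $M_1$ is already larger than $M_0$. As a first step I would compute $|M_i|$ explicitly as a function of $n$, $t$ and $i$ (an inclusion--exclusion over the fixed coordinates inside $[t+2i]$), determine for each $n$ the set of size-maximising indices, and restate the conjecture in the sharp form: every maximum-sized $t$-intersecting family is a double translate of $M_{i^\ast}$ for some size-maximising index $i^\ast = i^\ast(n,t)$.

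For the upper bound I would try to adapt the weighted-Hoffman / pseudoadjacency machinery of this paper --- equivalently, the Delsarte linear-programming bound in the conjugacy-class association scheme of $S_n$. Let $\Gamma_t$ be the graph on $S_n$ in which $\sigma$ and $\pi$ are joined whenever $|\{j : \sigma(j) = \pi(j)\}| \le t-1$; since a fixed point of $\sigma^{-1}\pi$ is exactly an agreement of $\sigma$ and $\pi$, this is the normal Cayley graph generated by the set $X_{\le t-1}$ of permutations with at most $t-1$ fixed points, and a $t$-intersecting family is exactly an independent set in $\Gamma_t$. As in Section~3, for conjugation-invariant $w$ supported on $X_{\le t-1}$ the matrix $A^{(w)}$ is a pseudoadjacency matrix for $\Gamma_t$, its $U_\alpha$-eigenvalue is $\langle w, \chi_\alpha\rangle/f^\alpha$, and Theorem~\ref{thm:pseudo} bounds independence numbers by $-\lambda_{\min}/(\lambda_{\const}-\lambda_{\min})$. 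The goal is to construct, separately for each ``phase'' $\{n : i^\ast(n,t) = i\}$, a weighting $w$ --- built from conjugacy classes of permutations with few fixed points, in the spirit of Lemma~\ref{lemma:noshortcyclesbound} and the construction that follows it --- whose Hoffman ratio equals $|M_i|/n!$ and whose $\lambda_{\min}$-eigenspace is exactly the sum of the isotypical ideals $U_\alpha$ over those partitions $\alpha$ occurring in the permutation module on ordered $j$-tuples for $j \le t+2i$ --- the ``critical subspace'' relevant to $M_i$, whose invariant functions span the double translates of $1_{M_i}$. Everything then reduces, as here, to estimating the character sums $\langle w, \chi_\alpha\rangle$; the genuinely new feature is that the target ratio is piecewise in $n$, so no single closed form for $w$ works across all ranges and the construction has to be redone phase by phase.

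For the equality case I would follow the template of Section~4. If equality holds in weighted-Hoffman then $1_\mathcal{A} - (|\mathcal{A}|/n!)\mathbf{1} \in \Ker(A^{(w)} - \lambda_{\min}I)$, so $1_\mathcal{A}$ lies in a controlled subspace $U$; I would first prove an analog of Theorem~\ref{thm:cosets-span} identifying $U$ with the span of the characteristic functions of the relevant cosets (those forcing $\ge t+i$ fixed points inside a $(t+2i)$-set), then invoke the Benabbas--Friedgut--Pilpel structure theorem of \cite{jointpaper} to deduce that $\mathcal{A}$ is a union of such cosets, and finally run a combinatorial argument in the spirit of Proposition~\ref{prop:equality}: assuming $\mathcal{A}$ is not a double translate of $M_{i^\ast}$, exhibit $\sigma,\pi \in \mathcal{A}$ whose agreement set has size $< t$, typically by realising a suitable long-cycle permutation as $\pi\sigma^{-1}$, contradicting $t$-intersection.

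The hard part will be twofold. First, there is no shifting or compression available on $S_n$, and even the set-theoretic prototype --- Frankl's conjecture, settled by Ahlswede and Khachatrian --- required a delicate ``pushing--pulling'' generating-set argument; transplanting that into the symmetric group, where the $M_i$ are not cosets of subgroups, is where the real combinatorial difficulty lies, and I would expect the coset-span and structure steps above to need substantial new input rather than a direct imitation of Theorem~\ref{thm:cosets-span}. Second, the spectral method in Section~3 relies crucially on $n$ being large compared with $t$ (there the gap between $\lambda_{\min}$ and the next eigenvalue is $\Theta(n^{-(t+1)})$, controlled by Lemma~\ref{lemma:highdimension}); near the phase boundary $n \approx 2t$, and for small $n$ generally, the relevant eigenvalue gaps need not be favourable, so obtaining an exact --- rather than approximate --- characterisation there is likely to require either ad hoc case analysis or a considerably finer study of the character sums. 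This last step, handling the small-$n$ boundary, is where I expect to get stuck.
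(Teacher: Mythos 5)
The statement you were asked to prove is not a theorem of this paper: it is an open conjecture (attributed to \cite{jointpaper}), and the paper explicitly records that even its weaker consequence --- that for $n>2t$ the maximum-sized $t$-intersecting families are the $t$-cosets --- remains open. There is therefore no proof in the paper to compare against, and your submission is, by your own account, a research plan rather than a proof. Your reduction of the problem is sound as far as it goes: for $n$ sufficiently large depending on $t$, the Deza--Frankl theorem of \cite{jointpaper} gives exactly the conclusion with $i=0$, so the entire content of the conjecture is concentrated in the bounded range $n=O(t)$ where some $M_i$ with $i\geq 1$ overtakes $M_0$ (e.g.\ $t\geq 4$, $n\leq 2t$). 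But that is precisely the regime your proposed method cannot reach, and you say so yourself.

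The concrete gap is this: the weighted-Hoffman/Delsarte machinery of Section~3 delivers a bound of the form $-\lambda_{\min}/(\lambda_{\const}-\lambda_{\min})$, and making it tight at $|M_i|/n!$ requires forcing $\lambda_{\alpha}^{(w)}=\lambda_{\min}$ simultaneously for every $\alpha\neq(n)$ with $\alpha_1\geq n-t-i$ onto which $1_{M_i}$ projects nontrivially, while keeping all other eigenvalues strictly larger --- and all of this with $w$ supported on permutations with at most $t-1$ fixed points, in a regime where $n$ is comparable to $t$ and the dimension and character estimates (Lemmas~\ref{lemma:dimension-for-long} and \ref{lemma:highdimension}, Claim~\ref{claim:equiv}) give nothing. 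In the analogous Ahlswede--Khachatrian problem for set systems, the pure linear-programming bound is known \emph{not} to be tight for the intermediate Frankl families, and the proof required compression arguments with no current analogue in $S_n$; there is no reason to expect the LP bound in the conjugacy-class scheme of $S_n$ to fare better at the phase boundaries. So the step you flag as ``where I expect to get stuck'' is not a technical loose end but the whole problem: no known technique closes it, which is why the statement is a conjecture.
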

This would imply the following.
\begin{conjecture}
For any \(t \in \mathbb{N}\), if \(n > 2t\), the maximum-sized \(t\)-intersecting families in \(S_n\) are the \(t\)-cosets.
\end{conjecture}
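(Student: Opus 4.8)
The plan is to use a single $t$-coset inside $\mathcal{A}$ to force rigidity on the whole family. Since replacing $\mathcal{A}$ by a double translate $g\mathcal{A}h$ (with $g,h \in S_n$) preserves both the hypotheses and the conclusion, fix a $t$-coset $C \subseteq \mathcal{A}$ and translate so that $C = T_{1 \mapsto 1, \ldots, t \mapsto t}$, the set of permutations fixing each of $1, \ldots, t$. Write $H = \{\sigma \in S_n : \sigma([t]) = [t]\}$; then $|H| = t!(n-t)! = |\mathcal{A}|$, so since $\mathcal{A}$ is a disjoint union of $t$-cosets it suffices to show that every $t$-coset contained in $\mathcal{A}$ is contained in $H$: then $\mathcal{A} \subseteq H$, and equality of cardinalities gives $\mathcal{A} = H$, a coset of the stabilizer of the $t$-set $[t]$.

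So, for a contradiction, suppose $\mathcal{A}$ contains a $t$-coset $C' = T_{i_1 \mapsto p(i_1), \ldots, i_t \mapsto p(i_t)}$ which does not map $[t]$ into itself; put $I = \{i_1, \ldots, i_t\}$, view $p : I \to [n]$ as an injection, and set $B = p(I \setminus [t])$. The heart of the argument will be a ``product lemma'': if $\tau \in S_n$ satisfies $\tau(i) = p(i)$ for all $i \in I \cap [t]$ and $\tau^{-1}(B) \subseteq [t]^{c}$, then there are $\sigma \in C$ and $\pi \in C'$ with $\pi\sigma^{-1} = \tau$ --- indeed, choose $\sigma \in C$ with $\sigma(i) = \tau^{-1}(p(i))$ for $i \in I \setminus [t]$ (possible since $I \setminus [t]$ and $\tau^{-1}(B)$ both lie in $[t]^{c}$) and check that $\tau\sigma \in C'$. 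Consequently, if such a $\tau$ can be chosen to be a $t$-derangement, we get a contradiction: $\sigma(x) = \pi(x)$ for some $t$-set $x$ would force $\tau = \pi\sigma^{-1}$ to fix the $t$-set $\sigma(x)$. To make $\tau$ a $t$-derangement it is enough to give it one cycle of length $\geq n - t + 1$; since $n \geq 2t$, that cycle has more than $t$ points, so it cannot lie inside any $t$-set, and since a $\tau$-invariant $t$-set must be a union of cycles of $\tau$, such a set would have to be contained in the $\leq t-1$ points outside the long cycle --- impossible.

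It therefore remains --- and this is the hard part --- to construct, in every configuration, a permutation $\tau$ obeying the two constraints above and possessing such a long cycle. The plan is to split into cases according to how $p$ moves $I$ across $[t]$: (i) if $I \cap [t] = p(I) \cap [t] = \emptyset$ (which forces $n > 2t$), a direct choice of $\sigma \in C$ and $\pi \in C'$ with $\sigma^{-1}\pi$ an $n$-cycle suffices; (ii) if some $i^{*} \in I \cap [t]$ has $p(i^{*}) \notin [t]$ (the mirror case $i^{*} \in I \setminus [t]$, $p(i^{*}) \in [t]$ being reduced to this by passing to inverses); and (iii) if $i \in [t] \Leftrightarrow p(i) \in [t]$ for all $i \in I$, in which case $0 < |I \cap [t]| < t$. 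In cases (ii) and (iii) one first decomposes $I \cap [t]$ according to the iterates $i, p(i), p(p(i)), \ldots$: points whose forward orbit stays inside $I \cap [t]$ form ``iteration cycles'', made into cycles of $\tau$, while the rest form ``iteration chains'' that eventually leave $I \cap [t]$, and all remaining points of $[n]$ are then strung into a single long cycle of $\tau$. The delicate bookkeeping is to thread this long cycle so that each element of $B$ is immediately preceded by an element of $[t]^{c}$ (exactly encoding $\tau^{-1}(B) \subseteq [t]^{c}$), so that the forced values $\tau|_{I \cap [t]} = p|_{I \cap [t]}$ are respected, and so that the long cycle genuinely contains either $i^{*}$ (case (ii), giving a cycle through $[t]^{c} \cup \{i^{*}\}$, of length $n-t+1$) or all of $(I \cap [t])^{c}$ (case (iii), of length $\geq n-t+1$); this requires carefully ordering the various iteration chains (those starting or ending in $B$, those ending in $R = [t] \setminus I$), possibly inserting an auxiliary ``bad'' chain built from $B \cap [t]^{c}$, and absorbing the leftover points of $[t]^{c}$, all while keeping the pieces disjoint. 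I expect this combinatorial construction to be the main obstacle; once it is in hand the contradiction is immediate, so every $t$-coset in $\mathcal{A}$ maps $[t]$ to itself, whence $\mathcal{A} = H$.
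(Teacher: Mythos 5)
There is a fundamental mismatch here: the statement you are asked to prove is an \emph{open conjecture} about $t$-\emph{intersecting} families (families in which any two permutations agree pointwise on at least $t$ elements of $[n]$), whereas your entire argument is about $t$-\emph{set}-intersecting families (families in which any two permutations have the same image on some $t$-set). These are different notions with different extremal families: the conjectured extremal $t$-intersecting families are the $t$-cosets $T_{a \mapsto b}$, of size $(n-t)!$, while the extremal $t$-set-intersecting families are the set-stabilizer cosets $T_{x \mapsto y}$, of size $t!(n-t)!$. Your proposal sets $|\mathcal{A}| = t!(n-t)!$ and aims to show $\mathcal{A} = \{\sigma : \sigma([t]) = [t]\}$, and the contradiction you derive is that $\pi\sigma^{-1}$ fixes no $t$-set --- all of which is the statement and proof of Proposition \ref{prop:equality} of the paper (the uniqueness step for the \emph{set}-intersecting problem), not the conjecture at hand. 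The paper explicitly records the $t$-intersecting conjecture as open ("Even this conjecture remains open"), so there is no proof of it in the paper, and your construction does not supply one.

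Even setting aside the wrong target, there is a second gap that would persist under the charitable reading. You write "since $\mathcal{A}$ is a disjoint union of $t$-cosets it suffices to show...", but that hypothesis is not part of the statement. In the paper it is obtained from the spectral argument: Theorem \ref{thm:upperbound} places $1_{\mathcal{A}}$ in $U_t$, and the Benabbas--Friedgut--Pilpel theorem then forces $\mathcal{A}$ to be a disjoint union of $t$-cosets --- but all of this is available only for $n$ sufficiently large depending on $t$, because the pseudoadjacency matrix construction requires $n$ large for the medium- and fat-partition eigenvalue estimates to kick in. The conjecture asks for the full range $n > 2t$, where no such structural reduction is known; this is precisely why it is open. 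So your argument, which correctly reproduces the combinatorial endgame (Claims \ref{claim:product}, \ref{claim:mapsoutside}, \ref{claim:ifandonlyif}), begins from a starting point that cannot be reached for the statement as given.
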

Even this conjecture remains open.

\subsection*{Some related open problems}
Of course, a family \(\mathcal{A} \subset S_n\) is 2-set-intersecting if and only if for any two permutations \(\sigma,\pi \in \mathcal{A}\), there exists \(\{i,j\} \in [n]^{(2)}\) such that either \(\sigma(i)=\pi(i)\) and \(\sigma(j)=\pi(j)\), or \(\sigma(i)=\pi(j)\) and \(\sigma(j)=\pi(i)\). We have proved that for \(n\) sufficiently large, the largest 2-set-intersecting families in \(S_n\) are cosets of stabilizers of 2-sets.

If we stipulate that the first of the two possibilities above occurs, then we are demanding that the family be 2-intersecting; it was proved in \cite{jointpaper} that if \(n\) is sufficiently large, then the largest 2-intersecting families in \(S_n\) are cosets of stabilizers of 2 points. In both problems, the extremal families consist of all permutations acting in the same way on a small set. Indeed, in many intersection problems, the extremal families consist of all objects containing a fixed copy of the relevant structure; we say that such families are of {\em kernel-type}.

K\"orner asks what happens if instead we stipulate that the second possibility occurs. We say that a family \(\mathcal{A} \subset S_n\) is {\em reversing} if for any two permutations \(\sigma,\pi\) in \(\mathcal{A}\), there exist \(\{i,j\} \in [n]^{(2)}\) such that \(\sigma(i)=\pi(j)\) and \(\sigma(j)=\pi(i)\). K\"orner \cite{korner} asks for a determination of the maximum possible size of a reversing family in \(S_n\). As one might expect, the `reversing' condition changes the answer substantially. K\"orner makes the following.
\begin{conjecture}
\label{conj:exponential}
There exists an absolute constant \(C\) such that for any \(n \in \mathbb{N}\), a reversing family in \(S_n\) has size at most \(C^n\).
\end{conjecture}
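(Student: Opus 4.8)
We first recast the problem spectrally. For distinct $\sigma,\pi\in S_n$, the condition that some $2$-set $\{i,j\}$ satisfies $\sigma(i)=\pi(j)$ and $\sigma(j)=\pi(i)$ is exactly the condition that $\pi^{-1}\sigma$ has $\{i,j\}$ as a $2$-cycle. Hence, letting $\mathcal{T}_n$ denote the set of non-identity permutations of $[n]$ whose cycle decomposition contains no $2$-cycle (a union of conjugacy classes — those indexed by partitions of $n$ with no part equal to $2$, other than $(1^n)$), a family $\mathcal{A}\subseteq S_n$ is reversing if and only if it is an independent set in $\Cay(S_n,\mathcal{T}_n)$. Since $\mathcal{T}_n$ is inverse-closed and conjugation-invariant, this is a normal Cayley graph, so by Theorem~\ref{thm:normalcayley} its eigenvalues are $\lambda_\alpha=(f^\alpha)^{-1}\langle\mathbf{1}_{\mathcal{T}_n},\chi_\alpha\rangle$ for $\alpha\vdash n$. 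The obstruction to the methods of this paper is that $|\mathcal{T}_n|/n!\to e^{-1/2}>0$: the forbidden differences have constant density, so the least eigenvalue of $\Cay(S_n,\mathcal{T}_n)$ has order $n!$, and Hoffman's bound (Theorem~\ref{thm:hoffman}), or the weighted version of Theorem~\ref{thm:pseudo} with any pseudoadjacency matrix $A^{(w)}$, cannot by itself give a bound below $\Theta(n!)$. Indeed $A^{(w)}$ has zero diagonal, since $\mathrm{id}\notin\mathcal{T}_n$, so $\sum_\alpha (f^\alpha)^2\lambda^{(w)}_\alpha=\Trace(A^{(w)})=0$; this identity, together with the enormous multiplicities $(f^\alpha)^2$, prevents $|\lambda^{(w)}_{\min}|/\lambda^{(w)}_{\const}$ from being pushed down to $C^n/n!$. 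So the plan is to argue combinatorially, with the representation theory playing at most a supporting role.

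The plan runs as follows. Reversing is preserved under $\mathcal{A}\mapsto\pi\mathcal{A}\tau$, since $(\pi\sigma_1\tau)^{-1}(\pi\sigma_2\tau)=\tau^{-1}(\sigma_1^{-1}\sigma_2)\tau$ has the same cycle-type as $\sigma_1^{-1}\sigma_2$; so we may normalise $\mathrm{id}\in\mathcal{A}$, after which every $\sigma\in\mathcal{A}$ contains a transposition. The heart of the matter is then to prove a \emph{container}-type statement: there is a constant $C_0$ and a collection of at most $C_0^{\,n}$ bipartite graphs $H$ on vertex-classes $[n]\sqcup[n]$, each with at most $C_0 n$ edges, such that every reversing family containing $\mathrm{id}$ is contained, as a set of perfect matchings, in one of these $H$'s. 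Granting this, we are done: the number of permutations $\sigma$ with $\{(i,\sigma(i)):i\in[n]\}\subseteq H$ is, by the Bregman--Minc permanent inequality, at most $\prod_i(d_i!)^{1/d_i}\le\prod_i d_i\le(e(H)/n)^n\le C_0^{\,n}$ (where $d_i$ is the degree of the $i$-th left vertex, and we used $r!\le r^r$ and the AM--GM inequality), so $|\mathcal{A}|\le C_0^{\,2n}$, which is the conjecture with $C=C_0^{\,2}$. That the container statement has the right shape is witnessed by the pair-swap family — all products of disjoint transpositions on a fixed perfect matching of $[n]$, which is reversing and has size $2^{n/2}$ — for which the relevant $H$ has exactly $2n$ edges.

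To prove the container statement I would try to turn the reversing property into a rigidity statement: once $\mathrm{id}$ and a few members of $\mathcal{A}$ are fixed, the admissible further members are forced onto a linear-sized set of (position, value) pairs. The starting point is that local patterns are incompatible — for instance $(1\,2)$ and $(2\,3)$ do not reverse, since $(2\,3)(1\,2)=(1\,2\,3)$ has no $2$-cycle, and more generally a permutation that is a single long cycle together with a few short cycles of length $\ne 2$ reverses with very little. One would hope to bootstrap such incompatibilities, perhaps via an induction on $n$ in which $\mathcal{A}$ is split into boundedly many classes according to a well-chosen fixed point, each class (after deleting one or two coordinates) being again reversing on a smaller ground set; the naive version of this loses a factor of $n$ per coordinate and only recovers $|\mathcal{A}|\le n!$, so the substance is to show that, up to a $C^n$-sized exceptional part, the image of a suitably chosen point takes only boundedly many values over $\mathcal{A}$.

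\textbf{Main obstacle.} This last point — proving that a reversing family is essentially supported on a linear-sized set of (position, value) pairs — is exactly where I expect the real difficulty to lie, and is presumably why new techniques will be needed. Because the reversing condition is so weak (the allowed differences make up a constant fraction of $S_n$), none of the reductions available for intersecting or $t$-set-intersecting families apply, and every naive partition of $\mathcal{A}$ loses a polynomial factor per coordinate removed, which compounds over $n$ coordinates to far more than $C^n$. I would expect a successful proof to combine a stability/rigidity analysis of small reversing families with a compression or container argument, and not to rely on eigenvalue estimates at all.
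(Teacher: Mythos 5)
This statement is an open conjecture of K\"orner, not a result of the paper: the paper offers no proof, and explicitly records that the best known upper bound, due to F\"uredi, Kantor, Monti and Sinaimeri, is $n!/(1.686\ldots+o(1))^n$ --- enormously larger than $C^n$ --- and that the conjecture ``remains open''. So there is no argument in the paper to compare yours against, and the question is simply whether your proposal constitutes a proof. It does not. Your preliminary reductions are sound: the translation of the reversing condition into independence in the normal Cayley graph generated by the fixed-point-free-of-$2$-cycles permutations is correct (if $\sigma(i)=\pi(j)$ and $\sigma(j)=\pi(i)$ then $\pi^{-1}\sigma$ transposes $i$ and $j$), the normalisation $\mathrm{id}\in\mathcal{A}$ is legitimate, and the deduction of the conjecture from your container statement via Bregman--Minc is fine. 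But the container statement --- that every reversing family containing the identity is supported, as a set of perfect matchings, on one of at most $C_0^n$ bipartite graphs each with $O(n)$ edges --- is the entire content of the conjecture, and you assert it rather than prove it. Indeed it is a strictly stronger structural claim than the conjecture itself, and nothing in your sketch (the observation that $(1\,2)$ and $(2\,3)$ do not reverse, the hoped-for bootstrapping, the induction that you concede loses a polynomial factor per step) comes close to establishing it. You acknowledge this yourself under ``Main obstacle'', which is honest, but it means the proposal is a research plan, not a proof.

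One smaller point: your argument that the eigenvalue method is doomed is not airtight as stated. The trace identity $\sum_\alpha (f^\alpha)^2\lambda^{(w)}_\alpha=0$ gives $|\lambda^{(w)}_{\min}|\geq\lambda^{(w)}_{\const}/n!$, which is compatible with a ratio bound of the form $C^n/n!$, so the identity alone does not rule out an exponentially small bound from Theorem~\ref{thm:pseudo}. The heuristic that a generating set of positive density should not admit such a bound is reasonable, but it is a heuristic. This does not affect the main verdict: the conjecture remains unproven by your proposal.
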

As observed by K\"orner, we can easily produce a reversing family in \(S_n\) of size \(2^{\lfloor n/2 \rfloor}\): let \(m = \lfloor n/2 \rfloor\), and take all permutations fixing each of the 2-sets \(\{1,2\},\{3,4\},\ldots,\{2m-1,2m\}\). On the other hand, it is easy to show that a reversing family in \(S_n\) has size at most \(n! / 3^{\lfloor n/3 \rfloor}\). Indeed, let \(l = \lfloor n/3\rfloor\), and consider the following subgroup \(H\) of \(S_n\):
\[H = \langle (1,\ 2,\ 3)\rangle \times \langle (4,\ 5,\ 6)\rangle \times \ldots \times \langle(3l-2,\ 3l-1,\ 3l)\rangle .\]
Observe that \(H\) is {\em pairwise reverse-free}, meaning that for any two permutations \(\sigma,\pi\in H\), there exists no \(\{i,j\} \in [n]^{(2)}\) such that \(\sigma(i)=\pi(j)\) and \(\sigma(j)=\pi(i)\). Clearly, the same is true of any left coset of \(H\). Hence, if \(\mathcal{A} \subset S_n\) is reversing, then \(\mathcal{A}\) contains at most one permutation from each left coset of \(H\). Since \(|H| = 3^{l} = 3^{\lfloor n/3 \rfloor}\), it follows that \(|\mathcal{A}| \leq n! / 3^{\lfloor n/3 \rfloor}\), as claimed.

We write \(F(n)\) for the maximum possible size of a reversing family of permutations in \(S_n\). The best known bounds on \(F(n)\) are due to F\"uredi, Kantor, Monti and Sinaimeri \cite{furedi}:
\[(1.515\ldots-o(1))^n \leq F(n) \leq \frac{n!}{(1.686\ldots+o(1))^n}.\]

These bounds come from `product-type' constructions similar to those above. Interestingly, there is no natural candidate for an extremal family. Conjecture \ref{conj:exponential} remains open.

The above problems all concern permutations on an `unstructured' ground set. Imposing a structure on the ground set leads to other interesting extremal problems, due to K\"orner, Simonyi and Malvenuto.

We say that two permutations \(\sigma,\pi \in S_n\) {\em collide} if there exists \(i \in [n]\) such that \(|\sigma(i)-\pi(i)| = 1\), i.e. \(i\) is mapped to two consecutive numbers by \(\sigma\) and \(\pi\). We say that a family \(\mathcal{A} \subset S_n\) is {\em pairwise colliding} if any two permutations in \(\mathcal{A}\) collide. Let \(\rho(n)\) denote the maximum possible size of a pairwise colliding family of permutations in \(S_n\); K\"orner and Malvenuto \cite{malvenuto} make the following.
\begin{conjecture}
\label{conj:colliding}
For any \(n \in \mathbb{N}\), \(\rho(n) = {n \choose \lfloor n/2 \rfloor}\).
\end{conjecture}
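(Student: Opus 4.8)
The plan is to translate everything into a statement about a vertex-transitive graph, at which point one half becomes routine and all the content is concentrated in the other half.

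First I would introduce the \emph{collision graph} $G$: its vertex set is $S_n$, and $\sigma$ is joined to $\pi$ whenever $|\sigma(i)-\pi(i)|=1$ for some $i\in[n]$. A pairwise colliding family is precisely a clique of $G$, so $\rho(n)=\omega(G)$. The key structural observation is that $G$ is a Cayley graph: $\sigma\sim\pi$ if and only if $\sigma\pi^{-1}$ lies in the inverse-closed set $D=\{\tau\in S_n:\ |\tau(j)-j|=1\text{ for some }j\}$, so $G=\Cay(S_n,D)$, and in particular $G$ is vertex-transitive under right translation. Note, however, that $D$ is \emph{not} conjugation-invariant — the path $1-2-\cdots-n$ on the values is not vertex-transitive — so $G$ is not a normal Cayley graph, and the representation-theoretic eigenvalue machinery of this paper does not apply to it.

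The upper bound $\rho(n)\le\binom{n}{\lfloor n/2\rfloor}$ then comes essentially for free. For each $p:[n]\to\{0,1\}$ with exactly $\lfloor n/2\rfloor$ zeros, let $\mathcal{T}_p=\{\sigma\in S_n:\ \sigma(i)\text{ even}\iff p(i)=0\}$; this set has $\lfloor n/2\rfloor!\,\lceil n/2\rceil!$ elements, and any two of them agree in parity at every coordinate, hence never collide — so $\mathcal{T}_p$ is an independent set of $G$ of that size. Taking a maximum independent set $I$ and a maximum clique $K$ and averaging $|Ig\cap K|$ over the right translations $g\in S_n$ (each such intersection being at most $1$, since a clique meets an independent set in at most one vertex) gives the classical bound $\alpha(G)\,\omega(G)\le|V(G)|=n!$, and therefore
\[\rho(n)=\omega(G)\le\frac{n!}{\lfloor n/2\rfloor!\,\lceil n/2\rceil!}=\binom{n}{\lfloor n/2\rfloor}.\]

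All the difficulty is in the matching lower bound. Since the $\binom{n}{\lfloor n/2\rfloor}$ classes $\mathcal{T}_p$ partition $S_n$ into independent sets of equal size, a clique of size $\binom{n}{\lfloor n/2\rfloor}$ would have to be a \emph{rainbow} clique: a system of distinct representatives $(\sigma_p)_p$, one from each parity class, that is pairwise colliding. Indexing the classes by the $\lceil n/2\rceil$-set $A$ of positions receiving odd values, the task is to choose $\sigma_A\in\mathcal{T}_A$ for every $A$ so that any two collide — i.e.\ to break the ``agree in parity everywhere'' obstruction by forcing a gap of exactly $\pm1$ at some coordinate. My plan would be to build such a family recursively, exploiting $\binom{n}{\lfloor n/2\rfloor}=2\binom{n-1}{\lfloor (n-1)/2\rfloor}$ for $n$ even (and the corresponding Pascal split when $n$ is odd): from a good rainbow clique in $S_{n-1}$, produce one in $S_n$ by inserting the new value in two carefully chosen ways, arranged so that the two resulting copies are pairwise colliding both internally and across, while each new permutation still lands in its intended parity class.

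The hard part — and the reason the conjecture is open — is precisely this construction. The naive explicit encodings already fail at $n=5$: for instance, taking $\sigma_A$ to be increasing on the odd values and increasing on the even values yields, for $A=\{1,2,3\}$ and $A=\{1,2,4\}$, the representatives $1\,3\,5\,2\,4$ and $1\,3\,2\,5\,4$, whose coordinates differ by $0,0,3,3,0$ and which therefore do not collide. So one needs a genuinely more clever — presumably recursive or Gray-code-type — ordering of the middle layer of the Boolean lattice that is compatible with the collision relation, and I would not expect the eigenvalue methods of this paper to serve as a substitute for such a combinatorial idea.
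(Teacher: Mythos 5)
This statement is labelled a \emph{conjecture} in the paper, and the paper does not prove it: it only gives the upper bound $\rho(n)\le\binom{n}{\lfloor n/2\rfloor}$ via the parity-sequence argument, and reports that the matching lower bound has been verified only for $n\le 9$ (by K\"orner--Malvenuto for $n\le 7$ and by Brik's computer search for $n=8,9$), remaining open for all $n>9$. Your proposal is therefore not, and does not claim to be, a proof of the statement; you should be aware that no proof exists in the paper to compare against.

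Your upper bound is correct and is essentially the paper's argument in graph-theoretic dress: the classes $\mathcal{T}_p$ are exactly the fibres of the parity-sequence map, and since two permutations with the same parity sequence differ by an even amount in every coordinate, they never collide. The paper concludes directly by pigeonhole (a clique meets each fibre at most once, and there are $\binom{n}{\lfloor n/2\rfloor}$ fibres); your route through vertex-transitivity of $\Cay(S_n,D)$ and the clique--coclique bound $\alpha(G)\,\omega(G)\le |V(G)|$ reaches the same number with slightly more machinery. Your observation that $D$ is not conjugation-invariant, so the paper's representation-theoretic eigenvalue methods do not apply, is also correct and is consistent with the authors' remark that new techniques would be needed. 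The genuine gap is the one you name yourself: the lower bound, i.e.\ the construction of a pairwise colliding system of distinct representatives of the parity classes. Your recursive plan based on $\binom{n}{\lfloor n/2\rfloor}=2\binom{n-1}{\lfloor(n-1)/2\rfloor}$ is a reasonable heuristic, and your $n=5$ counterexample correctly shows the naive encoding fails, but no such construction is known in general --- the best known lower bound, cited in the paper, is only $\rho(n)\ge (1.8155\ldots)^n$, which is exponentially far from $\binom{n}{\lfloor n/2\rfloor}\sim 2^n/\sqrt{n}$. So the proposal establishes half of an open conjecture and honestly leaves the other half open, which is the current state of the art.
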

The upper bound \(\rho(n) \leq {n \choose \lfloor n/2 \rfloor}\) can be seen as follows. If \(\sigma \in S_n\), we define the {\em parity-sequence} of \(\sigma\) to be the length-\(n\) sequence \((\epsilon_i)_{i=1}^{n}\) with
\[\epsilon_i = \left\{\begin{array}{rl} 1 & \textrm{if }\sigma(i)\ \textrm{is even;}\\
                       -1 & \textrm{if } \sigma(i)\ \textrm{is odd.}\end{array}\right.\]
There are \({n \choose \lfloor n/2 \rfloor}\) possible parity-sequences. Observe that each permutation in a pairwise colliding family must have a different parity-sequence; this yields the upper bound. Conjecture \ref{conj:colliding} has been verified for \(n \leq 7\) by K\"orner and Malvenuto \cite{malvenuto} (using ingenious combinatorial arguments), and for \(n=8,9\) by Brik \cite{brik} (using highly a non-trivial computer program), but remains open for all \(n >9\). The best known lower bound is \(\rho(n) \geq c^n\), where \(c = 1.8155...\), due to Brightwell, Cohen, Fachini, Fairthorne, K\"orner, Simonyi, and T\'oth \cite{brightwell}.

In \cite{graphdifferent}, K\"orner, Simonyi and Sinaimeri study a broad generalization of the above problem. Let \(G\) be an infinite graph with vertex-set \(\mathbb{N}\); we think of \(G\) as fixed. We say that two permutations \(\sigma,\pi \in S_n\) are {\em \(G\)-different} if there exists some \(i \in [n]\) such that \(\{\sigma(i),\pi(i)\} \in E(G)\), i.e. there is some number which \(\sigma\) and \(\pi\) map to adjacent vertices of \(G\). We say that a family of permutations \(\mathcal{A} \subset S_n\) is {\em pairwise \(G\)-different} if any two permutations \(\sigma\) and \(\pi\) in \(\mathcal{A}\) are \(G\)-different, and we write \(T(n,G)\) for the maximum possible size of a pairwise \(G\)-different family of permutations in \(S_n\).

A natural class of infinite graphs to consider is the class of distance graphs. If \(D \subset \mathbb{N}\), the {\em distance graph} \(G(D)\) on \(\mathbb{N}\) is the graph with vertex-set \(\mathbb{N}\), where we join \(i\) and \(j\) if \(|i-j| \in D\). (Note that the case \(S=\{1\}\) corresponds to colliding permutations.) In \cite{graphdifferent}, K\"orner, Simonyi and Sinaimeri obtain the value of \(T(n,G(\overline{\{q\}}))\) for all integers \(n\) and \(q\), and exhibit sets \(D\) for which \(T(n,G(D))\) has several different growth-rates. For more results and problems in this area, the reader is encouraged to consult \cite{brightwell} and \cite{graphdifferent}.

\subsection*{Erratum, added 5th December 2019}

Recently, Yuval Filmus \cite{filmus-comment} has pointed out that Theorem \ref{thm:bfp} is false for each $t \geq 2$ (there is a hole in the proof). However, it is easy to deduce Theorem \ref{thm:equality} from Theorem 3 in \cite{eff3}, which states the following.

\begin{theorem}
\label{thm:eff3}
For each \(t \in \mathbb{N}\), there exists \(C_t >0\) such that the following holds. Let $n \in \mathbb{N}$ with $n \geq t$, and let \(\mathcal{A} \subset S_n\) with \(|\mathcal{A}| = c(n-t)!\), where $c \geq 0$. Let \(f = 1_{\mathcal{A}} \colon S_n \to \{0,1\}\) denote the characteristic function of \(\mathcal{A}\), so that \(\mathbb{E}[f] = c/(n)_t\). Let $f_t$ denote the orthogonal projection of $f$ onto $V_t$. Let $0 \leq \epsilon \leq 1$. If \(\mathbb{E}[(f-f_t)^2] \leq \epsilon c/(n)_t\), then there exists $\mathcal{C} \subset S_n$ such that $\mathcal{C}$ is a union of \(\round c\) $t$-cosets of \(S_n\), and
\begin{equation}\label{eq:main-bound} |\mathcal{A} \triangle \mathcal{C}| \leq C_t(\epsilon^{1/2} + c/\sqrt{n})|\mathcal{A}|.\end{equation}
Moreover, we have $|c-\round c| \leq C_t(\epsilon^{1/2} + c/\sqrt{n})c.$
\end{theorem}
(Here, if $g:S_n \to \mathbb{R}$, $\mathbb{E}[g]$ denotes the expectation of $g$ with respect to the uniform measure on $S_n$, and if $c \geq 0$, $\round c$ denotes the closest integer to $c$, rounding up if $c+\tfrac{1}{2} \in \mathbb{N}$.)

Here is a sketch of the deduction of Theorem \ref{thm:equality} from the above theorem. Let $\mathcal{A} \subset S_n$ be a $t$-setwise-intersecting family with $|\mathcal{A}| = t!(n-t)!$. Provided $n$ is sufficiently large depending on $t$, as argued above, it follows that $1_{\mathcal{A}} \in U_t \leq V_t$, so we may apply Theorem \ref{thm:eff3} with $\epsilon=0$, concluding that there exists a family $\mathcal{C} \subset S_n$ which is a union of $t!$ $t$-cosets of $S_n$, and such that 
\begin{equation}\label{eq:symm-diff-bound} |\mathcal{A} \triangle \mathcal{C}| \leq O_t(1 /\sqrt{n})(n-t)!.\end{equation}
Without loss of generality, by translating $\mathcal{A}$ on the left and the right by fixed permutations if necessary, we may assume that $\mathcal{C}$ contains all the permutations fixing $[t]$ pointwise. It is easy to check that if $\mathcal{A}$ contains at least one permutation ($\tau$, say) that does not fix $[t]$ as a set, then there are $\Theta_t(1)((n-t)!)$ permutations that fix $[t]$ pointwise and do not agree with $\tau$ on any $t$-set, so that $|\mathcal{C} \setminus \mathcal{A}| \geq \Theta_t(1)((n-t)!)$, contradicting (\ref{eq:symm-diff-bound}) provided $n$ is sufficiently large depending on $t$. 

\subsection*{Acknowledgements}
We thank J\'anos K\"orner for helpful suggestions, and for posing the $t=2$ case of the problem considered here. We thank Ferdinand Ihringer and Karen Meagher for pointing out the second `exceptional' case of equality ($t=2$, $n=5$) in Conjecture \ref{conj:main}.


\begin{thebibliography}{99}
\bibitem{brightwell} Brightwell, G., Cohen, G., Fachini, E., Fairthorne, M., K\"orner, J., Simonyi, G., T\'oth, A., Permutation capacities of families of oriented infinite paths, {\em SIAM Journal on Discrete Mathematics} Volume 24, Issue 2 (2010), 441--456.
\bibitem{brik} Brik, A., The construction of certain maximal size sets of pairwise colliding
permutations, to appear.
\bibitem{cameron} Cameron, P.J., Ku, C.Y., Intersecting Families of Permutations, {\em European Journal of Combinatorics} 24 (2003), 881--890.
\bibitem{dezafrankl} Deza, M., Frankl, P., On the maximum number of permutations with given maximal or minimal distance, {\em Journal of Combinatorial Theory, Series A} 22 (1977), 352--360.
\bibitem{jcta} Ellis, D., Stability for \(t\)-intersecting families of permutations, {\em Journal of Combinatorial Theory, Series A} 118 (2011), 208--227.
\bibitem{eff3} Ellis, D., Filmus, Y and Friedgut, E., Low-degree Boolean functions on $S_n$, with an application to isoperimetry, {\em Forum of Mathematics, Sigma} 5 (2017), e23.
\bibitem{jointpaper} Ellis, D., Friedgut, E., Pilpel, H., Intersecting Families of Permutations, {\em Journal of the American Mathematical Society} 24 (2011), 649--682.
\bibitem{filmus-comment} Filmus, Y., A comment on Intersecting Families of Permutations, preprint, arXiv:1706.10146.
\bibitem{frt} Frame, J. S., Robinson, G. B., Thrall., R. M., The Hook graphs of \(S_n\), {\em Canadian Journal of Mathematics} 6 (1954), 316--325.
\bibitem{friedguttalk} Friedgut, E., Intersecting Families of Permutations, talk given at the Oberwolfach Workshop on Combinatorics, January 2008.
\bibitem{furedi} F\"uredi, Z., Kantor, I., Monti, A., Sinaimeri, B., On reverse-free codes and permutations, {\em SIAM Journal on Discrete Mathematics} Volume 24, Issue 3 (2010), 964--978.
\bibitem{meagher}  Godsil, C., Meagher, K., A new proof of the Erd\H os-Ko-Rado theorem for intersecting families of permutations, {\em European Journal of Combinatorics} Volume 30, Issue 2 (2009), 404--414.
\bibitem{hoffman} Hoffman, A. J., On eigenvalues and colorings of graphs, {\em Graph Theory and its Applications}, 79--91, Academic Press, New York, 1970.
\bibitem{jamesliebeck} James, G., Liebeck, M., Representations and Characters of Groups, CUP, 2001.
\bibitem{JamesKerber} G. James, A. Kerber: The Representation Theory of the Symmetric Group, \emph{Encyclopedia of Mathematics and its Applications} Volume 16, Addison Wesley, 1981.
\bibitem{korner} K\"orner, J., personal communication.
\bibitem{malvenuto} K\"orner, J., Malvenuto, C., Pairwise colliding permutations and the capacity of infinite graphs, {\em SIAM Journal on Discrete Mathematics} Volume 20, Issue 1 (2006), 203--212.
\bibitem{graphdifferent} K\"orner, J., Simonyi, G., Sinaimeri, B., On types of growth for graph-different permutations, {\em Journal of Combinatorial Theory, Series A} 116 (2009), 713--723.
\bibitem{kuwales}  Ku, C.Y., Wales, D.B., The eigenvalues of the derangement graph, {\em Journal of Combinatorial Theory, Series A}, Volume 117, Issue 3 (2010), 289--312.
\bibitem{larose} Larose, B., Malvenuto, C., Stable sets of maximal size in Kneser-type graphs, {\em European Journal of Combinatorics} 25 (2004), 657--673.
\bibitem{renteln} Renteln, P., On the Spectrum of the Derangement Graph, {\em Electronic Journal of Combinatorics} 14 (2007), \#R82.
\bibitem{sagan} Sagan, B.E., {\em The Symmetric Group: Representations, Combinatorial Algorithms and Symmetric Functions}, Springer-Verlag, New-York, 1991.
\bibitem{serre} Serre, J. P., {\em Linear representations of finite groups}, Springer-Verlag, Berlin, 1977. 
\bibitem{wang} Wang, J., Zhang, S.J., An Erd\H{o}s-Ko-Rado-type theorem in Coxeter groups, {\em European Journal of Combinatorics} 29 (2008), 1112--1115.
\end{thebibliography}
\end{document}